\newtheorem{teor}{Theorem}[section]
\newtheorem{defi}{Definition}
\newtheorem{lema}[teor]{Lemma}
\newtheorem{prop}[teor]{Proposition}
\newtheorem{cor}[teor]{Corollary}
\newtheorem{rem}[teor]{Remark}
\newtheorem{ejems}[teor]{Examples}
\begin{document}

\title{Pseudo-Frobenius graded algebras with enough idempotents}
\author{Estefanía Andreu Juan\\ eaj1@um.es
\and Manuel Saorín\\msaorinc@um.es}

\date{}
\maketitle

\begin{center}

Departmento de Matem\'aticas

Universidad de Murcia

Campus de Espinardo, 30100 Murcia

Spain \end{center}

\vspace{1cm}

\begin{center}
Dedicated to Alberto Facchini in his 60th birthday
\end{center}

\vspace{1cm}

\begin{abstract}
We introduce and study the notion of pseudo-Frobenius graded algebra with enough idempotents, showing that it follows the pattern of  the classical concept of pseudo-Frobenius (PF) and Quasi-Frobenius (QF) ring, in particular finite dimensional self-injective algebras,  as studied by Nakayama,  Morita, Faith, Tachikawa,  etc. We show that such an algebra is characterized by the existence of a graded Nakayama form. Moreover, we prove that the pseudo-Frobenius property is preserved and reflected by covering functors, a fact which makes the concept useful in Representation Theory.
\end{abstract}

\vspace{0.4cm}

\noindent \textbf{Keywords}: Pseudo-Frobenius algebra, self-injective algebra, Nakayama automorphism,  Nakayama
form,  covering functor.

\vspace{0.4cm}

\noindent Classification Code: 16Gxx ; Representation theory of
rings and algebras.

\section{Introduction}\oddsidemargin=
3cm \evensidemargin=1.5cm \textheight=24cm \voffset=-1.5cm

In Ring Theory and Representation Theory of Algebras it has been a classical and natural process to abstract the properties of group algebras over a field and define new contexts where many of those properties still hold and to which, as a consequence, results from group algebras are easily transfered. In this vein, the concept of finite dimensional self-injective algebra already appeared in the work of Nakayama (\cite{Nak1}, \cite{Nak2}) and is nowdays a common object of study. On the ring-theoretic side,  Quasi-Frobenius (QF) rings  are the 'field-free' equivalent of finite dimensional self-injective algebras and were intensively studied, among others, by Tachikawa (see \cite{T}). The most important feature of a Quasi-Frobenius ring is that its  (left or right) module category is a Frobenius category, i.e., the classes of injective and projective modules coincide. Going further in this direction, Osofsky \cite{O} introduced a class of rings, later called
 pseudo-Frobenius (PF). Left PF rings are those rings $R$ for which the regular module ${}_RR$ is an injective cogenerator. Using known characterizations of such rings (see \cite{F}[Theorem 24.32]), it is not hard to prove that these are precisely the rings for which the classes of finitely generated projective and finitely cogenerated injective left modules coincide.

When dealing with split basic finite dimensional algebras over a field $K$, one of the most important tools in Representation Theory is that of coverings of quivers with relations and, more generally, covering functors of small $K$-categories. Roughly speaking, from such an original (associative unital) finite dimensional algebra $\Lambda$, which can be interpreted as $K$-category whose objects are the elements of a complete family of primitive idempotents,  one constructs, by using covering techniques, a new small $K$-category or, equivalently, an algebra with enough idempotents $\tilde{\Lambda}$ with a canonical functor $\tilde{\Lambda}\longrightarrow\Lambda$. Alhough one generally loses the unital condition of the algebra in the process, the module category over $\tilde{\Lambda}$ is  much simpler  than the one over $\Lambda$ and the two module categories are related via well-behaved functors. The process has been fundamental to tackle classification of representation-finite algebras (see \cite{Ri}, \cite{G2}, \cite{BG}).

Motivated by the power of coverings to deal with representation-theoretic problems and by our interest in studying the conditions of symmetry, periodicity and Calabi-Yau of finite dimensional mesh algebras (see \cite{AS}),  we introduce and study in this paper the notion of pseudo-Frobenius graded algebra with enough idempotents. We show that such an algebra shares a lot of features of finite dimensional self-injective algebras. In particular, it can be characterized via the existence of a graded Nakayama form and it affords a Nakayama automorphism which is fundamental for its study. What is even more important, we show that, under fairly general hypotheses, the pseudo-Frobenius condition of a graded algebra with enough idempotents is preserved and reflected via covering functors and the associated graded Nakayama form and Nakayama automorphism move rather well from a given algebra to its covering an viceversa. This later fact has been the key point for the results obtained in \cite{AS}.

In this paper the term 'algebra' will mean always an associative algebra over a ground field $K$ and all gradings on such algebra are $H$-gradings, where $H$ is an abelian group with additive notation. Both $K$ and $H$ will be fixed in the rest of the paper. This is  enough for our purposes, although some proofs and results might be adapted to more general situations.

The organization of the paper goes as follows. In section 2 we give the definition and basic properties of a graded algebra with enough idempotents, showing that it can be interpreted as a small graded $K$-category and viceversa. We characterize the graded Jacobson radical of such an algebra and define the concept of weakly basic graded algebra with enough idempotents, which is the analogue of a basic algebra in the ungraded unital setting. In section 3, as way of introduction of the concepts,  we prove Theorem \ref{teor.graded pseudo-Frobenius algebras} which characterizes, in the weakly basic case,  the
 pseudo-Frobenius graded algebras and shows that Quasi-Frobenius graded algebras with enough idempotents are always pseudo-Frobenius, the converse being true in the locally Noetherian case (corollary \ref{cor.Frobenius=simple socle}). The new concept is given via a graded Nakayama form, which comes with an associated Nakayama permutation of a weakly basic set of idempotents in the algebra and with a degree map from this set to $H$. It is shown in Proposition \ref{prop.uniqueness of Nakayama permutation} that this permutation depends only on the algebra and not on the Nakayama form itself, while the analogous fact is not true for the degree map.
In  Proposition 3.7 we give an explicit way of constructing a graded Nakayama form of a split pseudo-Frobenius graded algebra. We end the section by showing that, when  $A$ is a graded pseudo-Frobenius algebra given by a $\mathbb{Z}$-graded quiver with homogenous relations in the standard way,  one can choose a graded Nakayama form for $A$ with constant degree map. In the final section 4, we adapt to the graded situation the theory of coverings, using the recent approach to the topic of \cite{CM} and \cite{Asa2}, which is more general than the original one. The main result of the section (Theorem \ref{teor.lifting PF in split case}) states that, in the weakly basic locally bounded case and when the involved group of graded automorphisms acts freely on objects, the associated covering functor preserves and reflects the pseudo-Frobenius condition. As a consequence (see Corollary \ref{cor.covering of selfinjective algebra}), one get that the universal cover of a split basic finite dimensional self-injective algebra $\Lambda$ is a (trivially graded) pseudo-Frobenius algebra and that any finite dimensional Galois covering of $\Lambda$ is also a self-injective algebra.

\section{Graded algebras with enough idempotents}

\subsection{Definition and basic properties} \label{subsect. gr-algebras enough-idempt}

 Recall that  an associative
algebra $A$ is said to be an  {\bf algebra with enough idempotents},
when there is a  family $(e_i)_{i\in I}$ of nonzero orthogonal
idempotents such that  $\oplus_{i\in I}e_iA=A=\oplus_{i\in I}Ae_i$.
Any such family $(e_i)_{i\in I}$ will  be called a {\bf
distinguished family}. From now on in this paper $A$ is an algebra
with enough idempotents on which  we fix a distinguished family of
orthogonal idempotents.

All considered (left or right) $A$-modules are supposed to be
unital. For a left (resp. right) $A$-module $M$, that means that
$AM=M$ (resp. $MA=M$) or, equivalently, that we have a decomposition $M=\oplus_{i\in I}e_iM$
(resp. $M=\oplus_{i\in I}Me_i$) as $K$-vector spaces. We denote by $A-\text{Mod}$ and
$\text{Mod}-A$ the categories of left and right $A$-modules,
respectively.

The enveloping algebra of $A$ is the algebra $A^e=A\otimes A^{op}$. Here and in the rest of the paper, unadorned tensor products are considered over $K$. The algebra $A^e$ is also an algebra with enough
idempotents and, when $a,b\in A$,  we will denote by $a\otimes b^o$ the
corresponding element of $A^e$. The distinguished family of orthogonal idempotents with
which we will work is the family $(e_i\otimes e_j^o)_{(i,j)\in
I\times I}$. A left $A^e$-module $M$ will be identified with an
$A$-bimodule by putting $axb=(a\otimes b^o)x$, for all $x\in M$ and
$a,b\in A$. Similarly, a right $A^e$-module is identified with an
$A$-bimodule by putting $axb=x(b\otimes a^o)$, for all $x\in M$ and
$a,b\in A$. In this way, we identify the three categories
$A^e-\text{Mod}$, $\text{Mod}-A^e$ and $A-\text{Mod}-A$, where the
last one is the category of unitary $A$-bimodules, which we will
simply name 'bimodules'.

 Let $H$ be an abelian group with additive notation. An {\bf $H$-graded algebra with enough
idempotents} will be an algebra with enough idempotents $A$,
together with an $H$-grading $A=\oplus_{h\in H}A_h$, such that one
can choose a distinguished family of orthogonal idempotents which
are homogeneous of degree $0$. Such a family $(e_i)_{i\in I}$ will
be fixed from now on. We will denote by $A-Gr$ (resp.  $Gr-A$) the
category ($H$-)graded (always unital) left (resp. right) modules,
where the morphisms are the graded homomorphisms of degree $0$. A
{\bf locally finite dimensional left (resp. right) graded
$A$-module} is a graded module $M=\oplus_{h\in H}M_h$ such that, for
each $i\in I$ and each $h\in H$, the vector space $e_iM_h$ (resp.
$M_he_i$) is finite dimensional. Note that  the definition does not
depend on the distinguished family $(e_i)$. We will denote by
$A-lfdgr$ and $lfdgr-A$ the categories of left and right locally
finite dimensional graded modules.

 Given a graded left
$A$-module $M$, we denote by $D(M)$ the subspace of the vector space
$\text{Hom}_K(M,K)$ consisting of the linear forms
$f:M\longrightarrow K$ such that $f(e_iM_h)=0$, for all but finitely
many $(i,h)\in I\times H$. The $K$-vector space $D(M)$ has a
canonical structure of graded right $A$-module given as follows. The
multiplication $D(M)\times A\longrightarrow D(M)$ takes
$(f,a)\rightsquigarrow fa$, where $(fa)(x)=f(ax)$ for all $x\in M$.
Note that then one has $fe_i=0$, for all but finitely many $i\in I$,
and $f=\sum_{i\in I}fe_i$. Therefore $D(M)$ is unital. On the other
hand, if we put $D(M)_h:=\{f\in D(M):$ $f(M_k)=0\text{, for all
$k\in H\setminus\{-h\}$}\}$, we get a decomposition
$D(M)=\oplus_{h\in H}D(M)_h$ which makes $D(M)$ into a graded right
$A$-modules. Note that $D(M)_he_i$ can be identified with
$\text{Hom}_K(e_iM_{-h},K)$, for all $(i,h)\in I\times H$. We will
call $D(M)$ the {\bf dual graded module} of $M$.

Recall that if $M$ is a graded $A$-module and $k\in H$ is any
element, then we get a graded module $M[k]$ having the same
underlying ungraded $A$-module as $M$, but where $M[k]_h=M_{k+h}$
for each $h\in H$. If $M$ and $N$ are graded left $A$-modules, then
$\text{HOM}_{A}(M,N):=\oplus_{h\in H}\text{Hom}_{A-Gr}(M,N[h])$
 has a structure of graded $K$-vector space, where the homogeneous
 component of degree $h$ is
 $\text{HOM}_{A}(M,N)_h:=\text{Hom}_{A-Gr}(M,N[h])$, i.e.,
 $\text{HOM}_{A}(M,N)_h$ consists of the graded homomorphisms $M\longrightarrow N$ of
 degree $h$.
The following is an analogue of  classical results for modules over associative
rings with unit, whose proof can be easily adapted. It is left to
the reader.

\begin{prop} \label{prop.the functor D over algebras with s.i.}
The assignment $M\rightsquigarrow D(M)$ extends to an exact
contravariant $K$-linear functor $D:A-Gr\longrightarrow Gr-A$ (resp.
$D:Gr-A\longrightarrow A-Gr$) satisfying the following properties:

\begin{enumerate}
\item The maps $\sigma_M:M\longrightarrow D^2(M):=(D\circ D)(M)$,
where $\sigma_M(m)(f)=f(m)$ for all $m\in M$ and $f\in D(M)$, are
all injective and give a natural transformation $\sigma
:1_{A-Gr}\longrightarrow D^2:=D\circ D$ (resp. $\sigma
:1_{Gr-A}\longrightarrow D^2:=D\circ D$)

\item  If $M$ is locally finite
dimensional then $\sigma_M$ is an isomorphism

\item The restrictions of $D$ to the subcategories of locally finite
dimensional graded $A$-modules define mutually inverse dualities
$D:A-lfdgr\stackrel{\cong^{op}}{\longleftrightarrow}lfdgr-A:D$.

\item If $M$ and $N$ are a left and a right graded $A$-module, respectively, then there is an isomorphism of
graded $K$-vector spaces

\begin{center}
$\eta_{M,N}:\text{HOM}_{A}(M,D(N))\longrightarrow D(N\otimes_AM)$,
\end{center}
which is natural on both variables.
\end{enumerate}
\end{prop}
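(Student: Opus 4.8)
The plan is to verify each of the four items by reducing to the corresponding classical statement for modules over a unital ring, exploiting the decompositions $M=\oplus_{i\in I}e_iM=\oplus_{h\in H}M_h$ that express the ``enough idempotents'' and the grading, so that $D(M)$ is built component-wise from the ordinary finite-dimensional duals $\mathrm{Hom}_K(e_iM_h,K)$. First I would check that $D$ is a well-defined contravariant $K$-linear functor: given $g\colon M\to N$ in $A\text{-}Gr$, the map $D(g)\colon D(N)\to D(M)$, $f\mapsto f\circ g$, sends a form vanishing off finitely many $e_iN_h$ to one vanishing off finitely many $e_iM_h$ (because $g(e_iM_h)\subseteq e_iN_h$), it is graded of degree $0$, and it is right $A$-linear; functoriality and additivity are immediate. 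Exactness follows because for a short exact sequence $0\to M'\to M\to M''\to 0$ in $A\text{-}Gr$, applying $\mathrm{Hom}_K(-,K)$ is exact and $D$ is the restriction to the subspaces cut out by the finiteness condition, which is compatible with the exact sequence after decomposing into the $e_iM_h$-isotypic pieces.

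Next, for item (1), the maps $\sigma_M(m)(f)=f(m)$ are the usual evaluation maps; I must check $\sigma_M(m)\in D(D(M))$, i.e.\ that $\sigma_M(m)$ vanishes on all but finitely many $D(M)_h e_i\cong\mathrm{Hom}_K(e_iM_{-h},K)$, which holds because $m$ lies in a \emph{finite} sum of the $e_iM_h$; that $\sigma_M$ is left $A$-linear and graded of degree $0$; that it is injective, since if $m\neq 0$ then some component $e_im_h\neq 0$ and one can pick a linear form in $D(M)$ detecting it; and naturality $\sigma_N\circ g=D^2(g)\circ\sigma_M$, which is a direct diagram chase. For item (2), when $M$ is locally finite dimensional each $e_iM_h$ is finite dimensional, so the classical biduality isomorphism $e_iM_h\cong\mathrm{Hom}_K(\mathrm{Hom}_K(e_iM_h,K),K)$ holds in each component, and these assemble to show $\sigma_M$ is surjective (the finiteness condition in the double dual matches the finiteness condition coming from $D(M)$ living in $lfdgr$). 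Item (3) is then formal: $D$ carries $A\text{-}lfdgr$ into $lfdgr\text{-}A$ and back (finite-dimensionality of $D(M)_h e_i\cong\mathrm{Hom}_K(e_iM_{-h},K)$), and $\sigma$ together with its right-module analogue $\sigma'$ gives the natural isomorphisms $1\cong D^2$ on both sides, which is exactly the statement that the two restrictions are mutually inverse dualities.

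For item (4) I would define $\eta_{M,N}$ explicitly: a graded homomorphism $\varphi\colon M\to D(N)$ of degree $h$ is sent to the linear form on $N\otimes_A M$ given by $n\otimes m\mapsto \varphi(m)(n)$. I must check this is well defined on the tensor product over $A$ (it respects the relation $na\otimes m = n\otimes am$ because $\varphi$ is $A$-linear and $D(N)$ carries the right action $(\varphi(am))(n)=\varphi(am)(n)$ versus $(\varphi(m)a)(n)=\varphi(m)(an)$ — here one uses that $\varphi(am)=\varphi(m)\cdot a$ in $D(N)$ since $\varphi$ is a module map, together with the definition of the $A$-action on $D(N)$), that the resulting form lies in $D(N\otimes_A M)$ (the finiteness condition transfers because $N\otimes_A M$ decomposes via the idempotents and $\varphi$ has the requisite finiteness built in), and that $\eta_{M,N}$ is a graded $K$-linear bijection with inverse sending $\psi\in D(N\otimes_A M)$ to $m\mapsto (n\mapsto\psi(n\otimes m))$. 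Gradedness is a bookkeeping check on degrees, and naturality in $M$ and $N$ is a routine diagram chase. The main obstacle I anticipate is purely technical rather than conceptual: carefully tracking the finiteness-of-support condition through the tensor product and the double dual so that ``$f$ vanishes on all but finitely many $e_iM_h$'' is preserved at every stage — this is the one place where the enough-idempotents (non-unital) setting genuinely differs from the classical unital case and where one cannot simply cite the standard proof verbatim.
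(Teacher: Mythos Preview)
Your proposal is correct and is precisely the adaptation the paper has in mind: the paper itself gives no proof, stating only that the result ``is an analogue of classical results for modules over associative rings with unit, whose proof can be easily adapted'' and leaving it to the reader. One small slip to fix when you write it out in full: in item (4), since $N$ is a \emph{right} $A$-module, $D(N)$ is a \emph{left} $A$-module with $(a\cdot f)(n)=f(na)$, so the relevant identity is $\varphi(am)=a\cdot\varphi(m)$ (not $\varphi(m)\cdot a$), which then gives $\varphi(am)(n)=\varphi(m)(na)$ and makes $n\otimes m\mapsto\varphi(m)(n)$ well defined on $N\otimes_A M$.
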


When $A=\oplus_{h\in H}A_h$ and $B=\oplus_{h\in H}B_h$ are graded
algebras with enough idempotents, the tensor algebra $A\otimes B$
inherits a structure of graded $H$-algebra, where $(A\otimes
B)_h=\oplus_{s+t=h}A_s\otimes B_t$. In particular, this applies to
the enveloping algebra $A^e$ and, as in the ungraded case, we will
identify the categories $A^e-Gr$ (resp. $Gr-A^e$) and $A-Gr-A$ of
graded left (resp. right) $A^e$-modules and graded $A$-bimodules.

\begin{rem} \label{rem.dual of bimodules and modules}
If $M$ is a graded $A$-bimodule and  we denote  by $D(_AM)$,
$D(M_A)$ and $D(_AM_A)$, respectively, the duals of $M$ as a left
module, right module or bimodule, then $D(_AM_A)=D(_AM)\cap D(M_A)$
and, in general, $D(_AM)$ and $D(M_A)$ need not be the same vector
subspace of $\text{Hom}_K(M,K)$. They are equal if the following two
properties hold:

\begin{enumerate}
\item For each  $(i,h)\in I\times H$, there are only finitely many $j\in I$ such
that $e_iM_he_j\neq 0$ \item For each $(i,h)\in I\times H$, there
are only finitely many $j\in I$ such that $e_jM_he_i\neq 0$.
\end{enumerate}
\end{rem}

\begin{rem} \label{rem.trivially graded algebra}
 When $H=0$, we have $A-Gr=A-\text{Mod}$ and $D(M)=\{f:M\longrightarrow K:$ $f(e_iM)=0\text{, for almost all }i\in
I\}$.
\end{rem}

\begin{defi} \label{defi.locally f.d. graded algebra}
Let $A=\oplus_{h\in H}A_h$ be a graded algebra with enough
idempotents. It will be called {\bf locally finite dimensional} when
the regular bimodule $_AA_A$ is locally finite dimensional, i.e.,
when $e_iA_he_j$ is finite dimensional, for all $(i,j,h)\in I\times
I\times H$. Such a graded algebra $A$ will be called {\bf  graded
locally bounded} when the following two conditions hold:

\begin{enumerate}
\item For each $(i,h)\in I\times H$, the set $I^{(i,h)}=\{j\in I:$ $e_iA_he_j\neq
0\}$ is finite
\item For each $(i,h)\in I\times H$, the set $I_{(i,h)}=\{j\in I:$ $e_jA_he_i\neq
0\}$ is finite.
\end{enumerate}
\end{defi}

\begin{rem} \label{rem.the concepts for H=0}
For $H=0$, the just defined concepts are the familiar ones of
locally finite dimensional and locally bounded, introduced in the
language of $K$-categories by Gabriel and collaborators (see, e.g.,
\cite{BG}).
\end{rem}

\subsection{Graded algebras with enough idempotents versus graded $K$-categories}
\label{sec:Graded algebras vs graded categories}

In this subsection we remind the reader that graded algebras with
enough idempotents can be looked at as small graded $K$-categories,
and viceversa.

A  category $\mathcal{C}$  is a $K$-category if $\mathcal{C}(X,Y)$
is a $K$-vector space, for all objects $X,Y$, and the composition
map $\mathcal{C}(Y,Z)\times \mathcal{C}(X,Y)\rightarrow
\mathcal{C}(X,Z)$ is $K$-bilinear, for all
$X,Y,Z\in\text{Ob}(\mathcal{C})$. If now $H$ is a fixed additive
abelian group, then $\mathcal{C}$ is an \emph{$(H-)$ graded
$K$-category} if $\mathcal{C}(X,Y)= \oplus_{h\in
H}\mathcal{C}_h(X,Y)$ is a graded $K$-vector space, for all
$X,Y\in Obj(\mathcal{C})$, and  the composition map restricts to a
($K$-bilinear) map

\begin{center}
  $\mathcal{C}_h(Y,Z)\times \mathcal{C}_k(X,Y)\rightarrow \mathcal{C}_{h+k}(X,Z)$
\end{center}

\noindent for any $h,k\in H$. There is an obvious definition of
\emph{graded functor (of degree zero)} between graded $K$-categories
whose formulation we leave to the reader.

The prototypical example of graded $K$-category is $(K,H)-GR=K-GR$.
Its objects are the $H$-graded $K$-vector spaces and we define
$\text{Hom}_{K-GR}(V,W):=HOM_K(V,W)=\oplus_{h\in H}\text{Hom}_{K-Gr}(V,W[h])$
(see the paragraph preceding Proposition \ref{prop.the functor D over algebras with s.i.}).

If $A=\oplus_{h\in H}A_h$ is a graded algebra with enough
idempotents, on which we fix a distinguished family $(e_i)_{i\in I}$
of orthogonal idempotents of degree zero, then we can look at it as
a small graded $K$-category. Indeed we put $\text{Ob}(A)=I$,
$A(i,j)=e_iAe_j$ and take as composition map $e_jAe_k\times
e_iAe_j\longrightarrow e_iAe_k$ the antimultiplication: $b\circ
a:=ab$.

Conversely, if $\mathcal{C}$ is a small graded $K$-category then
$R=\oplus_{X,Y\in\text{Ob}(\mathcal{C})}\mathcal{C}(X,Y)$ is a graded $K$-algebra
with enough idempotents, where the family of identity maps
$(1_X)_{X\in\text{Ob}(\mathcal{C})}$ is a distinguished family of
homogeneous elements of degree zero. We will call $R$ the
\emph{functor algebra associated to $\mathcal{C}$}. Let
$GrFun(\mathcal{C}, K-GR)$ denote the category of graded $K$-linear
covariant functors, with morphisms the $K$-linear natural
transformations. To each object $F$ in this category, we canonically
associate a graded left $R$-module $\mathcal{M}(F)$ as follows. The
underlying graded $K$-vector space is
$\mathcal{M}(F)=\oplus_{C\in\text{Ob}(\mathcal{C})}F(C)$. If $f\in
1_YR1_X=\mathcal{C}(X,Y)$ and $z\in F(Z)$, then we define $f\cdot
z=\delta_{XZ}F(f)(x)$, where $\delta_{XZ}$ is the Kronecker symbol.
Note that if $x\in F(X)$ then $f\cdot x$ is an element of $F(Y)$, and if $f$ and $x$ are
homogeneous elements, then $f\cdot x$ is homogeneous of degree
$\text{deg}(f)+\text{deg}(x)$.

Conversely, given a graded left $R$-module $M$, we can associate to
it a graded functor $F_M:\mathcal{C}\longrightarrow K-GR$ as
follows. We define $F_M(X)=1_XM$, for each
$X\in\text{Ob}(\mathcal{C})$, and if $f\in\mathcal{C}(X,Y)=1_YR1_X$
is any morphism, then $F_M(f):F_M(X)\longrightarrow F_M(Y)$ maps
$x\rightsquigarrow fx$.

Given an object $X$ of the graded $K$-category $\mathcal{C}$, the
associated \emph{representable functor} is the functor
$\text{Hom}_\mathcal{C}(X,-):\mathcal{C}\longrightarrow K-GR$ which
takes $Y\rightsquigarrow\mathcal{C}(X,Y)$, for each
$Y\in\text{Ob}(\mathcal{C})$. With an easy adaptation of the proof
in the  ungraded case (see, e.g., \cite{G0}[Proposition II.2]), we
get:

\begin{prop}\label{prop.algebra de funtores de Gabriel}
Let $\mathcal{C}$ be a small ($H$-)graded $K$- category
 and let $R$ be its associated functor algebra. Then the assignments
 $F\rightsquigarrow\mathcal{M}(F)$ and $M\rightsquigarrow F_M$
 extend to mutually quasi-inverse
 equivalences of categories
 $\text{GrFun}(\mathcal{C}^{op},K-Gr)\stackrel{\cong}{\longleftrightarrow}R-Gr$.
 These equivalences restrict to mutually quasi-inverse equivalences
 $\text{GrFun}(\mathcal{C}^{op},K-lfdGR)\stackrel{\cong}{\longleftrightarrow}R-lfdgr$,
 where $K-lfdGR$ denotes the full graded subcategory of $K-GR$
 consisting of the locally finite dimensional graded $K$-vector
 spaces.

 These equivalences identify the finitely generated projective
 $R$-modules with the direct summands  of representable functors.
\end{prop}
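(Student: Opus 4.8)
The plan is to prove something slightly stronger than the asserted quasi-inverse equivalence. Once $\mathcal{M}(-)$ and $F_{(-)}$ are extended to functors in the evident way, I would check that their composites are equal to the identity functors of $R-Gr$ and of $\text{GrFun}(\mathcal{C}^{op},K-Gr)$, so that these two functors are mutually inverse, in particular a quasi-inverse pair of equivalences. The restriction to locally finite dimensional objects and the description of the finitely generated projectives then follow by inspection. The whole argument is the graded transcription of the ungraded one of \cite{G0}[Proposition II.2]; the only points needing genuine care are the grading bookkeeping and the handedness convention built into $R$.

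First I would make the two functors precise and check they are well defined. For a graded functor $F$ (on $\mathcal{C}^{op}$, with values in $K-GR$) one must see that $\mathcal{M}(F)=\oplus_C F(C)$, equipped with the grading $\mathcal{M}(F)_h=\oplus_C F(C)_h$ and with the $R$-action of the statement, is a \emph{unital} graded left $R$-module: it is unital because the idempotent $1_X$ acts on $\oplus_C F(C)$ as $F(1_X)=\mathrm{id}$ on the summand $F(X)$ and as $0$ on the others, whence $\mathcal{M}(F)=\oplus_X 1_X\mathcal{M}(F)$; and the $R$-action respects degrees precisely because $F$ is a \emph{graded} functor, so that the image under $F$ of a morphism of degree $k$ raises degrees by $k$. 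Dually, for $M\in R-Gr$ the functor $F_M$ does take values in $K-GR$, since $F_M(X)=1_XM$ is graded and $F_M(f)$ is multiplication by the homogeneous element $f$. On morphisms, a degree-zero natural transformation $\theta\colon F\to F'$ gives $\mathcal{M}(\theta)=\oplus_C\theta_C$, which is degree zero and $R$-linear, $R$-linearity on the elements $f\in 1_YR1_X=\mathcal{C}(X,Y)$ being exactly the naturality square of $\theta$ at $f$; and a degree-zero $R$-linear $\phi\colon M\to N$ restricts, by $R$-linearity, to maps $1_XM\to 1_XN$ that assemble into $F_\phi\colon F_M\to F_N$. Functoriality, additivity and $K$-linearity of both $\mathcal{M}(-)$ and $F_{(-)}$ are then immediate from these componentwise descriptions.

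Next I would compute the two composites. By the previous step $F_{\mathcal{M}(F)}(X)=1_X\mathcal{M}(F)=F(X)$, while the action of $f\in 1_YR1_X$ on $\mathcal{M}(F)$ is, by the very definition of $\mathcal{M}(F)$, the map $F(f)$; hence $F_{\mathcal{M}(F)}=F$, and likewise $F_{\mathcal{M}(\theta)}=\theta$. Conversely, for $M\in R-Gr$ the decomposition $M=\oplus_X 1_XM$ (valid because $M$ is unital and $(1_X)_{X\in\text{Ob}(\mathcal{C})}$ is the distinguished family of $R$) gives $\mathcal{M}(F_M)=\oplus_X F_M(X)=\oplus_X 1_XM=M$, with the same $R$-action since $F_M(f)$ is multiplication by $f$, and likewise $\mathcal{M}(F_\phi)=\phi$. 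This shows that $\mathcal{M}(-)$ and $F_{(-)}$ are mutually inverse. For the locally finite dimensional refinement I would simply observe that $M$ lies in $R-lfdgr$ iff $(1_XM)_h=F_M(X)_h$ is finite dimensional for every $X\in\text{Ob}(\mathcal{C})$ and every $h\in H$, i.e. iff $F_M$ takes values in $K-lfdGR$, and symmetrically for $\mathcal{M}(-)$; so the equivalence restricts as claimed.

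Finally, for the statement on finitely generated projectives I would use that the cyclic graded modules $R1_X[h]$, with $X\in\text{Ob}(\mathcal{C})$ and $h\in H$, are projective, being direct summands of $R=\oplus_X R1_X$, and that a graded $R$-module is finitely generated projective exactly when it is a direct summand of a finite direct sum of such modules. Under $F_{(-)}$ the module $R1_X$ goes to the representable functor attached to $X$, since $F_{R1_X}(Y)=1_YR1_X=\mathcal{C}(X,Y)$ with the tautological action, and $R1_X[h]$ to its shift; being an additive equivalence, $F_{(-)}$ therefore identifies the finitely generated projective $R$-modules with the direct summands of (shifts of) representable functors, as asserted. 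I do not expect a deep obstacle anywhere: the one thing that must be watched is the grading bookkeeping (that $\mathcal{M}(F)$ really carries a compatible $H$-grading and that morphisms of graded functors match degree-zero homomorphisms of graded modules), together with the handedness fixed in passing from $\mathcal{C}$ to $R$ by the antimultiplication $b\circ a:=ab$, which is exactly what makes left $R$-modules correspond to functors on $\mathcal{C}^{op}$ rather than on $\mathcal{C}$.
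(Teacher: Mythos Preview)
Your proposal is correct and is precisely the adaptation of Gabriel's ungraded argument that the paper invokes; the paper itself gives no proof beyond the reference to \cite{G0}[Proposition II.2], so you have simply written out what the paper leaves to the reader. One small correction to your closing remark: with the paper's convention $1_YR1_X=\mathcal{C}(X,Y)$ and its own definitions of $\mathcal{M}(F)$ and $F_M$, left $R$-modules correspond to \emph{covariant} functors on $\mathcal{C}$ (as your computation $F_{R1_X}=\mathrm{Hom}_{\mathcal{C}}(X,-)$ and the paper's definition of $F_M\colon\mathcal{C}\to K\text{-}GR$ both confirm), so the $\mathcal{C}^{op}$ in the statement is a slip in the paper rather than a consequence of the antimultiplication convention.
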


Due to the contents of the paper, we will freely move from the
language of graded algebras with enough idempotents to that of small
graded $K$-categories and viceversa. In particular, given graded
algebras with enough idempotents $A$ and $B$, we will say that
$F:A\longrightarrow B$ is a graded functor if it so when we
interpret $A$ and $B$ as small graded $K$-categories.

\subsection{Weakly basic graded algebras with enough idempotents}

Before introducing the main concept of this subsection, we give the following useful result. Recall that, when $B=\oplus_{h\in H}B_h$ is a graded unital algebra, the intersection of its maximal graded left ideals coincides with the intersection of its maximal graded right ideals (see \cite{NVO}[Lemma I.7.4]). We denote by $J^{gr}(B)$ this ideal.

\begin{prop} \label{prop.graded Jacobson radical}
Let $A=\oplus_{h\in H}A_h$ be a graded algebra with enough idempotents, let $(e_i)_{i\in I}$ be a distinguished family of primitive idempotents and, for each finite subset $F\subseteq I$,  let us put $e_F=\sum_{i\in F}e_i$.
For a graded vector subspace $J=\oplus_{i,j}e_iJe_j$ be  of $A$, the following assertions are equivalent:

\begin{enumerate}
\item $J$ is the intersection of all maximal graded left ideals of $A$.
\item $J$ is the intersection of all maximal graded right ideals of $A$.
\item A homogeneous element $x\in e_iA_he_j$ is in $J$ if, and only if, the element $e_j-yx$ is invertible in $e_jA_0e_j$, for each $y\in e_jA_{-h}e_i$.
\item A homogeneous element $x\in e_iA_he_j$ is in $J$ if, and only if, the element $e_i-xy$ is invertible in $e_iA_0e_i$, for each $y\in e_jA_{-h}e_i$.
\item $J=\bigcup_{F\subseteq I, F\text{ finite}}J^{gr}(e_FAe_F)$.
\end{enumerate}
\end{prop}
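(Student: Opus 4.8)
The plan is to establish the cycle of implications $(1)\Leftrightarrow(2)$, then $(1)\Rightarrow(3)$, $(3)\Leftrightarrow(4)$, $(3)\Rightarrow(5)$ and finally $(5)\Rightarrow(1)$, reducing everything as far as possible to the classical unital theory applied to the "truncated" unital algebras $e_FAe_F$. The key observation to exploit throughout is that $A=\varinjlim_F e_FAe_F$ (a directed union over finite $F\subseteq I$), that each $e_FAe_F$ is a graded unital algebra with identity $e_F$, and that $e_iAe_j=e_i(e_FAe_F)e_j$ whenever $i,j\in F$; moreover a homogeneous element $u\in e_iA_0e_i$ is invertible in the monoid $e_iA_0e_i$ if and only if it is invertible in $e_FA_0e_F$ for some (equivalently any) finite $F\ni i$, since an inverse is necessarily supported on $e_iAe_i$. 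This last fact makes conditions (3) and (4) intrinsically "local" and is what glues the truncated pictures together.

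First I would treat $(1)\Leftrightarrow(2)$: given the hypothesis that $J=\oplus_{i,j}e_iJe_j$ is a graded subspace already written compatibly with the idempotents, the standard argument that the radical is left-right symmetric goes through once one knows the quasi-regularity characterization; so in fact I would prove (1), (2), (3), (4) are all equivalent to a single internal "quasi-regularity" condition and deduce the left-right symmetry as a by-product, exactly as in \cite{NVO}[Lemma I.7.4] but carried out corner by corner. Concretely, for $(1)\Rightarrow(3)$ I would argue that if $x\in e_iA_he_j\cap J$ then for any $y\in e_jA_{-h}e_i$ the element $yx\in e_jA_0e_j$ lies in $J\cap e_jA_0e_j=J^{gr}(e_FAe_F)\cap e_jA_0e_j$ for a large enough finite $F$ (using the directed-union structure of $J$), hence $e_j-yx$ is invertible in $e_jA_0e_j$ by the classical description of the graded radical of the unital algebra $e_FAe_F$; conversely, if $x\notin J$ then $x$ lies outside some maximal graded left ideal, one pulls this down to $e_FAe_F$ for suitable finite $F$ containing $i,j$ and the support of a suitable witness, and applies the unital theory there to produce $y\in e_jA_{-h}e_i$ with $e_j-yx$ non-invertible. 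The implication $(3)\Leftrightarrow(4)$ is the usual symmetric-radical fact ($1-ab$ invertible $\Leftrightarrow$ $1-ba$ invertible), transported to the corners $e_iA_0e_i$ and $e_jA_0e_j$; here one must be slightly careful that "invertible" is meant in these corner monoids, which by the local remark above is harmless.

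For $(3)\Rightarrow(5)$ and $(5)\Rightarrow(1)$ I would use that condition (3), being stated purely in terms of the homogeneous corners $e_iA_he_j$ and $e_jA_{-h}e_i$, is detected inside any single $e_FAe_F$ with $i,j\in F$: it says precisely that $e_iJe_j=e_iJ^{gr}(e_FAe_F)e_j$ for every finite $F\supseteq\{i,j\}$, whence $J=\bigcup_F J^{gr}(e_FAe_F)$ after checking this union is directed (which follows since $F\subseteq F'$ gives $J^{gr}(e_FAe_F)=e_FJ^{gr}(e_{F'}Ae_{F'})e_F$, a standard corner-radical identity for unital algebras). Finally, to get $(5)\Rightarrow(1)$ I would show that $J=\bigcup_F J^{gr}(e_FAe_F)$ is a two-sided ideal each of whose elements is "graded-quasi-regular" in the sense of (3) (immediate from the definition of the $J^{gr}(e_FAe_F)$), and that any graded left ideal strictly containing $J$ contains an element $x$ witnessed as non-quasi-regular in some $e_FAe_F$, hence contains $e_i$ for some $i$, so is not contained in any maximal graded left ideal; combined with Zorn's lemma to produce maximal graded left ideals above $J$, this identifies $J$ with their intersection.

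The main obstacle I anticipate is the bookkeeping in the "conversely" directions: passing from a non-membership statement in the big algebra $A$ (living among all maximal graded left ideals) down to a single finite truncation $e_FAe_F$ requires choosing $F$ large enough to contain not just $i,j$ but also the supports of all the finitely many elements needed to witness non-quasi-regularity, and then checking that a maximal graded left ideal of $e_FAe_F$ avoiding $x$ extends to a maximal graded left ideal of $A$ still avoiding $x$ — this extension step is where the enough-idempotents hypothesis and the unital character of each $e_FAe_F$ must be used carefully, and it is the place where an over-hasty argument would break down.
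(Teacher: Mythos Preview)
Your overall strategy---reduce to the unital corners $e_FAe_F$ and transfer the classical graded-radical theory via an extension lemma for maximal graded left ideals---is the same as the paper's, and you correctly flag that extension step as the crux. But the logical order is tangled, and one piece is wrong as stated.

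The circularity: in your forward direction of $(1)\Rightarrow(3)$ you invoke $J\cap e_jA_0e_j=J^{gr}(e_FAe_F)\cap e_jA_0e_j$, but the inclusion $J(1)\cap e_FAe_F\subseteq J^{gr}(e_FAe_F)$ is precisely the hard half of $(1)\Leftrightarrow(5)$, so it cannot be used before (5) is available. The paper avoids this by proving $e_jJ(1)_0e_j\subseteq J(e_jA_0e_j)$ directly and first: given a maximal ideal $\mathbf{m}_j$ of $e_jA_0e_j$, one forms $\mathbf{a}=A\mathbf{m}_j\oplus(\bigoplus_{k\neq j}Ae_k)$, enlarges it by Zorn to a maximal graded left ideal $\mathbf{m}$ of $A$, and checks $\mathbf{m}\cap e_jA_0e_j=\mathbf{m}_j$. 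This is exactly the extension construction you anticipate, but carried out for the single corner $e_jA_0e_j$ and placed \emph{before} the equivalence with (3), not after. For the converse, your ``pull down to $e_FAe_F$'' is a detour: the paper stays in $A$ and writes, from $x\notin\mathbf{m}$, the equation $e_j=z+yx$ with $z\in\mathbf{m}\cap e_jA_0e_j$ and $y\in e_jA_{-h}e_i$; if $z=e_j-yx$ had a left inverse $w$ in $e_jA_0e_j$, then $e_j=wz\in\mathbf{m}$ and $x=xe_j\in\mathbf{m}$, a contradiction. No truncation and no ``unital theory'' beyond this one line is needed.

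The genuine error is in your $(5)\Rightarrow(1)$: the sentence ``hence contains $e_i$ for some $i$, so is not contained in any maximal graded left ideal'' is false---a proper graded left ideal of $A$ may contain many of the $e_i$ and still sit inside a maximal one. In fact $(5)\Rightarrow(1)$ is redundant once $J(1)=J(3)$ and your correct local observation $J(3)=J(5)$ are in place; the paper does the analogous step by applying the already-proved implication $J(1)=J(3)$ to the unital algebra $e_FAe_F$. Your $(3)\Leftrightarrow(4)$ via the $1-ab\leftrightarrow 1-ba$ trick is fine; the paper gets it instead from left--right symmetry of the whole argument.
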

\begin{proof}
We denote by $J(k)$ the graded vector subspace of $A$ given by assertion $k$, for $k=1,2,3,4,5$. Note that $J(k)$ is a graded left (resp. right) ideal of $A$ for $k=1,3$ (resp. $k=2,4$) and it is a two-sided ideal for $k=5$ by the previous comments. We shall prove that $J(1)=J(3)=J(5)$ and, by left-right symmetry, we will get also that $J(2)=J(4)=J(5)$.

\emph{First step: J(1)=J(3). } Put $J(1)=J$ and $J(3)=J'$ for simplicity.
 Note that if $\mathbf{m}_j$ is a maximal ideal of $e_jA_0e_j$, then $\mathbf{a}=A\mathbf{m}_j\oplus (\oplus_{i\neq j}Ae_i)$ is a graded left ideal of $A$ which does not contain $e_j$. We choose a maximal graded left ideal $\mathbf{m}$ of $A$ such that $\mathbf{a}\subseteq\mathbf{m}$. We then get inclusions $\mathbf{m}_j\subseteq\mathbf{a}\cap e_jA_0e_j\subseteq\mathbf{m}\cap e_jA_0e_j\subsetneq e_jA_0e_j$, where the last inclusion is strict since $e_j\not\in\mathbf{m}$. The maximality of $\mathbf{m}_j$ then gives that $\mathbf{m}_j=\mathbf{m}\cap e_jA_0e_j$. We then get that $e_jJ_0e_j=J\cap e_jA_0e_j$ is contained in the Jacobson radical $J(e_jA_0e_j)$ of $e_jA_0e_j$. Suppose that $x\in J\cap e_iA_he_j$ and  $y\in e_jA_{-h}e_i$. Then $yx\in e_jJ_0e_j\subseteq J(e_jA_0e_j)$. This implies that $e_j-yx$ is invertible in $e_jA_0e_j$, and so   $J\subseteq J'$.

Conversely,  take $x\in e_iJ'_he_j=J'\cap e_iA_he_j$ and suppose that $x\not\in J$. We then have a maximal graded left ideal $\mathbf{m}$ of $A$ such that $x\not\in\mathbf{m}$, so that $\mathbf{m}+Ax=A$. In particular, we get $e_j=z+yx$, for some $z\in\mathbf{m}\cap e_jA_0e_j$ and some $y\in e_jA_{-h}e_i$. It follows that $z$ is invertible in $e_jA_0e_j$, which in turn implies that $e_j\in\mathbf{m}$. But then $x=xe_j\in\mathbf{m}$, which is a contradiction.

\vspace*{0.3cm}

\emph{Second step: J(1)=J(5).} Put $J=J(1)$ and $J''=J(5)$. Let $\mathbf{m}_F$ be a maximal graded left ideal of $e_FAe_F$. Then $\mathbf{a}=(\oplus_{i\in I\setminus F}Ae_i)\oplus A\mathbf{m}_F$ is a graded left ideal of $A$ which does not contain $e_F$. Arguing as in the first step, we get a maximal graded left ideal $\mathbf{m}$ of $A$ such that $\mathbf{m}\cap e_FAe_F=\mathbf{m}_F$. We then get $J\cap e_FAe_F\subseteq J^{gr}(e_FAe_F)$, and so $J=\bigcup_{F\subseteq I, F\textbf{ finite}}J\cap e_FAe_F\subseteq\bigcup_{F\subseteq I, F\textbf{ finite}}J^{gr}(e_FAe_F)=J''$.

Note that if $F\subset F'$ are finite subsets of $I$, then, by the properties of graded unital algebras and the fact that $e_FAe_F=e_F(e_{F'}Ae_{F'})e_F$, we have that $J^{gr}(e_FAe_F)=e_FJ^{gr}(e_{F'}Ae_{F'})e_F\subset J^{gr}(e_{F'}Ae_{F'})$.
Let now $x\in e_iJ_h''e_j$ be a homogeneous element in $J''$. We can then choose a finite subset $F\subseteq I$ such that $i,j\in F$ and $x\in J^{gr}(e_FAe_F)$. By replacing $A$ and $J$ by $e_FAe_F$ and $J^{gr}(e_FAe_F)$, respectively, the first step gives that,  for each $y\in e_jA_{-h}e_i$, the element $e_j-yx$ is invertible in $e_j(e_FAe_F)e_j=e_jAe_j$. That is, we have that $x\in J(3)=J$, so that the inclusion $J''\subseteq J$ also holds.
\end{proof}

The graded two-sided ideal $J$ given by last proposition will be called the \emph{graded Jacobson radical} of $A$ and will be denoted by $J^{gr}(A)$.

\begin{defi} \label{def.basic split graded algebra}
A locally finite dimensional graded algebra with enough idempotents
$A=\oplus_{h\in H}A_h$ will be called \emph{weakly basic} when it
has a distinguished family $(e_i)_{i\in I}$ of orthogonal
homogeneous idempotents of degree $0$ such that:

\begin{enumerate}

\item   $e_iA_0e_i$ is a local algebra, for each $i\in I$ \item
$e_iAe_j$ is contained in the graded Jacobson radical
$J^{gr}(A)$, for all $i,j\in I$, $i\neq j$.
\end{enumerate}
It will be called \emph{basic} when, in addition,
$e_iA_he_i\subseteq J^{gr}(A)$, for all $i\in I$ and $h\in
H\setminus\{0\}$.

We will use also the term '(weakly) basic' to denote any
distinguished family $(e_i)_{i\in I}$ of ortho\-go\-nal idempotents
satisfying the above conditions.

A weakly basic graded algebra with enough idempotents will be called
\emph{split} when  $e_iA_0e_i/e_iJ(A_0)e_i$ $\cong K$, for each $i\in
I$.

\end{defi}

\begin{prop} \label{prop.basic graded algebra}
Let $A=\oplus_{h\in H}A_h$ be a weakly basic graded algebra with enough idempotents and let $(e_i)$ be a
weakly basic distinguished family of orthogonal idempotents. The
following assertions hold:

\begin{enumerate}
\item $J^{gr}(A)_0$ is the Jacobson radical of $A_0$. \item Each indecomposable finitely generated projective graded left $A$-module
is isomorphic to $Ae_i[h]$, for some $(i,h)\in I\times H$. Moreover,
if  $Ae_i[h]$ and $Ae_j[k]$ are isomorphism in $A-Gr$, then $i=j$
and, in case $A$ is basic, also $h=k$.

\item Each finitely generated
projective graded left $A$-module is a finite direct sum of graded
modules of the form $Ae_i[h]$, with $(i,h)\in I\times H$

\item Each finitely generated  graded left $A$-module has
a projective cover in the category $A-Gr$ \item Each finitely
generated projective graded left $A$-module is the projective cover
of a finite direct sum of graded-simple modules (=simple objects of
the category $A-Gr$).
\end{enumerate}
Moreover, the left-right symmetric versions of these assertions also
hold.
\end{prop}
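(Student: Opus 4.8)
The plan is to prove all five assertions together, reducing everywhere to the unital graded algebras $e_FAe_F$ ($F\subseteq I$ finite) via Proposition~\ref{prop.graded Jacobson radical}(5) and to the quotient $\bar A:=A/J^{gr}(A)$. For (1): a homogeneous $x\in e_iA_0e_j$ lies in $J^{gr}(A)$ iff $e_j-yx$ is invertible in $e_jA_0e_j$ for every $y\in e_jA_0e_i$, by Proposition~\ref{prop.graded Jacobson radical}(3) with $h=0$; applying the $H=0$ instance of that proposition to the trivially graded algebra with enough idempotents $A_0$ (with the same distinguished family) shows this is exactly the condition for $x\in J(A_0)$, so $J^{gr}(A)_0=J(A_0)$.

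The rest rests on two structural facts. First, $\mathrm{End}_{A-Gr}(Ae_i[h])=(e_iAe_i)_0=e_iA_0e_i$ is local for every $(i,h)$, so each $Ae_i[h]$ is indecomposable with local graded endomorphism ring. Second, condition (2) of weak basicness gives $e_i\bar Ae_j=0$ for $i\neq j$, and each block $\Delta_i:=e_i\bar Ae_i$ is a graded division ring: given $0\neq\bar x$ homogeneous of degree $h$ in $\Delta_i$, lift it to $x\in e_iA_he_i\setminus J^{gr}(A)$, use Proposition~\ref{prop.graded Jacobson radical}(3) to get $y\in e_iA_{-h}e_i$ with $e_i-yx$ non-invertible in the local ring $e_iA_0e_i$, hence in its radical, so $\overline{yx}$ is a unit of the residue division ring $e_iA_0e_i/J(e_iA_0e_i)$ and $\bar x$ has a left inverse; Proposition~\ref{prop.graded Jacobson radical}(4) gives symmetrically a right inverse. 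From this: every graded $\bar A$-module is semisimple (supported blockwise over graded division rings); the graded-simple $A$-modules are up to isomorphism exactly the shifts $S_i[h]$ of $S_i:=Ae_i/J^{gr}(A)e_i\cong\Delta_i$ (a graded-simple module is cyclic, killed by $J^{gr}(A)$, and supported at one object $i$, hence a graded-simple $\Delta_i$-module); and Nakayama's lemma holds for finitely generated graded modules: if $J^{gr}(A)M=M$, then expressing the finitely many homogeneous generators through themselves with coefficients in $J^{gr}(A)$ and truncating by a common $e_F$ traps them in $J^{gr}(e_FAe_F)M'$ with $M'=\sum(e_FAe_F)x_k$ a finitely generated $e_FAe_F$-module, so $M'=0$ by the classical lemma and $M=0$. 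In particular $J^{gr}(A)M$ is superfluous in $M$ and $\mathrm{rad}(N)=J^{gr}(A)N$ for $N$ finitely generated.

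For (2) and (3): a finitely generated projective graded left module $P$ is a quotient --- hence, being projective, a direct summand --- of a finite direct sum $\bigoplus_kAe_{i_k}[h_k]$ built from homogeneous generators; since the summands have local endomorphism rings, the Krull--Remak--Schmidt--Azumaya theorem gives $P\cong\bigoplus_{k\in T}Ae_{i_k}[h_k]$ for some subset $T$, which is (3), with $|T|=1$ in the indecomposable case yielding the first half of (2). For uniqueness, from $Ae_i[h]\cong Ae_j[k]$ pass to tops to get $S_i[h]\cong S_j[k]$: forgetting the grading and comparing which idempotent acts as the identity forces $i=j$, and when $A$ is basic $\Delta_i$ (hence $S_i$) is concentrated in degree $0$, so $S_i[h]\cong S_i[k]$ forces $h=k$. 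For (4): given finitely generated $M$, the quotient $M/J^{gr}(A)M$ is finitely generated and semisimple, hence $\cong\bigoplus_kS_{i_k}[h_k]$; then $P:=\bigoplus_kAe_{i_k}[h_k]\to\bigoplus_kS_{i_k}[h_k]$ is a projective cover (finite direct sum of the projective covers $Ae_{i_k}[h_k]\to S_{i_k}[h_k]$, whose kernels $J^{gr}(A)e_{i_k}[h_k]$ are superfluous), and lifting it along $M\twoheadrightarrow M/J^{gr}(A)M$ gives $g:P\to M$, onto because $\mathrm{im}(g)+J^{gr}(A)M=M$ with $J^{gr}(A)M$ superfluous, and with superfluous kernel by the usual chase. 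Finally (5): by (3), $P\cong\bigoplus_kAe_{i_k}[h_k]$, and $P\to P/\mathrm{rad}(P)=P/J^{gr}(A)P\cong\bigoplus_kS_{i_k}[h_k]$ is the required projective cover.

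The left--right symmetric versions follow by running the same argument with the right modules $e_iA$ in place of $Ae_i$, every ingredient used (Proposition~\ref{prop.graded Jacobson radical} and its characterizations, locality of $e_iA_0e_i$, diagonality of $\bar A$) being left--right symmetric. I expect the only genuinely delicate points to be the two flagged above: that each block $\Delta_i$ of $\bar A$ is a graded division ring, and the reduction of Nakayama's lemma for finitely generated graded modules to the unital subalgebras $e_FAe_F$; the rest is a faithful transcription of the classical unital theory.
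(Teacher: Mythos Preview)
Your proof is correct and complete. The approach agrees with the paper's for assertion (1) (both invoke Proposition~\ref{prop.graded Jacobson radical}(3) with $h=0$ and compare with the trivially graded algebra $A_0$), but diverges thereafter in emphasis and in one technical point.

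The paper is deliberately brief for assertions (2)--(5), declaring the argument ``entirely similar to the one for semiperfect (ungraded) associative algebras with unit'' and only spelling out the uniqueness in (2). For that uniqueness it uses a direct Yoneda argument: an isomorphism $Ae_i[h]\cong Ae_j[k]$ corresponds to homogeneous elements $x\in e_iA_{k-h}e_j$ and $y\in e_jA_{h-k}e_i$ with $yx=e_i$ and $xy=e_j$, which immediately contradicts $e_iAe_j+e_jAe_i\subseteq J^{gr}(A)$ when $i\neq j$, and in the basic case contradicts $e_iA_{k-h}e_i\subseteq J^{gr}(A)$ when $h\neq k$. You instead pass to tops and exploit the structure of $\bar A=A/J^{gr}(A)$, after first proving that each block $\Delta_i=e_i\bar Ae_i$ is a graded division ring. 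Both routes are valid; the paper's is shorter, while yours yields the additional structural fact that $\bar A$ is graded semisimple with graded-division-ring blocks, which then drives your explicit classification of graded-simples and your construction of projective covers in (4). Your reduction of the graded Nakayama lemma to the unital subalgebras $e_FAe_F$ (using $e_FJ^{gr}(A)e_F=J^{gr}(e_FAe_F)$, which follows from Proposition~\ref{prop.graded Jacobson radical}) is a genuine detail the paper suppresses. One minor remark: in your division-ring step, $e_i-yx$ lying in the radical of the local ring $e_iA_0e_i$ actually gives $\overline{yx}=\bar e_i$ (not merely a unit), so $\bar y$ itself is the left inverse of $\bar x$; your conclusion is unaffected.
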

\begin{proof}
 1) Put $J=J^{gr}(A)$. We need to prove that $e_iJ_0e_j=e_iJ(A_0)e_j$, for all $i,j\in I$. Let $x\in e_iA_0e_j$ be any element. By Proposition \ref{prop.graded Jacobson radical}, we have that $x\in e_iJ_0e_j$ if, and only if, $e_j-yx$ is invertible in $e_iA_0e_i$, for all $y\in e_jA_0e_i$. On the other hand, if we consider $A_0$ as a trivially graded algebra and we apply to it that same proposition, we get that this last condition is equivalent to saying that $x\in e_iJ^{gr}(A_0)e_j=e_iJ(A_0)e_j$.

The proof of the remaining assertions is  entirely similar to the
one for semiperfect (ungraded) associative algebras with unit (see,
e.g., \cite{Kasch}) and here we only summarize the adaptation,
leaving the details to the reader. For assertion 2), suppose that
there is an isomorphism $f:Ae_i[h]\stackrel{\cong}{\longrightarrow}
Ae_j[k]$ in $A-Gr$, with $(i,h),(j,k)\in I\times H$. The map  $\rho
:e_iA_{k-h}e_j\longrightarrow\text{Hom}_{A-Gr}(Ae_i[h],Ae_j[k])$,
given by $\rho (x)(a)=ax$, for all $a\in Ae_i$, is an isomorphism of
$K$-vector spaces, so that $f=\rho_x$, for a unique $x\in
e_iA_{k-h}e_j$. Similarly, there is a unique $y\in e_jA_{h-k}e_i$
such that $f^{-1}=\rho_y$. We then get that $yx=e_i$ and $xy=e_j$.
If $i\neq j$, this is  a contradiction since
$e_iAe_j+e_jAe_i\subseteq J^{gr}(A)$. Therefore we necessarily have
$i=j$ and,  in case $A$ is basic,  we also have $h=k$ for otherwise
we would have that $yx=e_i\in J^{gr}(A)$, which is absurd.

 On the other hand, the map
$\rho:e_iA_0e_i\longrightarrow\text{End}_{A-Gr}(Ae_i[h])$ given
above is an isomorphism of algebras. Therefore each $Ae_i[h]$ has a
local endomorphism algebra in $A-Gr$. Since each finitely generated
graded left $A$-module is an epimorphic image of a finite direct sum
of modules of the form $Ae_i[h]$, we conclude that the category
$A-grproj$ of finitely generated projective graded left $A$-module
is a Krull-Schmidt one, with any indecomposable object isomorphic to
some $Ae_i[h]$. This proves assertion 2 and 3.

As in the ungraded case, the fact that $\text{End}_{A-Gr}(Ae_i[h])$
is a local algebra implies that $J^{gr}(A)e_i[h]$ is the unique
maximal graded submodule of $Ae_i[h]$. If $S_i:=Ae_i/J^{gr}(A)e_i$,
then $S_i[h]$ is a graded-simple module, for each $h\in H$, and all
graded-simple left modules are of this form, up to isomorphism.
Since the projection $Ae_i[h]\longrightarrow S_i[h]$ is a projective
cover in $A-Gr$ we conclude that each graded-simple left $A$-module
has a projective cover in $A-Gr$. From this argument we immediately
get assertion 5, while assertion 4 follows as in the ungraded case.

Finally, the definition of weakly basic
graded algebra is left-right symmetric, so that the last statement
of the proposition also follows.
\end{proof}

\section{Graded pseudo-Frobenius algebras}

\subsection{Definition and characterizations}

 We look at $K$ as an $H$-graded algebra such that $K_h=0$, for
$h\neq 0$.  If $V=\oplus_{h\in H}V_h$ is a  graded $K$-vector space,
then  its dual $D(V)$ gets identified with the graded $K$-vector
space $\oplus_{h\in H}\text{Hom}_K(V_h,K)$, with
$D(V)_h=\text{Hom}_K(V_{-h},K)$ for all $h\in H$.

\begin{defi}
Let $V=\oplus_{h\in H}V_h$ and $W=\oplus_{h\in H}W_h$  be graded
$K$-vector spaces, where the homogeneous components are finite
dimensional, and let $d\in H$ be any element. A bilinear form
$(-,-):V\times W\longrightarrow K$ is said to be \emph{of degree
$d$} if $(V_h,W_k)\neq 0$ implies that $h+k=d$. Such a form will be
called \emph{nondegenerate} when the induced maps $W\longrightarrow
D(V)$ ($w\rightsquigarrow (-,w)$)) and $V\longrightarrow D(W)$
($v\rightsquigarrow (v,-)$)) are bijective.
\end{defi}

Note that, in the above situation, if $(-,-):V\times
W\longrightarrow K$ is a nondegenerate bilinear form of degree $d$,
then the bijective map $W\longrightarrow D(V)$ ( resp
$V\longrightarrow D(W)$) given above gives an isomorphism of graded
$K$-vector spaces
 $W[d]\stackrel{\cong}{\longrightarrow}D(V)$ (resp. $V[d]\stackrel{\cong}{\longrightarrow}D(W)$).

 The following concept is fundamental for us.

 \begin{defi} \label{defi.graded Nakayama form}
 Let $A=\oplus_{h\in H}A_h$ be a weakly basic graded algebra with
 enough idempotents. A bilinear form $(-,-):A\times A\longrightarrow
 K$ is said to be a \emph{graded Nakayama form} when the following
 assertions hold:

 \begin{enumerate}
 \item $(ab,c)=(a,bc)$, for all $a,b,c\in A$
 \item For each $i\in I$ there is a unique $\nu (i)\in I$ such that
 $(e_iA,Ae_{\nu (i)})\neq 0$ and the assignment $i\rightsquigarrow\nu
 (i)$ defines a bijection $\nu :I\longrightarrow I$. \item There is
 a map $\mathbf{h}:I\longrightarrow H$ such that the
 induced map $(-,-):e_iAe_j\times e_jAe_{\nu (i)}\longrightarrow K$ is a
 nondegenerated graded bilinear form  degree $h_i=\mathbf{h}(i)$,
 for all $i,j\in I$.
 \end{enumerate}
The bijection $\nu$ is called the \emph{Nakayama permutation} and
$\mathbf{h}$ will be called the \emph{degree map}. When $\mathbf{h}$
is a constant map and $\mathbf{h}(i)=h$, we
 will say that $(-,-):A\times A\longrightarrow K$ is a \emph{graded
 Nakayama form of degree $h$}.
 \end{defi}

 \begin{defi} \label{defi.locally Noetherian}
 A graded algebra with enough idempotents $A=\oplus_{h\in H}A_h$
 will be called \emph{left (resp. right) locally Noetherian} when
 $Ae_i$ (resp. $e_iA$) satisfies ACC on graded submodules, for each $i\in I$. We will
 simply say that it is locally Noetherian when it is left and right
 locally Noetherian.
 \end{defi}

 Recall that a graded module is \emph{finitely cogenerated} when its
 graded socle is finitely generated and essential as a graded submodule. Recall also that a Quillen
 exact category $\mathcal{E}$ (e.g. an abelian category) is said to be a
 \emph{Frobenius category} when it has enough projectives and enough
 injectives and the projective and the injective objects are the
 same in $\mathcal{E}$.

 \begin{teor} \label{teor.graded pseudo-Frobenius algebras}
 Let $A=\oplus_{h\in H}A_h$ be a weakly basic graded algebra with
 enough idempotents. Consider the following assertions:

 \begin{enumerate}
 \item $A-Gr$ and $Gr-A$ are Frobenius categories
 \item $D(_AA)$ and $D(A_A)$ are projective graded  $A$-modules
 \item The projective finitely generated objects and the injective
 finitely cogenerated  objects coincide in $A-Gr$ (resp. $Gr-A$)
 \item There exists a graded Nakayama form $(-,-):A\times A\longrightarrow
 K$.
 \end{enumerate}
 Then  the following chain of implications holds:
\begin{center}
$1)\Longrightarrow 2)\Longrightarrow 3)\Longleftrightarrow 4)$.
\end{center}

  When $A$ is  graded locally
 bounded, also $4)\Longrightarrow 2)$ holds. Finally, if $A$ is
 graded locally Noetherian, then
 the four  assertions are equivalent.
 \end{teor}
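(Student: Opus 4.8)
The plan is to transport the classical (two-sided) pseudo-Frobenius theory of Nakayama, Faith and Morita to the present setting, with the duality $D$ of Proposition~\ref{prop.the functor D over algebras with s.i.} playing the role of the $K$-dual. Two facts will be used throughout. First, part~(4) of that proposition gives a natural isomorphism $\text{Hom}_{A-Gr}(-,D(N))\cong D(N\otimes_A-)_0$, so \emph{$D(N)$ is injective in $A-Gr$ whenever $N$ is a flat right module} (and symmetrically); since $_AA$, $A_A$ and each $Ae_i$, $e_iA$ are projective, this already makes $D(_AA)$, $D(A_A)$, $D(Ae_i)$, $D(e_iA)$ injective. Second, since all these modules are locally finite dimensional, $D$ restricts to the duality of Proposition~\ref{prop.the functor D over algebras with s.i.}(3), which interchanges the finitely generated projectives $Ae_i[h]$ with the indecomposable injective modules $D(Ae_i)[h]$ (the injective envelopes of the graded-simple right modules), each with simple essential socle $D(S_i)[h]$; dually $D(e_iA)$ has simple essential socle $D(\bar S_i)$, where $S_i,\bar S_i$ are the graded-simple tops of $Ae_i,e_iA$ from Proposition~\ref{prop.basic graded algebra}. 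I will also use, exactly as for semiperfect rings, that over a weakly basic $A$ every projective graded module is a direct sum of copies of modules $Ae_i[h]$ (resp.\ $e_iA[h]$); in particular a nonzero projective has nonzero graded top and an indecomposable projective is finitely generated.

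For $1)\Rightarrow2)$: in a Frobenius category injective objects are projective, and $D(A_A)$ is injective in $A-Gr$ (as $A_A$ is flat), hence projective; symmetrically $D(_AA)$ is projective in $Gr-A$. For $2)\Rightarrow3)$, the crucial point is that $Ae_i$ is a direct summand of $_AA$, so $D(Ae_i)$ is a summand of the projective $D(_AA)$, hence an indecomposable finitely generated projective; thus $D(Ae_i)\cong e_jA[h]$ and $Ae_i\cong D(e_jA)[-h]$ is an indecomposable injective with simple essential socle, i.e.\ finitely cogenerated injective. Hence every finitely generated projective (a finite direct sum of such $Ae_i[h]$, by Proposition~\ref{prop.basic graded algebra}) is finitely cogenerated injective. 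Conversely, if $M$ is finitely cogenerated injective then $M\cong E(\text{soc}\,M)$, a finite direct sum of injective envelopes of graded-simples; since $D$ is a duality and, by $2)$, both $D(Ae_i)$ and $D(e_jA)$ are projective, every graded-simple is (up to a shift) the socle of some $Ae_i$, so each such envelope is some $Ae_i[h]$ and $M$ is finitely generated projective. The right-hand statement is symmetric, using $D(A_A)$ projective.

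For $3)\Leftrightarrow4)$ I would read the Nakayama form off the isomorphisms $D(e_iA)\cong Ae_{\nu(i)}[h_i]$ and back. Given $4)$, the nondegenerate graded pairings $e_iAe_j\times e_jAe_{\nu(i)}\to K$ of degree $h_i$ assemble, using $(ab,c)=(a,bc)$ to check $A$-linearity, into an isomorphism $Ae_{\nu(i)}\cong D(e_iA)[h_i]$, so $D(e_iA)$ is finitely generated projective and $3)$ follows as above. Given $3)$: $D(e_iA)$ is finitely cogenerated injective (simple essential socle $D(\bar S_i)$), hence finitely generated projective, hence $\cong Ae_{\nu(i)}[m_i]$ for a unique $\nu(i)\in I$; choosing such isomorphisms, $e_{\nu(i)}$ corresponds to a homogeneous form $\theta_i$ on $e_iA$ that is supported on $e_iAe_{\nu(i)}$ and generates $D(e_iA)$. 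Then $\psi:=\sum_i\theta_i$ is well defined (disjoint supports) and $(a,b):=\psi(ab)$ is the desired graded Nakayama form: associativity is automatic, $(e_iA,Ae_k)\neq0$ exactly when $k=\nu(i)$, $\nu$ is a bijection (its inverse coming from the symmetric isomorphisms $D(Ae_j)\cong e_{?}A[?]$ together with Proposition~\ref{prop.basic graded algebra}(2)), and the induced pairing on $e_iAe_j\times e_jAe_{\nu(i)}$ is nondegenerate of degree $-m_i$, independent of $j$, which yields the degree map.

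Finally, for $4)\Rightarrow2)$ in the graded locally bounded case, conditions (1)--(2) of Definition~\ref{defi.locally f.d. graded algebra} are exactly what make the restricted dual commute with the decompositions $_AA=\oplus_jAe_j$ and $A_A=\oplus_ie_iA$; hence $D(_AA)\cong\oplus_jD(Ae_j)$ and $D(A_A)\cong\oplus_iD(e_iA)$ are direct sums of finitely generated projectives, hence projective. For the graded locally Noetherian case I would close the circle with $3)\Rightarrow1)$: local Noetherianity makes arbitrary direct sums of injectives injective, so every projective (a direct sum of $Ae_i[h]$) is injective, while every injective splits into indecomposable injectives, each equal to some $Ae_i[h]$ by $3)$; since $A-Gr$ also has enough projectives and enough injectives ($M\hookrightarrow D(P)$ for a projective $P\twoheadrightarrow D(M)$), it is Frobenius, and symmetrically for $Gr-A$. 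I expect the main obstacle to be $2)\Rightarrow3)$, precisely the claim that $D(Ae_i)$ and $D(e_iA)$ are \emph{finitely generated} projective with simple essential socle: this is where the structural content of classical PF rings, together with the decomposition of projectives over a weakly basic algebra, must genuinely be reworked; the remaining implications are then formal, the only residual nuisance being the bookkeeping of degree shifts in the merely weakly basic (non-basic) case.
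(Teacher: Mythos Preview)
Your overall architecture matches the paper's, and most steps are correct. Two places, however, rest on claims that are not justified and do not follow from what you have set up.

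In $2)\Rightarrow 3)$ you assert, ``exactly as for semiperfect rings'', that over a weakly basic $A$ every projective graded module decomposes as a direct sum of modules $Ae_i[h]$, and hence that an indecomposable projective is finitely generated. The analogy is misleading: here $A$ has in general infinitely many $e_i$, so $A/J^{gr}(A)$ is not Artinian, and the standard semiperfect argument does not transfer; Proposition~\ref{prop.basic graded algebra} only covers the finitely generated case. The paper circumvents this entirely. You already know (your preamble) that $D(Ae_i)$ is \emph{injective} in $Gr-A$ with simple essential socle, and that it is projective as a summand of $D({}_AA)$. Embed it as a summand of a free module $\oplus_\alpha e_{j_\alpha}A[h_\alpha]$; the simple essential socle lies in a finite sub-sum, so the projection of $D(Ae_i)$ to that finite sub-sum is a monomorphism (a nonzero kernel would meet the socle), and injectivity of $D(Ae_i)$ makes it split. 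Thus $D(Ae_i)$ is a summand of a finitely generated module. Your ingredients suffice; only this last step needs replacing.

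The more substantial gap is in $3)\Rightarrow 1)$ under local Noetherianity. You write that ``every injective splits into indecomposable injectives, each equal to some $Ae_i[h]$ by $3)$''. Local Noetherianity indeed gives the Matlis--Gabriel decomposition, but an indecomposable injective in a locally Noetherian Grothendieck category need not be finitely cogenerated (think of $\mathbb{Q}$ in $\mathbb{Z}\text{-Mod}$), so assertion $3)$ does not apply directly. What is missing is that every object of $A-Gr$ has an essential graded socle. The paper obtains this by showing that each $Ae_i$ has \emph{finite length}: from $3)$ and $4)$ one has $D(Ae_j)\cong e_{\nu^{-1}(j)}A[h]$, so each $e_kA$ is the $D$-dual of a Noetherian object and is therefore Artinian; by the symmetric argument each $Ae_i$ is Artinian, hence of finite length. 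Once the generators have finite length, every object has essential socle, every indecomposable injective is the envelope of a simple and hence finitely cogenerated, and only then does $3)$ force it to be some $Ae_i[h]$.
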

 \begin{proof}
$1)\Longrightarrow 2)$ By proposition \ref{prop.the functor D over
algebras with s.i.}, we have a natural isomorphism

\begin{center}
$HOM_A(?,D(A_A))\cong D(A\otimes_A?):A-Gr\longrightarrow K-Gr$,
\end{center}
and the second functor is exact. Then also the first is exact, which
is equivalent to saying that $D(A_A)$ is an injective object of
$A-Gr$ (see \cite{NVO}[Lemma I.2.4]). A symmetric argument proves
that $D(_AA)$ is injective in $Gr-A$. Then both $D(A_A)$ and
$D({}_{A}A)$ are projective in $A-Gr$ and $Gr-A$ since these are
Frobenius categories.

 $2)\Longrightarrow 3)$ The duality
 $D:A-lfdgr\stackrel{\cong^{op}}{\longleftrightarrow}lfdgr-A:D$
 exchanges projective and injective objects, and, also,   simple
 objects on the left and on the right. Since $A$ is locally finite dimensional all finitely generated left
or right
 graded $A$-modules are locally finite dimensional. Moreover, our
 hypotheses guarantee that each finitely generated projective graded
 $A$-module $P$ is the projective cover of a finite direct sum of simple
 graded modules. Then $D(P)$ is the injective envelope in $A-lfdgr$ of a finite
 direct sum of simple objects. We claim that each injective object $E$ of
 $A-lfdgr$ is an injective object of $A-Gr$. Indeed if $U$ is a graded left
 ideal of $A$, $h\in H$ is any element and $f:U[h]\longrightarrow E$ is morphism in $A-Gr$,
 then we want to prove that $f$ extends to $A[h]$. By an appropriate use of Zorn lemma, we can assume without
 loss of generality that there is no graded submodule  $V$ of $A[h]$
 such that $U[h]\subsetneq V$ and $f$ is extendable to $V$. The
 task is  then reduced to prove that $U=A$. Suppose this is not the
 case, so that there exist $i\in I$ and a homogeneous element $x\in
 Ae_i$ such that $x\not\in U$. But then $Ax+U/U$ is a locally
 finite dimensional graded $A$-module since so is $Ax$. It follows
 that $\text{Ext}_{A-lfdgr}(\frac{U+Ax}{U}[h],E)=0$, which implies that
 $f:U[h]\longrightarrow E$ can be extended to $(U+Ax)[h]$, thus giving a
 contradiction. Now the obvious graded version of Baer's criterion (see \cite{NVO}[Lema
I.2.4]) holds and $E$ is injective in $A-Gr$. In our situation, we
conclude that $D(P)$ is a finitely cogenerated injective object of
$A-Gr$, for each finitely generated projective object $P$ of $Gr-A$.

  Conversely, if $S$ is a simple graded right
 $A$-modules and $p:P\longrightarrow D(S)$ is a projective cover,
 then $D(p):S\cong DD(S)\longrightarrow D(P)$ is an injective
 envelope. This proves that the injective envelope in $A-Gr$ of any
 simple object, and hence any finitely cogenerated injective object of $A-Gr$,  is locally finite dimensional.

 Let  now  $E$ be any locally finite dimensional graded left
 $A$-module. We then get that $E$ is an injective finitely
 cogenerated object of $A-Gr$
 if, and only if, $E\cong D(P)$ for some finitely generated
 projective graded right $A$-module $P$. This implies that $E$ is
 isomorphic to a finite direct sum of graded modules of the form $D(e_iA[-h_i])\cong
 D(e_iA)[h_i]$, where  $h_i\in H$. We then assume,
 without loss of generality, that $E=D(e_iA)[h]$, for some $i\in I$
 and  $h\in H$. Since $e_iA[-h]$ is a direct summand of $A[-h]$ in
 $Gr-A$, assertion 2 implies that $E$ is a projective object in
 $A-Gr$.  Then $E$ is isomorphic to a direct summand of a direct sum of graded modules of the form $Ae_i[h_i]$.
 From the fact that $E$ has a finitely
 generated essential graded socle we easily
 derive that $E$ is a direct summand of $\oplus_{1\leq k\leq
 r}Ae_{i_k}[h_{i_k}]$, for some indices $i_k\in I$. Therefore each finitely cogenerated injective object of $A-Gr$ is  finitely
 generated projective. The analogous fact is true for graded right
 $A$-modules.

 On the other hand, if $P$ is a finitely generated projective graded
 left $A$-module, then $D(P)$ is a finitely cogenerated injective
 objet of $Gr-A$ and, by the previous paragraph, we know that $D(P)$
 is finitely generated projective. We then get that $P\cong DD(P)$
 is finitely cogenerated in $A-Gr$.

$3)\Longrightarrow 4)$ From assertion 3) we obtain its left-right
symmetric statement by applying the duality
$D:A-lfdgr\stackrel{\cong^{op}}{\longleftrightarrow}lfdgr-A:D$,
bearing in mind that an injective object in $lfdgr-A$ is also
injective in $Gr-A$. It  follows that $D(e_iA)$ is an indecomposable
finitely generated projective left $A$-module, for each $i\in I$. We
then get a unique index $\nu (i)\in I$ and an $h_i\in H$ such
that $D(e_iA)\cong Ae_{\nu (i)}[h_i]$. We then have a map $\nu
:I\longrightarrow I$. By the same reason, given another $j\in I$, we have
that $D(Ae_j)\cong e_{\mu (j)}A[k_j]$, for a unique  $\mu (j)\in I$
and $k_j\in H$. We then get

\begin{center}
 $e_iA\cong DD(e_iA)\cong D(Ae_{\nu
(i)}[h_i])\cong D(Ae_{\nu (i)})[-h_i]\cong e_{\mu\nu (i)}A[k_{\nu
(i)}-h_i]$,
\end{center}
and, by proposition \ref{prop.basic graded algebra}, we conclude
that $\mu\nu (i)=i$, for all $i\in I$. This and its symmetric
argument prove that the maps $\mu$ and $\nu$ are mutually inverse.

We fix an isomorphism of graded left $A$-modules $f_i:Ae_{\nu
(i)}[h_i]\stackrel{\cong}{\longrightarrow}D(e_iA)$, for each $i\in
I$. Then we get a bilinear map

\begin{center}
$e_iA\times Ae_{\nu (i)}\stackrel{1\times f_i}{\longrightarrow}
e_iA\times D(e_iA)\stackrel{\text{can}}{\longrightarrow}K$.
\end{center}
Note that we have
$(a,cb)=f_i(cb)(a)=[cf_i(b)](a)=f_i(b)(ac)=(ac,b)$, for all
$(a,b)\in e_iA\times Ae_{\nu (i)}$ and all $c\in A$. This bilinear
form is clearly nondegenerate because $e_iA$ is locally finite
dimensional and, due to the duality $D$, the canonical bilinear form
$e_iA\times D(e_iA)\longrightarrow K$ is nondegenerate, and actually
graded of degree $0$ since
$D(e_iA)_k=D(e_iA_{-k})=\text{Hom}_K(e_iA_{-k},K)$, for each $k\in
H$. On the other hand, if $s,t\in H$ and $a\in e_iA_s$ and $b\in
A_te_{\nu (i)}$ are homogeneous elements, then the degree of $b$ in
$Ae_{\nu (i)}[h_i]$ is $t-h_i$. We get that $(a,b)\neq 0$ if, and
only if, $s+(t-h_i)=0$. This shows that the given bilinear form is
graded of degree $h_i$.

We then define an obvious bilinear form $(-,-):A\times
A\longrightarrow K$ such that $(e_iA,Ae_j)=0$, whenever $j\neq\nu
(i)$, and whose restriction to $e_iA\times Ae_{\nu (i)}$ is the
graded bilinear form of degree $h_i$ given above, for each $i\in I$.
Since $(a,b)=\sum_{i,j\in I}(e_ia,be_j)=\sum_{i\in I}(e_ia,be_{\nu
(i)})$, we get that $(ac,b)=(a,cb)$, for all $a,b,c\in A$, and,
hence, that $(-,-):A\times A\longrightarrow K$ is a graded Nakayama
form.

$4)\Longrightarrow 3)$ Let $(-,-):A\times A\longrightarrow K$ be a
graded Nakayama form and let $\nu :I\longrightarrow I$ and
$\mathbf{h}:I\longrightarrow H$ be the maps given in definition
\ref{defi.graded Nakayama form}. We put $\mathbf{h}(i)=h_i$, for
each $i\in I$. Since the restriction of $(-,-):e_iA\times Ae_{\nu
(i)}\longrightarrow K$ is a nondegenerate graded bilinear form of
degree $h_i$, we get induced isomorphisms of graded $K$-vector
spaces $f_i:Ae_{\nu (i)}[h_i]\longrightarrow D(e_iA)$ and
$g_i:e_{\nu^{-1}(i)}A[h_i]\longrightarrow D(Ae_i)$, where
$f_i(b)=(-,b):x\rightsquigarrow (x,b)$ and
$g_i(a)=(a,-):y\rightsquigarrow (a,y)$. The fact that
$(ac,b)=(a,cb)$, for all $a,b,c\in A$ implies that $f_i$ is a
morphism in $A-Gr$ and $g_i$ is a morphism in $Gr-A$. Therefore the
projective finitely generated objects and the injective finitely
cogenerated objects coincide in $A-lfdgr$ and $lfdgr-A$. By our
comments about the graded Baer criterion, assertion 3 follows
immediately.

$3),4)\Longrightarrow 2)$ We assume  that $A$ is graded locally
bounded. The hypotheses imply that the injective
 finitely cogenerated objects of $A-Gr$ and $Gr-A$ are locally finite
 dimensional and they coincide with the finitely generated
 projective modules. But in this case  $A$ is  locally finite
 dimensional both as a left and as a right graded $A$-module.
 Indeed,  given $i\in I$, one has $e_iA_h=\oplus_{j\in I}e_iA_he_j$.
 By the graded locally bounded condition of $A$ almost all summands
 of this direct sum are zero. This gives that, for each $(i,h)\in I\times
 H$, the vector spaces $e_iA_h$ is finite dimensional, whence,
 that
 $_AA$ is in $A-lfdgr$. Similarly, we get that $A_A\in A-lfdgr$. It
 follows that $D(_AA)$ and $D(A_A)$ are locally finite dimensional.

We claim that $D(A_A)$ is isomorphic to $\oplus_{i\in I}D(e_iA)$
 which, together with assertion 3, will give that $D(A_A)$ is a
 projective graded left $A$-module. This plus its symmetric argument will then
 finish the proof. Indeed $\oplus_{i\in I}D(e_iA)$ is identified with the set of $f\in D(A_A)$ such that $f(e_iA)=0$, for almost all $i\in I$. But we have $D(A_A)=D({}_AA_A)$ (see Remark \ref{rem.dual of bimodules and modules} and Definition \ref{defi.locally f.d. graded algebra}). This means that, for each $g\in D(A_A)$, we have that $g(e_iA_he_j)=0$ for almost all $(i,j,h)\in I\times I\times H$. We then get that $g\in\oplus_{i\in I}D(e_iA)$, so that $D(A_A)=\oplus_{i\in I}D(e_iA)$.

$3),4)\Longrightarrow 1)$ We assume  that $A$ is graded locally
Noetherian. Then $A-Gr$ and $Gr-A$ are locally Noetherian
Grothendieck categories, i.e., the Noetherian objects form a set (up to isomorphism) and
generate both categories. Then each injective object in $A-Gr$ or
$Gr-A$ is a direct sum of indecomposable injective objects and each
direct sum of injective objects is again injective (see
\cite{G0}[Proposition IV.6 and Theorem IV.2]). Since, by hypothesis,
$Ae_i$ and $e_iA$ are injective objects in $A-Gr$ and $Gr-A$,
respectively, we deduce that each projective object in any of these
categories is injective.

On the other hand, $Ae_i$ (resp. $e_iA$) is a Noetherian object of
$A-Gr$ (resp. $Gr-A$), which implies by duality that $D(Ae_i)$
(resp. $D(e_iA)$) is an artinian object of $lfdgr-A$ (resp.
$A-lfdgr$), and hence also of $Gr-A$ (resp. $A-Gr$). But we have
$D(Ae_i)\cong e_{\nu^{-1}(i)}A[h_{\nu^{-1}(i)}]$ (resp.
$D(e_iA)\cong Ae_{\nu (i)}[h_i]$), where $\nu$ is the Nakayama
permutation. By the bijectivity of $\nu$, we get that all $Ae_j$ and
$e_jA$ are Artinian (and Noetherian) objects, whence they have
finite length. Therefore $A-Gr$ and $Gr-A$ have a set of generators
of finite length, which easily implies that the graded socle of each
object in these categories is a graded essential submodule. In
particular, each injective object in $A-Gr$ (resp. $Gr-A$) is the
injective envelope of its graded socle. But if $\{S_t:$ $t\in T\}$
is a family of simple objects of $A-Gr$ (resp. $Gr-A$) and
$\iota_t:S_t\rightarrowtail E(S_t)$ is an injective envelope in
$A-Gr$ (resp. $Gr-A$), then the induced map $\iota:=\oplus_{t\in
T}:\oplus_{t\in T}S_t\rightarrowtail\oplus_{t\in T}E(S_t)$ is an
injective envelope in $A-Gr$ (resp. $Gr-A$) since the direct sum of
injectives is injective. Since each $E(S_t)$ is finitely
cogenerated, whence projective by hypothesis, it follows that each
injective object in $A-Gr$ (resp. $Gr-A$) is projective.
\end{proof}

 \begin{defi} \label{defi.pseudo-Frobenius graded algebras}
 A weakly basic  locally finite dimensional
 graded algebra satisfying any of the
  conditions 3 and 4 of the previous proposition will be said to be
  a \emph{graded pseudo-Frobenius algebra}. When $A$ satisfies
  condition 1, it will be called \emph{graded Quasi-Frobenius}.
 \end{defi}

By definition, if $(-,-):A\times A\longrightarrow K$ is a graded Nakayama form for $A$, then its associated Nakayama permutation $\nu$ and degree map $\mathbf{h}$ are uniquely determined by $(-,-)$. However, one can naturally ask if they just depend on $A$. The following result gives an answer:

\begin{prop} \label{prop.uniqueness of Nakayama permutation}
Let $A$ be a graded pseudo-Frobenius algebra, with $(e_i)_{i\in I}$ as a weakly basic distinguished family of orthogonal idempotents. The following assertions hold:

\begin{enumerate}
\item $\text{Soc}_{gr}({}_AA)=\text{Soc}_{gr}(A_A)$. We will denote this ideal by $\text{Soc}_{gr}(A)$.
\item All graded Nakayama forms for $A$ have the same Nakayama permutation. It assigns to each $i\in I$ the unique $\nu (i)\in I$ such that $e_i\text{Soc}_{gr}(A)e_{\nu (i)}\neq 0$.
\item All graded Nakayama forms for $A$ do not have the same degree map.
\end{enumerate}
\end{prop}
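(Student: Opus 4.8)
The plan is to establish the three assertions in turn; assertion~(1) carries the main weight, assertion~(2) is a short by-product of it, and assertion~(3) is of a different (negative) nature and is settled by an example.

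\emph{Assertion (1).} Being graded pseudo-Frobenius, $A$ satisfies by Theorem~\ref{teor.graded pseudo-Frobenius algebras} that the finitely generated projective objects and the finitely cogenerated injective objects coincide in $A-Gr$ and in $Gr-A$. Hence each $Ae_i$ (resp. $e_iA$) is an indecomposable injective object which is finitely cogenerated, so it is the injective envelope of its graded socle, and by indecomposability that socle is graded-simple. Consequently $\text{Soc}_{gr}({}_AA)=\bigoplus_{i\in I}\text{Soc}_{gr}(Ae_i)$, and its right-hand analogue, are graded-semisimple essential sub-bimodules of $A$. I would then identify $\text{Soc}_{gr}({}_AA)=\{a\in A:J^{gr}(A)a=0\}$ and $\text{Soc}_{gr}(A_A)=\{a\in A:aJ^{gr}(A)=0\}$; this needs only that $A/J^{gr}(A)$ be graded-semisimple, which follows from Proposition~\ref{prop.graded Jacobson radical}: each corner $e_iAe_i/e_iJ^{gr}(A)e_i$ is a graded division ring (parts~(3)--(4) of that proposition show every nonzero homogeneous element of it is two-sided invertible), so its graded modules are graded-free. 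Now fix a graded Nakayama form $(-,-):A\times A\longrightarrow K$ (available because $A$ satisfies condition~(4) of Theorem~\ref{teor.graded pseudo-Frobenius algebras}), with Nakayama permutation $\nu$ and degree map $\mathbf{h}$. The nondegeneracy of the restrictions $e_iAe_j\times e_jAe_{\nu(i)}\longrightarrow K$ forces $(-,-)$ to be globally nondegenerate and, exactly as in the classical finite dimensional theory, it produces a graded Nakayama automorphism $\eta$ of $A$: $\eta$ sends $e_iAe_j$ onto $e_{\nu(i)}Ae_{\nu(j)}$ (shifting degrees by $\mathbf{h}(j)-\mathbf{h}(i)$), satisfies $\eta(e_i)=e_{\nu(i)}$, and is the unique algebra automorphism with $(a,b)=(b,\eta(a))$ for all $a,b\in A$. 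Using $(ab,c)=(a,bc)$, the identities $AJ^{gr}(A)=J^{gr}(A)=J^{gr}(A)A$ and global nondegeneracy, one rewrites $\text{Soc}_{gr}({}_AA)=\{a:(J^{gr}(A),a)=0\}$ and $\text{Soc}_{gr}(A_A)=\{a:(a,J^{gr}(A))=0\}$; the $\eta$-symmetry identifies the second set with $\eta^{-1}\bigl(\{a:(J^{gr}(A),a)=0\}\bigr)=\eta^{-1}\bigl(\text{Soc}_{gr}({}_AA)\bigr)$, and since $\eta$ is an algebra automorphism it fixes the characteristic ideal $J^{gr}(A)$, hence also its right annihilator $\text{Soc}_{gr}({}_AA)$. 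Therefore $\text{Soc}_{gr}(A_A)=\text{Soc}_{gr}({}_AA)$. The main obstacle I anticipate is the careful construction of $\eta$ as a genuine algebra automorphism of the non-unital algebra $A$, tracking the degree shifts; this is the graded counterpart of the argument yielding $\text{Soc}({}_RR)=\text{Soc}(R_R)$ for Quasi-Frobenius rings.

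\emph{Assertion (2).} Let $(-,-)$ be any graded Nakayama form, with permutation $\nu$ and degree map $\mathbf{h}$. By the proof of Theorem~\ref{teor.graded pseudo-Frobenius algebras} the form yields an isomorphism in $A-Gr$, $Ae_{\nu(i)}[\mathbf{h}(i)]\stackrel{\cong}{\longrightarrow}D(e_iA)$, which carries $\text{Soc}_{gr}(Ae_{\nu(i)})$ onto $\text{Soc}_{gr}\bigl(D(e_iA)\bigr)=D\bigl(e_iA/e_iJ^{gr}(A)\bigr)$. Since $A$ is weakly basic, $e_iAe_k\subseteq J^{gr}(A)$ for $k\neq i$, so $e_iA/e_iJ^{gr}(A)$ is concentrated at the vertex $i$, and hence so is $\text{Soc}_{gr}(Ae_{\nu(i)})$ on the left. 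Thus $\text{Soc}_{gr}(Ae_j)\subseteq e_{\nu^{-1}(j)}Ae_j$, and it is nonzero because $Ae_j$ is finitely cogenerated; therefore $e_i\,\text{Soc}_{gr}({}_AA)\,e_j=e_i\,\text{Soc}_{gr}(Ae_j)\neq 0$ if and only if $j=\nu(i)$. By assertion~(1) this says that $e_i\,\text{Soc}_{gr}(A)\,e_j\neq 0$ exactly when $j=\nu(i)$, and the right-hand side depends only on $A$; hence every graded Nakayama form has the same Nakayama permutation, the one assigning to $i$ the unique $j$ with $e_i\,\text{Soc}_{gr}(A)\,e_j\neq 0$.

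\emph{Assertion (3).} Here the content is that the degree map is \emph{not} determined by $A$, and the proof is by example. A clean source is any weakly basic graded pseudo-Frobenius algebra which fails to be basic: then there are $j\in I$ and $t\neq 0$ with $e_jA_te_j\not\subseteq J^{gr}(A)$, and by Proposition~\ref{prop.graded Jacobson radical} some homogeneous $u\in e_jA_te_j$ is a unit of $e_jAe_j$. Given a graded Nakayama form $(-,-)$ with degree map $\mathbf{h}$, replace the restriction of $(-,-)$ to $e_{\nu^{-1}(j)}A\times Ae_j$ by the form $(a,b)\mapsto(a,bu)$, leaving all other pairs untouched; associativity is preserved because $c(be_l)=(cb)e_l$ and $c(be_ju)=(cb)e_ju$, and nondegeneracy because right multiplication by $u$ is bijective on $Ae_j$. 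The outcome is a graded Nakayama form with the same Nakayama permutation but with degree map equal to $\mathbf{h}$ away from $\nu^{-1}(j)$ and equal to $\mathbf{h}(\nu^{-1}(j))-t\neq\mathbf{h}(\nu^{-1}(j))$ there. As a concrete instance one takes $A=K[t,t^{-1}]$ with $\text{deg}\,t$ a fixed nonzero element of $H$ of infinite order (or the group algebra $K[\mathbb{Z}/m]$, with $\mathbb{Z}/m$ embedded in $H$, when $H$ is torsion): it is a split, weakly basic, non-basic, graded pseudo-Frobenius algebra with a single degree-$0$ idempotent, and for every $k$ the assignment $(t^a,t^b)\mapsto\delta_{a+b,k}$ is a graded Nakayama form of constant degree $k\cdot\text{deg}\,t$. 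The only verification requiring care is that these prescriptions really are graded Nakayama forms, i.e. that associativity and nondegeneracy hold.
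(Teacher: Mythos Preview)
Your proof is correct, but for assertion~(1) it follows a genuinely different route from the paper. The paper argues module-theoretically: it first shows that each $S=\text{Soc}_{gr}(e_iA)$ is a two-sided ideal (by checking which graded-simple right modules can appear in $AS$ and using that $\nu$ is injective), then uses essentiality of $\text{Soc}_{gr}({}_AA)$ to locate a graded-simple left submodule inside $S$, obtaining $\text{Soc}_{gr}(Ae_{\nu(i)})\subseteq\text{Soc}_{gr}(e_iA)$ and hence $\text{Soc}_{gr}({}_AA)\subseteq\text{Soc}_{gr}(A_A)$. You instead construct the Nakayama automorphism $\eta$ directly from the form, identify the two socles with the left and right annihilators of $J^{gr}(A)$ and then with the sets $\{a:(J^{gr}(A),a)=0\}$ and $\{a:(a,J^{gr}(A))=0\}$, and conclude via $\eta$-invariance of $J^{gr}(A)$. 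This anticipates Corollary~\ref{cor.Nakayama automorphism}, but its proof does not rely on the present proposition, so there is no circularity. Your approach is shorter and makes the left--right symmetry transparent once $\eta$ is available; the paper's argument is more self-contained at this point in the exposition and avoids the delicate verification that $\eta$, though generally not graded, still permutes graded ideals (which is what you need for $\eta(J^{gr}(A))=J^{gr}(A)$; you correctly flag this).

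For assertion~(2) your argument is essentially the paper's. For assertion~(3) both you and the paper proceed by example using the same device---right multiplication by a homogeneous unit---but on different algebras: the paper invokes $K[x,x^{-1},y,z]/(y^2,z^2)$ from Example~\ref{exem.pseudo-Frobenius}(4), while your $K[t,t^{-1}]$ is simpler and is in fact Example~\ref{exem.pseudo-Frobenius}(2).
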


\begin{proof}
1), 2)  Let $(-,-):A\times A\longrightarrow K$ be a graded Nakayama form
for $A$. We have seen in the proof of the implication
$4)\Longrightarrow 3)$ in theorem \ref{teor.graded pseudo-Frobenius
algebras} that then $D(e_iA)\cong Ae_{\nu (i)}[h_i]$. Due to
Proposition \ref{prop.basic graded algebra}, we get that $\nu (i)$ is
independent of $(-,-)$. Moreover, by duality, we get an isomorphism
$e_iA\cong D(Ae_{\nu (i)})[-h_i]$, which induces an isomorphism
between the graded socles. But the graded socle of $D(Ae_j)$ is
isomorphic to $S_j:=e_jA/e_jJ^{gr}(A)$, for each $j\in I$. We then
get that
$\text{Soc}^{gr}(e_iA)e_{j}[-h]\cong\text{Hom}_{A-Gr}(S_j[h],e_i\text{Soc}^{gr}(A))=0$,
for all $j\neq\nu (i)$ and $h\in H$.  Therefore $\nu (i)$ is the unique element of $I$ such that $\text{Soc}^{gr}(e_iA))e_{\nu (i)}\neq 0$.

Put $S=\text{Soc}_{gr}(e_iA)$ for simplicity. The two-sided graded ideal $AS$ is, as a graded right module, a sum of copies of graded-simple right $A$-modules of the form $\frac{e_{\nu (i)}A}{e_{\nu (i)}J}[h]$, where $J=J^{gr}(A)$ and $h\in I$. In particular, we have $AS\subseteq\text{Soc}_{gr}(A_A)=\oplus_{j\in I}\text{Soc}_{gr}(e_jA)$. If we had $AS\not\subseteq S$, we  would have that $\text{Soc}_{gr}(e_jA)\cap AS\neq 0$, for some $j\neq i$. But we know that $\text{Soc}_{gr}(e_jA)\cong\frac{e_{\nu (j)}A}{e_{\nu (j)}J}[-h_j]$. It would follow that we have an isomorphism $\frac{e_{\nu (j)}A}{e_{\nu (j)}J}[-h_j]\cong \frac{e_{\nu (i)}A}{e_{\nu (i)}J}[h]$, for some $h\in H$. Taking projective covers in $\text{Gr}-A$, we then get that $e_{\nu (j)}A[-h_j]\cong e_{\nu (i)}A[h]$, for some $h\in H$. By Proposition \ref{prop.basic graded algebra}, we would get that $\nu (j)=\nu (i)$, which would give that $j=i$ and, hence,  a contradiction. Therefore we have $AS=S$ and, hence, $S$ is a two-sided ideal.

To end the proof of assertions 1 and 2, we use the fact that $\text{Soc}_{gr}({}_AA)$ is essential as a graded left ideal of $A$, because each $e_iA$ is finitely cogenerated injective (see Theorem \ref{teor.graded pseudo-Frobenius algebras} and Definition \ref{defi.pseudo-Frobenius graded algebras}). We then have a homogeneous  element $0\neq w\in S\cap\text{Soc}_{gr}({}_AA)$ such that $Aw$ is a graded-simple left $A$-module. Bearing in mind that $Se_{\nu (i)}=S$, we get that $Aw=\text{Soc}_{gr}(Ae_{\nu (i)})$. Therefore we have that $\text{Soc}_{gr}(Ae_{\nu (i)})\subseteq\text{Soc}_{gr}(e_iA)$. The bijective condition of $\nu $ then gives that $\text{Soc}_{gr}({}_AA)\subseteq\text{Soc}_{gr}(A_A)$. The opposite inclusion follows by symmetry.

3) In the example \ref{exem.pseudo-Frobenius}(4) below, the map $[-,-]:A\times A\longrightarrow K$ given by $[f,g]=(f,xg)$ is a graded Nakayama form of degree $m-1$ for $A$.
\end{proof}

The following result complements theorem \ref{teor.graded pseudo-Frobenius
 algebras} and gives a handy criterion, in the locally Noetherian
 case, for $A$ to be graded Quasi-Frobenius.

 \begin{cor} \label{cor.Frobenius=simple socle}
 Let $A=\oplus_{h\in H}A_h$ be a weakly basic graded algebra with  $(e_i)_{i\in I}$ as a weakly basic distinguished family of orthogonal idempotents. The following
 assertions are equivalent:

 \begin{enumerate}

\item $A$ is locally Noetherian and the following conditions hold:

\begin{enumerate}
 \item For each $i\in I$, $Ae_i$ and $e_iA$ have a simple essential
socle in $A-Gr$ and $Gr-A$, respectively \item There are bijective
maps $\nu ,\nu' :I\longrightarrow I$ such that
$\text{Soc}_{gr}(e_iA)\cong \frac{e_{\nu (i)}A}{e_{\nu
 (i)}J^{gr}(A)}[h_i]$ and  $\text{Soc}_{gr}(Ae_i)\cong
 \frac{Ae_{\nu '
 (i)}}{[J^{gr}(A)e_{\nu '
 (i)}}[h'_i]$, for certain $h_i,h'_i\in H$

\end{enumerate}

\item $A$ is a locally Noetherian pseudo-Frobenius graded algebra.

\item $A$ is graded
Quasi-Frobenius
 \end{enumerate}
 \end{cor}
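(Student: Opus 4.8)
The plan is to prove the chain $3)\Longrightarrow 1)\Longrightarrow 2)\Longrightarrow 3)$, using Theorem \ref{teor.graded pseudo-Frobenius algebras} as the main engine and Proposition \ref{prop.uniqueness of Nakayama permutation} for the socle bookkeeping. First I would observe that $3)\Longrightarrow 1)$ is immediate: by definition a locally Noetherian pseudo-Frobenius graded algebra satisfies condition 3 (or 4) of Theorem \ref{teor.graded pseudo-Frobenius algebras}, and under the locally Noetherian hypothesis that theorem gives condition 1, namely that $A-Gr$ and $Gr-A$ are Frobenius categories, which is precisely the definition of graded Quasi-Frobenius. For $1)\Longrightarrow 2)$, I would first recall that graded Quasi-Frobenius forces $A-Gr$ and $Gr-A$ to be Frobenius, hence by the implications already proved in Theorem \ref{teor.graded pseudo-Frobenius algebras} ($1)\Longrightarrow 2)\Longrightarrow 3)$), $A$ is in particular pseudo-Frobenius, so a graded Nakayama form exists with permutation $\nu$ and degree map $\mathbf h$; this yields $D(e_iA)\cong Ae_{\nu(i)}[h_i]$ and, dually, $D(Ae_i)\cong e_{\nu^{-1}(i)}A[h_{\nu^{-1}(i)}]$. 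It remains to check the locally Noetherian condition and the two socle statements (a) and (b). For local Noetherianity: in a Frobenius category each $Ae_i$ is injective, hence (since $A-Gr$ is a Grothendieck category with the $Ae_j[h]$ as projective generators) a direct sum of indecomposable injectives; combining injectivity of $Ae_i$ with the fact that, as in the proof of $3),4)\Longrightarrow 1)$ in Theorem \ref{teor.graded pseudo-Frobenius algebras}, the $D(e_jA)\cong Ae_{\nu(j)}[h_j]$ are Artinian, one gets that every $Ae_j$ has finite length, so in particular satisfies ACC on graded submodules; symmetrically for $e_jA$. Condition (a) then follows because a finite-length indecomposable injective has simple essential socle, and condition (b) is exactly the isomorphisms $\text{Soc}_{gr}(e_iA)\cong \frac{e_{\nu(i)}A}{e_{\nu(i)}J^{gr}(A)}[h_i]$ and $\text{Soc}_{gr}(Ae_i)\cong \frac{Ae_{\nu'(i)}}{J^{gr}(A)e_{\nu'(i)}}[h'_i]$ read off from the socle of a dual projective, exactly as computed in the proof of Proposition \ref{prop.uniqueness of Nakayama permutation}(1,2).

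The substantive implication, and the main obstacle, is $2)\Longrightarrow 3)$: from the purely structural data in 1)—local Noetherianity, simple essential socles, and the two socle isomorphisms—we must produce the pseudo-Frobenius property, i.e.\ verify condition 3 (or construct a Nakayama form, condition 4) of Theorem \ref{teor.graded pseudo-Frobenius algebras}. The idea is to show each $Ae_i$ is injective in $A-Gr$. Since $Ae_i$ has simple essential graded socle $\text{Soc}_{gr}(Ae_i)\cong\frac{Ae_{\nu'(i)}}{J^{gr}(A)e_{\nu'(i)}}[h'_i]=:S'_{\nu'(i)}[h'_i]$, its injective envelope $E(Ae_i)$ equals $E(S'_{\nu'(i)}[h'_i])$. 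On the other hand, by the duality $D:A\text{-}lfdgr\longleftrightarrow lfdgr\text{-}A$ applied to the right-hand socle isomorphism in (b), one identifies $E$ of a graded-simple left module with $D$ of the corresponding projective right module: concretely $E(S'_j)\cong D(e_{?}A)[?]$, and then the hypothesis that $e_jA$ (and its duals) are Noetherian/Artinian forces $D(e_jA)$ to have finite length with the right socle; comparing composition series and socles of $Ae_i$ and $E(Ae_i)$—both have the same simple socle and, via $D$ and the socle formulas, the same composition factors—one concludes $Ae_i=E(Ae_i)$ is injective. (This is the classical QF argument of Nakayama--Tachikawa transported to the graded enough-idempotents setting; the locally Noetherian hypothesis is what makes the length-counting legitimate.) Symmetrically $e_iA$ is injective in $Gr-A$. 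Once every $Ae_i$ and $e_iA$ is injective, $A-Gr$ and $Gr-A$ are Frobenius (direct sums of injectives are injective in a locally Noetherian Grothendieck category), so in particular condition 3 of Theorem \ref{teor.graded pseudo-Frobenius algebras} holds and $A$ is pseudo-Frobenius; together with local Noetherianity this is assertion 2, closing the cycle.

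I expect the delicate point to be making the "same composition factors, same simple socle, hence equal" step fully rigorous with enough idempotents and a possibly infinite index set $I$: one must work locally, restricting to the finite-support idempotents $e_F$ and using that $\text{Soc}_{gr}$, injective envelopes, and the duality $D$ all behave well under the transition $A\rightsquigarrow e_FAe_F$ (as already exploited in the proof of Proposition \ref{prop.graded Jacobson radical}), and then pass to the colimit. An alternative, perhaps cleaner, route for $2)\Longrightarrow 3)$ is to directly build a graded Nakayama form: the socle isomorphisms in (b) give, for each $i$, nondegenerate graded pairings $e_iAe_j\times e_jAe_{\nu(i)}\longrightarrow \text{Soc}_{gr}(e_iA)e_{\nu(i)}\cong K$ coming from multiplication $e_iA\otimes_A Ae_{\nu(i)}\to e_i\text{Soc}_{gr}(A)e_{\nu(i)}$, and one checks associativity $(ab,c)=(a,bc)$ and the bijectivity of $\nu$ from the hypotheses; this verifies condition 4 directly and then Theorem \ref{teor.graded pseudo-Frobenius algebras} gives everything else. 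Either way, the engine is Theorem \ref{teor.graded pseudo-Frobenius algebras}, and the only genuinely new work is checking that the abstract socle data in 1) forces the injectivity/Nakayama-form condition.
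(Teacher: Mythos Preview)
Your overall architecture matches the paper's, but there are two real problems.

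First, your labels are scrambled throughout. What you call ``$3)\Rightarrow 1)$'' is in fact the corollary's $2)\Rightarrow 3)$ (locally Noetherian pseudo-Frobenius $\Rightarrow$ Quasi-Frobenius); what you call ``$1)\Rightarrow 2)$'' is the corollary's $3)\Rightarrow 1)$ (Quasi-Frobenius $\Rightarrow$ locally Noetherian plus the socle data); and your ``substantive implication $2)\Rightarrow 3)$'' is the corollary's $1)\Rightarrow 2)$. The mathematics you describe is coherent once one fixes the relabeling, but as written it is confusing.

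Second, and more substantively, your argument for deducing local Noetherianity from the Quasi-Frobenius condition is circular. You invoke ``as in the proof of $3),4)\Rightarrow 1)$ in Theorem~\ref{teor.graded pseudo-Frobenius algebras}'' to conclude that $D(e_jA)\cong Ae_{\nu(j)}[h_j]$ is Artinian, and from this that $Ae_j$ has finite length. But that very passage in Theorem~\ref{teor.graded pseudo-Frobenius algebras} \emph{assumes} $A$ is locally Noetherian in order to conclude that $D(e_jA)$ is Artinian (Noetherian $\Rightarrow$ dual Artinian, not the other way). The paper handles this step differently: from the Frobenius property one knows that every countable direct sum of injective envelopes of simples is injective, and then one runs the classical argument (as in Kasch, Theorem 6.5.1) to force each $Ae_i$ Noetherian. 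You need an argument of that kind here.

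For the substantive implication (the corollary's $1)\Rightarrow 2)$), your sketch is essentially the paper's proof: one shows $D(e_iA)$ is Artinian (by duality from Noetherian), hence has a projective cover $Ae_j[h]\twoheadrightarrow D(e_iA)$, whence $D(e_iA)$ has finite length; then the injective envelope $\iota:e_iA\rightarrowtail E\cong D(Ae_j)[h]$ factors through the projective cover $p:e_kA[h']\twoheadrightarrow E$, giving a monomorphism $e_iA\rightarrowtail e_kA[h']$; comparing socles and using condition (b) plus weak basicness forces $i=k$, and a length count makes it an isomorphism. Your worry about passing to $e_FAe_F$ and taking colimits is unnecessary: everything happens inside modules that already have finite length once the first step is done. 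Your ``alternative route'' of building a Nakayama form directly from the socle pairings is plausible but is not what the paper does, and checking nondegeneracy of those pairings without first knowing injectivity is not obviously easier.
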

 \begin{proof}
$2)\Longrightarrow 3)$ is given by theorem \ref{teor.graded pseudo-Frobenius
 algebras}.

$3)\Longrightarrow 2)$ By hypothesis, in $A-Gr$  every countable direct sum of injective envelopes of simple objects is an injective object. Then one proves that each $Ae_i$ is a Noetherian object of $A-Gr$ as in the ungraded unital case (see the proof of \cite{Kasch}[Theorem 6.5.1]). By symmetry, one also gets that each $e_iA$ is a Noetherian object of $Gr-A$.

$2)\Longrightarrow 1)$ By  duality and the fact that the finitely generated projective and finitely cogenerated injective graded modules coincide, we know that $\text{Soc}_{gr}(e_iA)$ is a simple object of $lfdgr-A$ which is an essential graded submodule of $e_iA$. If $\nu$ is the Nakayama permutation, then, by proposition \ref{prop.uniqueness of Nakayama permutation},  we have that
$\text{Soc}_{gr}(e_iA)e_{\nu (i)}\neq 0$. This implies that $\text{Hom}_{Gr-A}(e_{\nu (i)}A[h],\text{Soc}_{gr}(e_iA))\neq 0$, for some $h\in H$. It follows that $\text{Soc}_{gr}(e_iA)\cong\frac{e_{\nu (i)}A}{e_{\nu (i)}J^{gr}(A)}[h]$.
A left-right symmetric argument proves that the map $\nu'=\nu^{-1}$ satisfies the condition in the statement.

 $1)\Longrightarrow 2)$  By definition of weakly  basic,  $A$ is locally
 finite dimensional, so that $Ae_i$ and $e_iA$ are locally finite
 dimensional modules, for all $i\in I$. It then follows by duality that $D(e_iA)$ is an Artinian object of $A-Gr$, for all $i\in
 I$. By the same reason, we get that $D(e_iA)$  has a
 unique simple essential quotient in $A-Gr$, meaning that $D(e_iA)$ has
 a unique maximal superfluous subobject in this category. By a classical argument, it
 follows that $D(e_iA)$  has a
 projective cover in $A-Gr$, which is an epimorphism
 of the form $p:Ae_j[h]\twoheadrightarrow D(e_iA)$. It  follows from
 this
 that $D(e_iA)$ is a Noetherian object, whence, an object of finite
 length in $A-Gr$ since it is a quotient of a Noetherian object.
 With this and its symmetric argument we get that all finitely
 cogenerated injective objects in $A-Gr$ and $Gr-A$ have finite
 length, which implies by duality that also the finitely generated
 projective objects have finite length.

 The fact that $\text{Soc}_{gr}(e_iA)$ is graded-simple implies that
 the injective envelope of $e_iA$ in $A-Gr$ is of the form $\iota :e_iA\rightarrowtail E\cong
 D(Ae_j)[h]$, while the projective cover of $E$ is of the form $p:e_kA[h']\twoheadrightarrow
 E$. Then $\iota$ factors through $p$ yielding a monomorphism $u:e_iA\rightarrowtail
 e_kA[h']$. But then the graded socles of $e_iA$ and $e_kA[h']$ are
 isomorphic. By condition 1.b) and the weakly basic condition, this implies that $i=k$.
 By comparison of graded composition lengths, we get that $u$ is an
 isomorphism, which in turn implies that both $p$ and $\iota$ are
 also isomorphisms. Therefore all the $e_iA$, and hence all finitely
 generated projective objects, are finitely cogenerated injective
 objects of $A-Gr$. The left-right symmetry of assertion 1 implies
 that the analogous fact is true in $Gr-A$. Then, applying duality,
 we get that the finitely generated projective objects and the
 finitely cogenerated injective objects coincide in $A-Gr$ and
 $Gr-A$.
\end{proof}

\subsection{Some examples}

 \begin{ejems} \label{exem.pseudo-Frobenius}
 The following are examples of graded pseudo-Frobenius algebras over a field
 $K$:

 \begin{enumerate}
 \item When $H=0$ and $A=\Lambda$ is a finite-dimensional
 self-injective algebra, which is equivalent to saying that
 $\Lambda$ is quasi-Frobenius.

\item The group algebra $KH$, with $(KH)_h=Kh$ for each $h\in H$, is  graded simple both as a left and as a right graded module over itself. It immediatetly follows that $KH$ is
graded Quasi-Frobenius algebra.

\item When $\Lambda$ is any finite-dimensional split basic algebra and $A=\hat{\Lambda}$ is its repetitive algebra,
  in the
 terminology of \cite{H}, then $A$ is a (trivially graded) quasi-Frobenius algebra with enough idempotents (see
 op.cit.[Chapter II]).

 \item Take the $\mathbb{Z}$-graded algebra $A=K[x,x^{-1},y,z]/(y^2,z^2)$,
 where
 $\text{deg}(x)=\text{deg}(y)=\text{deg}(z)=1$. Given any integer
 $m$, we have a canonical basis
 $\mathcal{B}_m=\{x^m,x^{m-1}y,x^{m-1}z,x^{m-2}yz\}$ of $A_m$. Consider the
 graded bilinear form $A\times A\longrightarrow K$ of degree $m$
 identified  by the fact that if $f\in A_n$ and $g\in A_{m-n}$, then
 $(f,g)$ is the coefficient of $x^{m-2}yz$ in the expression of $fg$
 as a $K$-linear combination of the elements of $\mathcal{B}_m$.
 Then $(-,-)$ is a graded Nakayama form for $A$, so that $A$ is
 graded pseudo-Frobenius.
 \end{enumerate}
 \end{ejems}

\subsection{The Nakayama automorphism and  construction of  Nakayama forms}

We remind the reader that if $\sigma$ and $\tau$ are graded automorphisms of $A$ (of zero degree), then we can form a graded $A-A-$bimodule ${}_\sigma A_\tau$ as follows. We take ${}_\sigma A_\tau =A$ as a graded $K$-vector space and we define $a\cdot x\cdot b=\sigma (a)x\tau (b)$, for all $a,x,b\in A$.

 \begin{cor} \label{cor.Nakayama automorphism}
 Let $A=\oplus_{h\in H}A_h$ be a  graded pseudo-Frobenius algebra, let $(e_i)_{i\in I}$ be
 a weakly basic
 distinguished family of orthogonal idempotents and let $D(A)=D({}_AA_A)$ the dual graded bimodule of ${}_AA_A$. The
 following assertions hold:

 \begin{enumerate}
  \item There is an automorphism
 of (ungraded) algebras
 $\eta:A\longrightarrow A$, which permutes the idempotents $e_i$ and maps homogeneous elements of $\bigcup_{i,j\in I}e_iAe_j$ onto homogeneous elements, such
 that $_1A_\eta$ is isomorphic to $D(A)$ as an ungraded
 $A$-bimodule. \item If the map $\mathbf{h}:I\longrightarrow H$ associated to the  Nakayama form $(-,-):A\times A\longrightarrow
 K$ takes constant value $h$, then $\eta$ can be chosen to be graded and such that
 $D(A)$ is isomorphic to $_1A_\eta [h]$ as graded $A$-bimodules.
 \end{enumerate}
 \end{cor}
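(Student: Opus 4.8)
The plan is to extract the Nakayama automorphism directly from a fixed graded Nakayama form $(-,-):A\times A\longrightarrow K$ and its associated data $\nu:I\longrightarrow I$, $\mathbf{h}:I\longrightarrow H$. Recall from Proposition \ref{prop.the functor D over algebras with s.i.} that $D(A)=D({}_AA_A)$, and from the proof of $4)\Longrightarrow 3)$ in Theorem \ref{teor.graded pseudo-Frobenius algebras} that the form induces isomorphisms $f_i:Ae_{\nu(i)}[h_i]\stackrel{\cong}{\longrightarrow}D(e_iA)$ of graded left $A$-modules, with $f_i(b)=(-,b)$. Since $D(A_A)=\oplus_{i\in I}D(e_iA)$ (this is the content of the argument for $3),4)\Longrightarrow 2)$ in the locally bounded case, but in fact it holds here because $D(A_A)=D({}_AA_A)$ whenever $A$ is weakly basic locally finite dimensional, by Remark \ref{rem.dual of bimodules and modules}), assembling the $f_i$ gives an isomorphism of ungraded left $A$-modules $f:\bigoplus_{i\in I}Ae_{\nu(i)}\stackrel{\cong}{\longrightarrow}D(A)$, namely $f(b)=(-,b)$ for $b\in Ae_{\nu(i)}$. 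Because $\nu$ is a bijection of $I$, the source is isomorphic to ${}_AA$ as an ungraded left module, so $D(A)\cong{}_AA$ on the left.

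Next I would pin down the right $A$-module structure transported along $f$. For $x\in A$, the action of $x$ on the functional $(-,b)\in D(A)$ on the right is $(-,b)\cdot x:a\rightsquigarrow(-,b)(xa)=(xa,b)=(a,bx)$ by axiom (1) of the Nakayama form; hence $(-,b)\cdot x=(-,bx)$. So on the source $\bigoplus_i Ae_{\nu(i)}\cong A$ the transported right action is just ordinary right multiplication, provided we are careful about how the index $\nu(i)$ moves: $bx$ for $b\in Ae_{\nu(i)}$ lands in $Ae_{\nu(j)}$ for the various $j$ with $\nu(j)$ in the support of $e_{\nu(i)}x$. To make this into a clean bimodule statement I would fix, once and for all, a $K$-linear bijection $\phi:A\longrightarrow\bigoplus_{i\in I}Ae_{\nu(i)}$ which, on each block, is induced by an isomorphism $Ae_i\cong Ae_{\nu(i)}$ realizing the permutation $\nu$ of idempotents; concretely, pick for each $i$ a homogeneous generator of $\mathrm{Soc}_{gr}(Ae_{\nu(i)})$ — by Proposition \ref{prop.uniqueness of Nakayama permutation}(1,2) this socle is $\cong S_i$ up to shift — and let $\phi|_{Ae_i}$ be the unique $A$-linear lift of $Ae_i\twoheadrightarrow S_i\hookrightarrow\mathrm{Soc}_{gr}(Ae_{\nu(i)})$. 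Then $f\circ\phi:A\longrightarrow D(A)$ is an isomorphism of left $A$-modules, and composing its square (source and target being $A$) with the two right-module structures produces the desired ungraded algebra automorphism $\eta:A\longrightarrow A$, determined by $f(\phi(a\eta(x)))=f(\phi(a))\cdot x=f(\phi(a)x)$. Unwinding, $\eta(x)$ is characterized by $\phi(a\eta(x))=\phi(a)x$ for all $a$, i.e.\ $\eta$ is conjugation of right multiplication by the left-module isomorphism $\phi$; this is an algebra automorphism because $\phi$ is left $A$-linear and bijective. That $\eta$ permutes the $e_i$ and sends homogeneous elements of $\bigcup_{i,j}e_iAe_j$ to homogeneous elements follows since each $\phi|_{Ae_i}$ is homogeneous (up to the shift $h_i - h_{\nu(i)}$) and matches $e_i$ to $e_{\nu(i)}$ up to the permutation; and by construction ${}_1A_\eta\cong D(A)$ as ungraded bimodules via $f\circ\phi$.

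For assertion (2), suppose $\mathbf{h}$ is constant of value $h$. Then every $f_i:Ae_{\nu(i)}[h]\stackrel{\cong}{\longrightarrow}D(e_iA)$ is graded of the \emph{same} shift $h$, so $f$ upgrades to a graded isomorphism $\bigoplus_i Ae_{\nu(i)}[h]\stackrel{\cong}{\longrightarrow}D(A)$, i.e.\ $\bigl(\bigoplus_i Ae_{\nu(i)}\bigr)[h]\stackrel{\cong}{\longrightarrow}D(A)$. Moreover with $h_i\equiv h$ the socle of $Ae_{\nu(i)}$ sits in degree matching $S_i$ up to the single uniform shift, so $\phi$ can be chosen graded of degree $0$ as a map $A\longrightarrow\bigoplus_i Ae_{\nu(i)}$; composing, $\eta$ is then a graded automorphism of degree $0$ and ${}_1A_\eta[h]\cong D(A)$ as graded bimodules.

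The main obstacle is the bookkeeping in the second paragraph: one must verify that the right-module structure transported through $f$ really is honest right multiplication on $A$ \emph{after} the permutation-correcting identification $\phi$, and that $\phi$ can be chosen left $A$-linear, bijective, \emph{and} idempotent-permuting simultaneously — this is where the weakly basic hypothesis and Proposition \ref{prop.uniqueness of Nakayama permutation}(1) (equality of left and right graded socles, so that $\mathrm{Soc}_{gr}(Ae_{\nu(i)})$ is a one-dimensional-up-to-shift graded-simple submodule generating a legitimate lift) are essential. Once $\phi$ is in hand, the identity $(a,bx)=(xa,b)$ makes the automorphism property of $\eta$ and the bimodule isomorphism ${}_1A_\eta\cong D(A)$ essentially formal.
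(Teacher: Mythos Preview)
Your argument breaks at two concrete points.

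First, the computation of the transported right action is wrong. You write $((-,b)\cdot x)(a)=(xa,b)=(a,bx)$, but axiom (1) of the Nakayama form says $(ab,c)=(a,bc)$; applying it to $(xa,b)$ gives $(xa,b)=(x,ab)$, not $(a,bx)$. So the right action pulled back through $b\mapsto(-,b)$ is \emph{not} ordinary right multiplication, and there is no reason to expect it to be: the whole point is that the twist $\eta$ measures exactly this discrepancy.

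Second, your map $\phi$ cannot exist as described. You ask for $\phi|_{Ae_i}:Ae_i\stackrel{\cong}{\longrightarrow}Ae_{\nu(i)}$ to be a left $A$-module isomorphism, but Proposition \ref{prop.basic graded algebra}(2) says that in a weakly basic algebra $Ae_i\cong Ae_j$ in $A-Gr$ forces $i=j$. So unless $\nu=\mathrm{id}$ this is impossible. The ``lift'' you sketch, $Ae_i\twoheadrightarrow S_i\hookrightarrow\mathrm{Soc}_{gr}(Ae_{\nu(i)})\subset Ae_{\nu(i)}$, is a perfectly good $A$-linear map but has kernel $J^{gr}(A)e_i$ and image the (graded-simple) socle; it is neither injective nor surjective.

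The paper avoids both problems by using the Nakayama form symmetrically. The map $b\mapsto(-,b)$ is a left-module isomorphism $A\stackrel{\cong}{\longrightarrow}D(A)$ (no need for any $\phi$: $\bigoplus_i Ae_{\nu(i)}$ is literally $A$ since $\nu$ is a bijection), and the map $a\mapsto(a,-)$ is a \emph{right}-module isomorphism $A\stackrel{\cong}{\longrightarrow}D(A)$. One then defines $\eta$ by the equation $(a,-)=(-,\eta(a))$, i.e.\ $\eta$ is the composite of one isomorphism with the inverse of the other. The algebra-automorphism property follows from the one-line computation $(x,\eta(ab))=(ab,x)=(a,bx)=(bx,\eta(a))=(b,x\eta(a))=(x,\eta(a)\eta(b))$, and the bimodule isomorphism ${}_1A_\eta\cong D(A)$ via $b\mapsto(-,b)$ follows from $(a,b\eta(x))=(ab,\eta(x))=(x,ab)=(xa,b)$. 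The homogeneity and idempotent-permuting properties of $\eta$ then drop out by tracking degrees through the form, with $\eta(e_iA_he_j)\subset e_{\nu(i)}A_{h+h_j-h_i}e_{\nu(j)}$.
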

 \begin{proof}
 By definition, $D(A)$ consists of the linear forms $f:A\longrightarrow K$ such that $f(e_iA_he_j)=0$, for all but finitely many triples $(i,j,h)\in I\times I\times H$. On the other hand, the left $A$-module $\oplus_{i\in I}D(e_iA)$ can be identified with the space of linear forms $g:A=\oplus_{i\in I}e_iA\longrightarrow K$ such that $g(e_iA)=0$, for almost all $i\in I$, and fixed any $i\in J$, the set of pairs $(j,h)\in I\times H$ such that $g(e_iA_he_j)\neq 0$ is finite. It follows that, when viewed as $K$-subspaces of $\text{Hom}_K(A,K)$, we have $D(A)=\oplus_{i\in I}D(e_iA)$ and, by a left-right symmetric argument, we also have $D(A)=\oplus_{i\in I}D(Ae_i)$.

 1) Let us fix a graded Nakayama form $(-,-):A\times A\longrightarrow
 K$ and associated maps $\nu :I\longrightarrow I$ and $\mathbf{h} :I\longrightarrow
 H$. The assignment $b\rightsquigarrow (-,b)$ gives an isomorphism
 of graded  $A$-modules
 $Ae_{\nu (i)}[h_i]\stackrel{\cong}{\longrightarrow}D(e_iA)$, where $h_i=\mathbf{h}(i)$, for
 each $i\in I$. By taking the direct sum of all these maps, we get
 an isomorphism of ungraded left $A$-modules $_AA\longrightarrow\oplus_{i\in
 I}D(e_iA)=D(A)$.
 Therefore the assignment  $b\rightsquigarrow (-,b)$  actually gives
 an isomorphism ${}_AA\stackrel{\cong}{\longrightarrow}{}_AD(A)$.
 Symmetrically, the assignment $a\rightsquigarrow (a,-)$ gives an
 isomorphism $A_A\stackrel{\cong}{\longrightarrow}D(A)_A$. It then
 follows that, given $a\in A$, there is a unique $\eta (a)\in A$
 such that $(a,-)=(-,\eta (a))$. This gives a $K$-linear map $\eta :A\longrightarrow
 A$ which, by its own definition, is bijective. Moreover, given $a,b,x\in
 A$, we get

 \begin{center}
 $(x,\eta (ab))=(ab,x)=(a,bx)=(bx,\eta (a))=(b,x\eta (a))=(x\eta (a),\eta (b)=(x,\eta (a)\eta
 (b))$,
 \end{center}
 which shows that $\eta (ab)=\eta (a)\eta (b)$, for all $a,b\in A$.
 Therefore $\eta$ is an automorphism of $A$ as an ungraded algebra.
 Moreover if $0\neq a\in e_iAe_j$, then $(a,-)$ vanishes on all
 $e_{i'}Ae_{j'}$ except in $e_jAe_{\nu (i)}$. Therefore
 $(-,\eta (a))$ does the same. By definition of the Nakayama form,
 we necessarily have $\eta (a)\in e_{\nu (i)}Ae_{\nu (j)}$. We claim
 that if $a\in e_iAe_j$ is an element of degree $h$, then  $\eta (a)$ is an element of degree $h+h_j-h_i$. Indeed,
 let $h'\in H$ be such that $\eta (a)_{h'}\neq 0$. Then  $(-,\eta(a)_{h'}):e_jAe_{\nu (i)}\longrightarrow
 K$ is a nonzero linear form which vanishes on $e_jA_ke_{\nu (i)}$, for all
 $k\neq h_j-h'$. Let us pick up $x\in e_jA_{h_j-h'}e_{\nu (i)}$ such that $(x,\eta (a)_{h'})\neq
 0$. Then we have that $(x,\eta (a))=(x,\eta (a)_{h'})\neq 0$, due
 to the fact that $(-,-):e_jAe_{\nu (i)}\times e_{\nu (i)}Ae_{\nu (j)}\longrightarrow
 K$ is a graded bilinear form of degree $h_j$. We then get that $0\neq (x,\eta
 (a))=(a,x)$, which implies that $h+(h_j-h')=h_i$, and hence that $h'=h+(h_j-h_i)$. Then $h'$ is uniquely determined by
 $a$, so that $\eta (a)$ is homogeneous of degree $h+h_j-h_i$ as
 desired.

 Putting $a=e_i$ in the previous paragraph, we get that $\eta (e_i)\in e_{\nu (i)}Ae_{\nu (i)}$ has degree $0$, and
 then $\eta (e_i)$ is an idempotent element of the local algebra
 $e_{\nu (i)}A_0e_{\nu (i)}$. It follows that $\eta (e_i)=e_{\nu (i)}$, for each $i\in I$.

 Finally, we consider the $K$-linear isomorphism $f:A\longrightarrow
 D(A)$ which maps $b\rightsquigarrow (-,b)=(\eta^{-1}(b),-)$. We
 readily see that $f$ is a homomorphism of left $A$-modules.
 Moreover, we have equalities

 \begin{center}
 $(a,b\eta (b'))=(ab,\eta (b'))=(b',ab)=(b'a,b)=[f(b)b'](a)$,
 \end{center}
 which shows that $f$ is a homomorphism of right $A$-modules $A_\eta\longrightarrow$

 $D(A)$. Then $f$ is an isomorphism
 $_1A_\eta\stackrel{\cong}{\longrightarrow}D(A)$.

 2) The proof of assertion 1 shows that if $\mathbf{h}(i)=h$, for all $i\in
 I$, then $\eta$ is a graded automorphism of degree $0$. Moreover,
 the isomorphism $f:_1A_\eta\stackrel{\cong}{\longrightarrow}D(A)$ is the direct sum of the isomorphisms
 of graded left $A$-modules $f_i:Ae_{\nu
 (i)}[h]\stackrel{\cong}{\longrightarrow}D(e_iA)$ which map $b\rightsquigarrow (-,b)$. It
 then follows that $f$ is an isomorphism of graded bimodules
 $_1A_\eta [h]\stackrel{\cong}{\longrightarrow}D(A)$.
\end{proof}

To finish this subsection, we will see that if one knows that $A$ is
split graded pseudo-Frobenius, then all possible graded Nakayama
forms for $A$ come in similar way. Recall that if $V=\oplus_{h\in
H}V_h$ is a graded vector space, then its \emph{support}, denoted
$\text{Supp}(V)$,  is the set of $h\in H$ such that $V_h\neq 0$.

\begin{prop} \label{prop.graded Nak-form via basis}
Let $A$ be a split pseudo-Frobenius graded algebra and $(e_i)_{i\in
I}$ a weakly basic distinguished family of orthogonal idempotents.
The following assertions hold:

\begin{enumerate}
\item If $h_i\in \text{Supp}(e_i\text{Soc}_{gr}(A))$, then
$\text{dim}(e_i\text{Soc}_{gr}(A))_{h_i})=1$ \item For a bilinear
form $(-,-):A\times A\longrightarrow K$,  the following statements
are equivalent:

\begin{enumerate}
\item $(-,-)$ is a graded Nakayama form for $A$ \item There exist
an element $\mathbf{h}=(h_i)\in\prod_{i\in
I}\text{Supp}(e_i\text{Soc}_{gr}(A))$ and a basis $\mathcal{B}_i$ of
$e_iA_{h_i}e_{\nu (i)}$, for each $i\in I$, such that:

\begin{enumerate}
\item $\mathcal{B}_i$ contains a (unique) element $w_i$ of
$e_i\text{Soc}_{gr}(A)_{h_i}$ \item If $a,b\in\bigcup_{i,j}e_iAe_j$
are homogeneous elements, then $(e_iA_h,A_ke_j)=0$ unless $j=\nu
(i)$ and $h+k=h_i$ \item If $(a,b)\in e_iA_h\times A_{h_i-h}e_{\nu
(i)}$, then $(a,b)$ is the coefficient of $w_i$  in the expression
of $ab$ as a linear combination of the elements of $\mathcal{B}_i$.
\end{enumerate}
\end{enumerate}
\end{enumerate}
\end{prop}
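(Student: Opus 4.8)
The plan is to prove part (1) first and then use it repeatedly in part (2). For (1), fix $i\in I$ and $h_i\in\operatorname{Supp}(e_i\operatorname{Soc}_{gr}(A))$. By Proposition \ref{prop.uniqueness of Nakayama permutation} we know $\operatorname{Soc}_{gr}(e_iA)$ is a graded-simple right $A$-module, isomorphic to $\frac{e_{\nu(i)}A}{e_{\nu(i)}J^{gr}(A)}[h_i']$ for some $h_i'\in H$, and $e_i\operatorname{Soc}_{gr}(A)=\operatorname{Soc}_{gr}(e_iA)$ equals $\operatorname{Soc}_{gr}(Ae_{\nu(i)})$ (since $\operatorname{Soc}_{gr}(A)$ is a two-sided ideal annihilated on both sides by the radical and concentrated in the $(i,\nu(i))$-block). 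So $e_i\operatorname{Soc}_{gr}(A)$ is, up to a shift, the graded-simple module $\frac{e_{\nu(i)}A}{e_{\nu(i)}J^{gr}(A)}$, and its homogeneous component in the degree $h_i$ where it is nonzero is $e_i\operatorname{Soc}_{gr}(A)e_{\nu(i)}$ in that degree. Since $A$ is split, $e_{\nu(i)}A_0e_{\nu(i)}/e_{\nu(i)}J(A_0)e_{\nu(i)}\cong K$, so the top of the graded-simple module is one-dimensional; hence every nonzero homogeneous component of a graded-simple module is one-dimensional, giving $\dim(e_i\operatorname{Soc}_{gr}(A))_{h_i}=1$.

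For (2), the implication (b)$\Rightarrow$(a) is the easier direction: given $\mathbf{h}$ and the bases $\mathcal{B}_i$ with the distinguished socle elements $w_i$, define $(-,-)$ by condition (iii), extended by (ii) to be zero off the $e_iA\times Ae_{\nu(i)}$ blocks. Associativity $(ab,c)=(a,bc)$ follows because the coefficient of $w_i$ in $(ab)c=a(bc)$ is the same computed either way (using that $w_i$ spans $\operatorname{Soc}_{gr}(e_iA)$ and that multiplication lands inside this one-dimensional space). Nondegeneracy of the induced form $e_iAe_j\times e_jAe_{\nu(i)}\to K$ of degree $h_i$: the pairing $e_iA_h\times A_{h_i-h}e_{\nu(i)}\to K$, $(a,b)\mapsto$ coeff of $w_i$ in $ab$, is nondegenerate precisely because $e_iA$ is finitely cogenerated with simple essential socle $Kw_i$, so the right annihilator of $a\neq 0$ cannot be all of $A_{h_i-h}e_{\nu(i)}$ unless $aA$ misses the socle — impossible for $a\neq 0$ — and symmetrically; this exhibits $Ae_{\nu(i)}[h_i]\cong D(e_iA)$ concretely. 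Thus $(-,-)$ is a graded Nakayama form with Nakayama permutation $\nu$ and degree map $\mathbf{h}$.

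For (a)$\Rightarrow$(b), start from an arbitrary graded Nakayama form $(-,-)$ with associated $\nu$ and $\mathbf{h}=(h_i)$. By Definition \ref{defi.graded Nakayama form}(3) the restriction $e_iAe_j\times e_jAe_{\nu(i)}\to K$ is nondegenerate graded of degree $h_i$; taking $j=i$, the pairing $e_iA_0e_i\times e_iA_{h_i}e_{\nu(i)}\to K$ is nondegenerate, and since $e_iA_0e_i$ is local with $e_i\notin e_iJ(A_0)e_i$, pairing against $e_i$ shows $(e_iA_{h_i}e_{\nu(i)})$ is nonzero and in fact that the restriction to $e_i\cdot(-):e_iA_{h_i}e_{\nu(i)}\to K$ is a nonzero functional; moreover condition (2) of the Nakayama form forces $(e_iA,Ae_{\nu(i)})\neq 0$ to be "detected" in the socle. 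Concretely: the form identifies $Ae_{\nu(i)}[h_i]\cong D(e_iA)$, so the graded socle $\operatorname{Soc}_{gr}(Ae_{\nu(i)})$ sits in degree $h_i$ (after the shift), which forces $h_i\in\operatorname{Supp}(e_i\operatorname{Soc}_{gr}(A))$. Then pick any basis of $e_iA_{h_i}e_{\nu(i)}$ whose last vector $w_i$ spans the one-dimensional space $(e_i\operatorname{Soc}_{gr}(A))_{h_i}$ from part (1). Condition (ii) is just a restatement of Definition \ref{defi.graded Nakayama form}(2)–(3). For condition (iii), observe that for $a\in e_iA_h$ and $b\in A_{h_i-h}e_{\nu(i)}$, the product $ab\in e_iA_{h_i}e_{\nu(i)}$, and one checks that the functional $b\mapsto(a,b)$ on $e_iA_{h_i}e_{\nu(i)}$, when composed with the identification $e_iA_{h_i}e_{\nu(i)}\cong (e_i\operatorname{Soc}_{gr}(A))_{h_i}^{\,}\oplus(\text{rest})$ given by the chosen basis, reads off exactly the $w_i$-coefficient — this uses that $a\mapsto(a,-)$ is the concrete form of the iso $e_iA\cong D(Ae_{\nu(i)})[-h_i]$ and that the dual of the socle inclusion is the projection onto the top. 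The main obstacle is precisely this last verification: pinning down, in the split case, that the given associative nondegenerate form must literally be "extract the coefficient of the socle generator" rather than some twisted version of it. The key is that $\operatorname{Soc}_{gr}(e_iA)$ is one-dimensional in degree $h_i$ by part (1), so there is essentially no room for a twist — any associative form with the right Nakayama data restricts on $e_iA_h\times A_{h_i-h}e_{\nu(i)}$ to a scalar multiple of the coefficient functional, and by rescaling each $\mathcal{B}_i$ (equivalently, choosing $w_i$ appropriately) that scalar is absorbed. I would carry this out by fixing the basis last, i.e., after the form, and defining $w_i$ to be the unique socle generator for which the coefficient functional agrees with $(-,-)$ on the nose; one then checks this $w_i$ is independent of $h$ via the associativity axiom, completing (b).
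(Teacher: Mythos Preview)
Your treatment of part (1) and of the implication (b)$\Rightarrow$(a) is essentially the paper's argument, and is fine. The gap is in (a)$\Rightarrow$(b).

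You propose to ``pick any basis of $e_iA_{h_i}e_{\nu(i)}$ whose last vector $w_i$ spans the one-dimensional socle'' and then verify (iii). But condition (iii) is a statement about the \emph{entire} basis $\mathcal{B}_i$, not just about $w_i$: the ``coefficient of $w_i$'' functional changes when you change the other basis vectors. So the claim that the form is ``a scalar multiple of the coefficient functional'' is not well-posed until the full basis is fixed, and for a generic choice of complement to $Kw_i$ it is simply false. Your rescaling-of-$w_i$ fix cannot repair this, and the final sentence (choose $w_i$ so that the coefficient functional agrees with $(-,-)$) is circular for the same reason.

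The paper resolves this by going in the opposite order. It first fixes a basis $\mathcal{B}^0$ of $e_iA_0e_i$ containing $e_i$ with the rest in $e_iJ(A_0)e_i$, then \emph{defines} $\mathcal{B}_i$ to be the right-dual basis of $\mathcal{B}^0$ with respect to the nondegenerate pairing $e_iA_0e_i\times e_iA_{h_i}e_{\nu(i)}\to K$, and sets $w_i:=e_i^{*}$. With this choice one has $(e_i,c)=\delta_{c,w_i}$ for $c\in\mathcal{B}_i$, so $(a,b)=(e_i,ab)$ is literally the $w_i$-coefficient, giving (iii) immediately. The remaining work is to show $w_i\in e_i\operatorname{Soc}_{gr}(A)$: if some homogeneous $a\in J^{gr}(A)$ had $aw_i\neq 0$, nondegeneracy would produce $b$ with $(ba,w_i)\neq 0$; but $ba\in e_iJ(A_0)e_i$ is a combination of elements of $\mathcal{B}^0\setminus\{e_i\}$, all of which pair to zero with $w_i=e_i^{*}$, a contradiction.

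Your approach can be salvaged, but not by rescaling alone: you would need to first prove directly that $(e_i,w)\neq 0$ for any nonzero $w\in e_i\operatorname{Soc}_{gr}(A)_{h_i}$ (write $c=\lambda e_i+c'$ with $c'\in e_iJ(A_0)e_i$; then $(c,w)=\lambda(e_i,w)$ since $c'w=0$, so $(e_i,w)=0$ would force $(-,w)\equiv 0$, contradicting nondegeneracy), and then \emph{choose} the remaining basis vectors of $\mathcal{B}_i$ to span $\ker\big((e_i,-)\big)\cap e_iA_{h_i}e_{\nu(i)}$. That is exactly the dual-basis construction in disguise.
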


\begin{proof}

1) Let us fix $h_i\in\text{Supp}(e_i\text{Soc}^{gr}(A)))$ and
suppose that $\{x,y\}$ is a linearly independent subset of
$e_i\text{Soc}^{gr}(A))_{h_i}$. We then have $xA=yA$ since
$e_i\text{Soc}^{gr}(A))$ is graded-simple. We get from this that
also  $xA_0=yA_0$. By proposition \ref{prop.basic graded algebra},
we know that  $J(A_0)=J^{gr}(A)_0$   and the split hypothesis on $A$
implies that $A_0=J(A_0)\oplus (\oplus_{j\in I}Ke_j)$. It follows
that $Kx=x(\oplus_{j\in I}Ke_j)=xA_0=yA_0=y(\oplus_{j\in
I}Ke_j)=Ky$, which contradicts the linear independence of $\{x,y\}$.

2) $b)\Longrightarrow a)$ By Proposition \ref{prop.uniqueness of Nakayama permutation}, the Nakayama permutation
is completely determined by $A$. The given element $\mathbf{h}$ is
then interpreted as a map $I\longrightarrow H$, which will be our
degree function. Moreover, we claim that the induced graded bilinear form $e_iAe_j\times e_jAe_{\nu (i)}\longrightarrow K$ is nondegenerate. Indeed, let $0\neq a\in e_iA_he_j$ be a homogeneous element.  Then $aA\cap e_i\text{Soc}_{gr}(A)\neq 0$ since $e_i\text{Soc}_{gr}(A)=\text{Soc}_{gr}(e_iA)$ is essential in $e_iA$. Taking $b\in e_jA_{h_i-h}e_{\nu (i)}$ homogeneous such that $ab\in e_i\text{Soc}_{gr}(A)$, from conditions b.i and b.iii we derive that $(a,b)\neq 0$.

It only remains to check that $(ab,c)=(a,bc)$, for
all $a,b,c$. This easily reduces to the case when $a,b,c$ are
homogeneous and there are indices $i,j,k$ such that $a=e_iae_j$,
$b=e_jbe_k$ and $c=e_kce_{\nu (i)}$. But in this case, we have
$(a,bc)=(ab,c)=0$ when
$\text{deg}(a)+\text{deg}(b)+\text{deg}(c)\neq h_i$. On the other
hand, by condition b.iii), if
$\text{deg}(a)+\text{deg}(b)+\text{deg}(c)=h_i$ then $(ab,c)$ and
$(a,bc)$ are both the coefficient of $w_i$ in the expression of
$abc$ as linear combination of the elements of $\mathcal{B}_i$. So
the equality $(ab,c)=(a,bc)$ holds, for all $a,b,c\in A$.

$a)\Longrightarrow b)$ We first take a basis $\mathcal{B}^0$ of
$A_0$ such that $\mathcal{B}^0=\{e_i:$ $i\in I\}\cup
(\mathcal{B}^0\cap J(A_0))$ and
$\mathcal{B}^0\subseteq\bigcup_{i,j\in I}e_iA_0e_j$. The graded
Nakayama form gives by restriction a nondegenerate bilinear map

\begin{center}
$(-,-):e_iA_0e_i\times e_iA_{h_i}e_{\nu (i)}\longrightarrow K$.
\end{center}
We choose as $\mathcal{B}_i$ the basis of $e_iA_{h_i}e_{\nu (i)}$
which is right orthogonal to $e_i\mathcal{B}^0e_i$ with respect to
this form. As usual, if $b\in e_i\mathcal{B}^0e_i$, we denote by
$b^*$ the element of $\mathcal{B}_i$ such that
$(c,b^*)=\delta_{bc}$, where $\delta_{bc}$ is the Kronecker symbol.
We then claim that $w_i:=e_i^*$ is in $e_i\text{Soc}_{gr}(A)$. This
will imply that $h_i\in\text{Supp}(\text{Soc}_{gr}(A))$ and, due to
assertion 1, we will get also that $w_i$ is the only element of
$e_i\text{Soc}_{gr}(A)_{h_i}$ in $\mathcal{B}_i$. Indeed suppose
that $w_i\not\in e_i\text{Soc}_{gr}(A)$. We then have $a\in
J^{gr}(A)$ such that $aw_i\neq 0$. Without loss of generality, we
assume that $a$ is homogeneous and that $a=e_jae_i$, for some $j\in
I$. Then $0\neq aw_i\in e_jAe_{\nu (i)}$, which implies the
existence of a homogeneous element $b\in e_iAe_j$ such that
$(b,aw_i)\neq 0$ since the induced graded bilinear form
$e_iAe_j\times e_jAe_{\nu (i)}\longrightarrow K$ is nondegenerate.
But then we have $(ba,w_i)\neq 0$ and $\text{deg}(ba)=0$ since the
induced graded bilinear form $e_iAe_i\times e_iAe_{\nu
(i)}\longrightarrow K$ is of degree $h_i$. But  $ba\in
e_iJ^{gr}(A)_0e_i=e_iJ(A_0)e_i$ and, by the choice of the basis
$\mathcal{B}^0$, each element of $e_iJ(A_0)e_i$ is a linear
combination of the elements in $\mathcal{B}^0\cap e_iJ(A_0)e_i$. By
the  choice of $w_i$, we have $(c,w_i)=0$, for all $c\in
\mathcal{B}^0\cap e_iJ(A_0)e_i$. It then follows that $(ba,w_i)=0$,
which is a contradiction.

It is now clear that  conditions b.i and b.ii hold. In order to
prove b.iii, take $(a,b)\in e_iA_h\times A_{h_i-h}e_{\nu (i)}$. We
then have $(a,b)=(e_i,ab)$, where $ab\in e_iA_{h_i}e_{\nu (i)}$. Put
$ab=\sum_{c\in\mathcal{B}_i}\lambda_cc$, where $\lambda_c\in K$ for
each $c\in \mathcal{B}_i$. We then get
$(a,b)=(e_i,\sum_{c}\lambda_cc)=\sum_{c}\lambda_c(e_i,c)=\lambda_{w_i}$,
i.e., $(a,b)$ is the coefficient of $w_i$ in the expression
$ab=\sum_{c}\lambda_cc$.
\end{proof}

\begin{defi} \label{defi.graded Nakayama form associated to basis}
Let $A=\oplus_{h\in H}A_h$ be a split pseudo-Frobenius graded
algebra, with $(e_i)_{i\in I}$ as weakly basic distinguished family
of idempotents and $\nu :I\longrightarrow I$ as Nakayama
permutation. Given a pair $(\mathcal{B},\mathbf{h})$ consisting of
an element $\mathbf{h}=(h_i)_{i\in I}$ of $\prod_{i\in
I}\text{Supp}(e_i\text{Soc}_{gr}(A))$ and a family
$\mathcal{B}=(\mathcal{B}_i)_{i\in I}$, where $\mathcal{B}_i$ is a
basis of $e_iA_{h_i}e_{\nu (i)}$ containing an element of
$e_iSoc_{gr}(A)$,  for each $i\in I$, we call \emph{graded Nakayama
form associated to $(\mathcal{B},\mathbf{h})$} to the bilinear form
$(-,-):A\times A\longrightarrow K$ determined by the conditions b.ii
and b.iii of last proposition. When $\mathbf{h}$ is constant, i.e.
there is $h\in H$ such that $h_i=h$ for all $i\in I$,  we will call
$(-,-)$ the graded Nakayama form of $A$ of degree $h$ associated to
$\mathcal{B}$.
\end{defi}

\subsection{Graded algebras given by quivers and relations}
\label{section.Graded quivers and relations}

Recall that a \emph{quiver} or \emph{oriented graph} is a quadruple
$Q=(Q_0,Q_1,i,t)$, where $Q_0$ and $Q_1$ are sets, whose elements
are called \emph{vertices} and \emph{arrows} respectively, and
$i,t:Q_1\longrightarrow Q_0$ are maps. If $a\in Q_1$ then $i(a)$ and
$t(a)$ are called the \emph{origin} (or $\emph{initial vertex}$) and
the \emph{terminus} of $a$.

Given a quiver $Q$, a \text{path} in $Q$ is a concatenation of
arrows $p=a_1a_2...a_r$ such that $t(a_k)=i(a_{k+1})$, for all
$k=1,...,r$. In such case, we put $i(p)=i(a_1)$ and $t(p)=t(a_r)$
and call them the origin and terminus of $p$. The number $r$ is the
\emph{length} of $p$ and we view the vertices of $Q$ as paths of
length $0$. The \emph{path algebra} of $Q$, denoted by $KQ$,  is the
$K$-vector space with basis the set of paths, where the
multiplication extends by $K$-linearity the multiplication of paths.
This multiplication is defined as $pq=0$, when $t(p)\neq i(q)$, and
$pq$ is the obvious concatenation path, when $t(p)=i(q)$. The
algebra $KQ$ is an algebra with enough idempotents, where $Q_0$ is
a distinguished family of orthogonal idempotents. If $i\in Q_0$ is a
vertex, we will write it as $e_i$ when we view it as an element of
$KQ$.

\begin{defi} \label{defi.graded quiver}
Let $H$ be an abelian group. An \emph{(H-)graded quiver} is a pair
$(Q,\text{deg})$, where $Q$ is a quiver and
$\text{deg}:Q_1\longrightarrow H$ is a map, called the \emph{degree
or weight function}. $(Q,\text{deg})$ will be called \emph{locally
finite dimensional} when, for each $(i,j,h)\in Q_0\times Q_0\times
H$, the set of arrows $a$ such that
$(i(a),t(a),\text{deg}(a))=(i,j,h)$ is finite.
\end{defi}

We will simply say that $Q$ is an $H$-graded quiver, without mention
to the degree function which is implicitly understood. Each degree
function on a quiver $Q$ induces an $H$-grading on the algebra $KQ$,
where the degree of a path of positive length is defined as the sum
of the degrees of its arrows and $\text{deg}(e_i)=0$, for all $i\in
Q_0$. In the following result, for each natural number $n$, we
denote by $KQ_{\geq n}$ the vector subspace of $KQ$ generated by the
paths of length $\geq n$. For each ideal $I$ of an algebra, we put
$I^\omega =\bigcap_{n>0}I^n$.

\begin{prop} \label{prop.algebra by quiver and relations}
Let $A=\oplus_{h\in H}A_h$ be a split basic locally finite
dimensional graded algebra with enough idempotents and let
$J=J^{gr}(A)$ be its graded Jacobson radical. There is an $H$-graded
locally finite dimensional  quiver $Q$ and a subset
$\rho\subset\bigcup_{i,j\in Q_0} e_iKQ_{\geq 2}e_j$, consisting of
homogeneous elements with respect to the induced $H$-grading on
$KQ$, such that $A/J^{\omega}$ is isomorphic to $KQ/<\rho >$.
Moreover $Q$ is unique, up to isomorphism of $H$-graded quivers.
\end{prop}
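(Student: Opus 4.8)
The plan is to mimic the classical Gabriel construction of the quiver of a basic algebra, adapted to the graded setting with enough idempotents. First I would set $\bar A = A/J^\omega$ and note that, since $A$ is split basic and locally finite dimensional, $J/J^2$ is a graded $\bar A$-bimodule on which both $A$ and $\bar A$ act through the semisimple quotient $A/J \cong \oplus_{i\in I} K e_i$ (here I use Proposition~\ref{prop.basic graded algebra}(1) to identify $J_0$ with $J(A_0)$ and the split hypothesis to get $A_0/J(A_0)\cong\oplus_i Ke_i$ as well as $e_iA_he_i\subseteq J$ for $h\neq 0$, so that $A/J$ is trivially graded and decomposes as the stated product of copies of $K$). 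Consequently $e_i(J/J^2)_h e_j$ is a genuine $K$-vector space, finite dimensional by the locally finite dimensional hypothesis; I take $Q_0 = I$ and, for each triple $(i,j,h)$, let the arrows from $i$ to $j$ of degree $h$ be indexed by a $K$-basis of $e_j(J/J^2)_h e_i$ (note the order $e_j\cdots e_i$ to match the anti-multiplication convention by which $A$ is viewed as a $K$-category). Local finite dimensionality of $Q$ as a graded quiver is then immediate.

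Next I would construct the surjection. Choosing homogeneous lifts in $e_jJ_h e_i$ of the chosen basis elements gives, by the universal property of the path algebra, a graded algebra homomorphism $\phi: KQ \longrightarrow A \to \bar A = A/J^\omega$ sending $e_i\mapsto e_i$ and each arrow to its chosen lift modulo $J^\omega$. The key point is surjectivity: one shows $\phi(KQ_{\ge 1})$ together with the idempotents spans $\bar A$ modulo each power $J^n/J^\omega$, using that $J/J^2$ is generated (as a bimodule over the semisimple part) by the arrow residues and then inducting on $n$ so that $J^n = J^{n+1} + \phi(KQ_{\ge n})$; passing to the limit and using that $\bigcap_n J^n = J^\omega$ maps to $0$ in $\bar A$ gives that $\phi$ is onto. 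Then $\rho$ is taken to be a (homogeneous) generating set of $\ker\phi$; one checks $\ker\phi \subseteq KQ_{\ge 2}$ because $\phi$ induces an isomorphism in degrees $0$ and $1$ of the radical filtration (arrows were chosen to be a basis of $J/J^2$), so $\rho$ can be chosen inside $\bigcup_{i,j} e_i KQ_{\ge 2} e_j$ consisting of homogeneous elements. This yields $\bar A \cong KQ/\langle\rho\rangle$.

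For uniqueness of $Q$, I would argue as in the ungraded case: if $\psi: KQ'/\langle\rho'\rangle \xrightarrow{\cong} \bar A$ is another such presentation, then $\psi$ must carry the distinguished idempotents $e_{i'}$ to a complete set of primitive orthogonal homogeneous idempotents of $\bar A$, hence — up to the inner automorphism conjugating one such set to another and using that $A$ is basic so $Ae_i[h]\cong Ae_j[k]$ forces $i=j$, $h=k$ (Proposition~\ref{prop.basic graded algebra}(2)) — one gets a bijection $Q_0 \cong Q_0'$ compatible with the gradings. Then $\psi$ induces a graded bimodule isomorphism on $J/J^2$, whence a degree- and vertex-preserving bijection between the arrow sets; this is precisely an isomorphism of $H$-graded quivers $Q \cong Q'$.

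The main obstacle I expect is the surjectivity/$J^\omega$ bookkeeping: in the enough-idempotents setting $J$ need not be nilpotent and the filtration $(J^n)$ need not be separated, so one has to be careful that $\phi(KQ) + J^\omega = A$ and that this is exactly the right statement to divide out $J^\omega$ — the cleanest route is to fix $(i,h)$, observe that only finitely many vertices are reachable in each fixed length and degree (this is where the \emph{locally finite dimensional} hypothesis, giving finiteness of each $e_iJ_he_j$, combined with the length filtration, does the work), and run the induction $e_iA_he_j = \phi((KQ)_h)_{ij} + e_i(J^n)_he_j$ for all $n$, concluding since the intersection over $n$ lies in $J^\omega$. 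Everything else is a routine transcription of the classical argument, keeping track of the $H$-degrees and the anti-multiplication convention relating $A$ to its associated $K$-category.
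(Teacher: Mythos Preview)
Your proposal is correct and follows essentially the same Gabriel-style argument as the paper's own proof: choose arrows as homogeneous lifts of bases of the $(i,j,h)$-pieces of $J/J^2$, build the graded map $KQ\to A$, establish surjectivity onto $A/J^\omega$ by exploiting the finite-dimensionality of each $e_iA_he_j$ (the paper phrases this as a \emph{downward} induction from the stabilization index $m_{ij}(h)$, the least $n$ with $e_i(J^n)_he_j=e_i(J^{n+1})_he_j$, whereas you run the induction upward and then invoke stabilization --- same content), and leave $\ker\subseteq KQ_{\ge 2}$ and uniqueness as routine. One convention slip: in the paper's setup an arrow $i\to j$ sits in $e_iJ_he_j$, not $e_jJ_he_i$, since the path-algebra multiplication already agrees with the $K$-category anti-composition as defined in Section~\ref{sec:Graded algebras vs graded categories}; your reversal would yield $Q^{\mathrm{op}}$ rather than $Q$.
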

\begin{proof}
It is an adaptation of the corresponding proof, in more restrictive
situations, of the ungraded case (see, e.g., \cite{BG}[Section 2]).
We give the general outline, leaving the details to the reader.

Let $(e_i)_{i\in I}$ be the basic distinguished family of orthogonal
idempotents. The graded quiver $Q$ will have $Q_0=I$ as its sets of
vertices. Whenever $h\in\text{Supp}(\frac{e_iJe_j}{e_iJ^2e_j})$, we
will select a subset $Q_1(i,j)_h$ of $e_iJ_he_j$ whose image by the
projection
$e_iJ_he_j\twoheadrightarrow\frac{e_iJ_he_j}{e_i(J^2)_he_j}$ gives
a basis of $\frac{e_iJ_he_j}{e_i(J^2)_he_j}$. We will take as
arrows of degree $h$ from $i$ to $j$ the elements of $Q_1(i,j)_h$,
and then $Q_1=\bigcup_{i,j\in Q_0;h\in H}Q_1(i,j)_h$. The
so-obtained graded quiver gives a grading on $KQ$ and there is an
obvious homomorphism of graded algebras $f:KQ\longrightarrow A$
which takes $e_i\rightsquigarrow e_i$ and $a\rightsquigarrow a$, for
all $i\in Q_0$ and $a\in Q_1$.

We claim that the composition
$KQ\stackrel{f}{\longrightarrow}A\stackrel{p}{\twoheadrightarrow}A/J^\omega$
is surjective or, equivalently, that $\text{Im}(f)+J^\omega =A$. Due
to the split basic condition of $A$, it is easy to see that
$A=(\sum_{i\in I}Ke_i)\oplus J$ and the task is then reduced to
prove the inclusion $J\subseteq \text{Im}(p)+J^\omega$. Since
$e_iA_he_j$ is finite-dimensional, for each triple $(i,h,j)\in
I\times H\times I$, there is a smallest natural number $m_{ij}(h)$
such that $e_i(J^n)_he_j=e_i(J^{n+1})_he_j$, for all $n\geq
m_{ij}(h)$. We will prove, by induction on $k\geq 0$, that
$e_i(J^{m_{ij}(h)-k})_he_j\subset \text{Im}(f)+J^\omega$, for all
$(i,h,j)$, and then the inclusion $J\subseteq \text{Im}(p)+J^\omega$
will follow. The case $k=0$ is trivial, by the definition of
$m_{ij}(h)$. So we assume that $k>0$ in the sequel. Fix any triple
$(i,h,j)\in I\times H\times I$ and put $n:=m_{ij}(h)-k$. If $x\in
e_i(J^n)_he_j$ then $x$ is a sum of products of the form
$x_1x_2\cdot ...\cdot x_n$, where $x_r$ is a homogeneous element in
$e_{i'}Je_{j'}$, for some pair $(i',j')\in I\times I$. So it is not
restrictive to assume that $x=x_1x_2\cdot ...\cdot x_n$ is a product
as indicated. By definition of the arrows of $Q$, each $x_r$ admits
a decomposition $x_r=y_r+z_r$, where $y_r$ is a linear combination
of arrows (of the same degree) and $z_r\in J^2$. It follows that
$x=y+z$, where $y$ is a linear combination of paths of length $n$
and $z\in e_iJ^{n+1}e_j$. Then $y\in \text{Im}(f)$ and, by the
induction hypothesis, we know that $z\in \text{Im}(f)+J^\omega$.

Proving that $\text{Ker}(p\circ f)\subseteq KQ_{\geq 2}$ goes as in
the ungraded case,  as so does the proof of the uniqueness of $Q$.
Both are left to the reader.
\end{proof}

A weakly basic locally finite dimensional algebra $A$  will be
called \emph{connected} when, for each pair $(i,j)\in I\times I$
there is a sequence $i=i_0, i_1,...,i_n=j$ of elements of $I$ such
that, for each $k=1,...,n$, either $e_{i_{k-1}}Ae_{i_k}\neq 0$ or
$e_{i_k}Ae_{i_{k-1}}\neq 0$. If $Q$ is a  graded quiver, we say that
$Q$ is a  \emph{connected graded quiver} when it is connected as an
ungraded quiver. That is, when, for each pair $(i,j)\in Q_0\times
Q_0$, there is a sequence $i=i_0, i_1,...,i_n=j$ of vertices such
that there is an arrow $i_{k-1}\rightarrow i_k$ or an arrow
$i_k\rightarrow i_{k-1}$, for each $k=1,...,n$.

\begin{cor} \label{cor.positively graded quiver algebras}
Let $A=\oplus_{n\geq 0}A_n$ be a split basic locally finite
dimensional positively $\mathbb{Z}$-graded such that $A_0$ is semisimple. Then there exists a $\mathbb{Z}$-graded quiver $Q$ with $deg(a)>0$ for all $a\in Q_1$, uniquely determined up to
isomorphism of $\mathbb{Z}$-graded quivers, such that $A$ is
isomorphic to $KQ/I$, for a homogeneous ideal $I$ of $KQ$ such that
$I\subseteq KQ_{\geq 2}$. If, moreover, $A$ is connected   and the equality
$A_n=A_1\cdot\stackrel{n}{...}\cdot A_1$ holds for all $n>0$, then
the following assertions are equivalent:

\begin{enumerate}
\item $A$ is graded pseudo-Frobenius  \item There exists a graded
Nakayama form $(-,-):A\times
A\longrightarrow K$ with constant degree function.
\end{enumerate}
In particular, the Nakayama automorphism $\eta$ is always graded in
this latter case.
\end{cor}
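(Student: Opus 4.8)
\emph{The plan.} Split the statement into the presentation part and the equivalence $1)\Leftrightarrow 2)$. For the presentation part, note that since $A_0$ is semisimple Proposition \ref{prop.graded Jacobson radical} forces $J:=J^{gr}(A)=A_{\geq 1}:=\oplus_{n\geq 1}A_n$; then $J^n=A_{\geq n}$ because $A_n=A_1\cdots A_1$, so $J^\omega=\bigcap_n A_{\geq n}=0$ and Proposition \ref{prop.algebra by quiver and relations} applies and gives $A=A/J^\omega\cong KQ/I$ with $Q$ an $H$-graded quiver unique up to isomorphism and $I=\langle\rho\rangle$ homogeneous, $I\subseteq KQ_{\geq 2}$. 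Moreover $J/J^2=A_1$ is concentrated in degree $1$, so every arrow has degree $1$ (in particular $>0$). I also record that $A_0=\oplus_{i\in I}Ke_i$ (split basic, $A_0$ semisimple) and that ``$A$ connected'' is equivalent to ``$Q$ connected''. Of the equivalence, $2)\Rightarrow 1)$ is just Definition \ref{defi.pseudo-Frobenius graded algebras}, so the whole work is $1)\Rightarrow 2)$.

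\emph{Reduction for $1)\Rightarrow 2)$.} Assume $A$ graded pseudo-Frobenius. By Theorem \ref{teor.graded pseudo-Frobenius algebras} each $e_iA$ is a finitely generated projective, finitely cogenerated injective graded right module; being an indecomposable injective with finitely generated essential socle, $\mathrm{Soc}_{gr}(e_iA)$ is graded-simple, hence (as $A$ is split basic) one--dimensional and concentrated in a single degree $d_i\in\mathbb Z$. Since $e_iA$ is positively graded and its socle is essential, a nonzero homogeneous element of degree $>d_i$ would generate a submodule meeting the socle in too high a degree; so $e_iA$ lives in degrees $[0,d_i]$, and consequently $e_iA_{d_i}=\mathrm{Soc}_{gr}(e_iA)=Kw_i$ with $w_i\in e_iA_{d_i}e_{\nu(i)}$, where $\nu$ is the Nakayama permutation (Proposition \ref{prop.uniqueness of Nakayama permutation}); one gets $D(e_iA)\cong Ae_{\nu(i)}[d_i]$, whence, comparing top degrees, the top degree $d'_j$ of $Ae_j$ satisfies $d'_j=d_{\nu^{-1}(j)}$, $A_{d'_j}e_j=\mathrm{Soc}_{gr}(Ae_j)=Kw'_j$, $w'_j\in e_{\nu^{-1}(j)}A_{d'_j}e_j$. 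Since $e_i\mathrm{Soc}_{gr}(A)=\mathrm{Soc}_{gr}(e_iA)$ has support $\{d_i\}$, Proposition \ref{prop.graded Nak-form via basis} shows that the degree map of \emph{every} graded Nakayama form of $A$ equals $(d_i)_{i\in I}$, and conversely, if all $d_i$ equal some $d$, then taking $\mathcal B_i=\{w_i\}$ in Proposition \ref{prop.graded Nak-form via basis} produces a graded Nakayama form of degree $d$. \textbf{Thus $2)$ is equivalent to: all $d_i$ coincide.}

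\emph{The key claim: if there is an arrow $i\to j$ with $i\neq j$, then $d_i=d_j$.} Pick $0\neq\alpha\in e_iA_1e_j$. The nonzero submodule $\alpha A\subseteq e_iA$ contains the simple essential socle $Kw_i$, so it has top degree $d_i$; but $\alpha A$ is the image of the graded morphism $\lambda_\alpha\colon e_jA[-1]\longrightarrow e_iA$, $x\mapsto\alpha x$, and $e_jA[-1]$ has top degree $d_j+1$, so $d_i\leq d_j+1$. If $d_i=d_j+1$, then in degree $d_i$ the surjection $\lambda_\alpha$ carries $(e_jA[-1])_{d_i}=e_jA_{d_j}=Kw_j$ onto $(\alpha A)_{d_i}=e_iA_{d_i}=Kw_i$, so $\alpha w_j$ is a nonzero multiple of $w_i$; but $\alpha w_j\in e_iAe_{\nu(j)}$ and $w_i\in e_iAe_{\nu(i)}$, forcing $\nu(i)=\nu(j)$, i.e. $i=j$, a contradiction. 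Hence $d_i\leq d_j$. The left--right symmetric argument (using $Ae_i$, right multiplication by $\alpha$, and the $w'_\bullet$) gives $d'_j\leq d'_i$, i.e. $d_{\nu^{-1}(j)}\leq d_{\nu^{-1}(i)}$. To combine them I use that the Nakayama automorphism $\eta$ of Corollary \ref{cor.Nakayama automorphism}, with $\eta(e_k)=e_{\nu(k)}$, preserves $J$ and $J^2$: indeed for each finite $F\subseteq I$ it restricts to a ring isomorphism $e_FAe_F\to e_{\nu(F)}Ae_{\nu(F)}$, which carries $J^{gr}(e_FAe_F)$ (= the ordinary Jacobson radical of this finite--dimensional algebra) to $J^{gr}(e_{\nu(F)}Ae_{\nu(F)})$, and $J=\bigcup_F J^{gr}(e_FAe_F)$ by Proposition \ref{prop.graded Jacobson radical}. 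So $\eta$ induces isomorphisms $e_k(J/J^2)e_l\cong e_{\nu(k)}(J/J^2)e_{\nu(l)}$; since $J/J^2=A_1$, there is an arrow $k\to l$ iff there is an arrow $\nu(k)\to\nu(l)$. Reindexing the left inequality through this equivariance turns it into: arrow $i\to j$, $i\neq j\Rightarrow d_j\leq d_i$. Together with $d_i\leq d_j$ this proves the claim. As $Q$ is connected, all $d_i$ are equal, $2)$ follows, and finally, the degree map being the constant $d$, Corollary \ref{cor.Nakayama automorphism}(2) yields that $\eta$ can be taken graded.

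\emph{Main obstacle.} Everything except the key claim is a routine transcription of the unital self-injective case. The difficulty in the claim is that neither one-sided inequality suffices on its own ($d$ being merely nondecreasing along arrows is compatible with non-constant $d$, and — as the repetitive algebra of $K$ shows — the quiver of a connected pseudo-Frobenius algebra need not be strongly connected), so the two inequalities must be matched, and the only way I see to match them is through the compatibility of $\nu$ with the arrows. That compatibility rests on the fact that $\eta$ — a priori only an \emph{ungraded} automorphism — respects the graded radical filtration, which one checks on the corner algebras $e_FAe_F$, where the graded radical is just the ordinary one. This is the single genuinely new ingredient; I expect the bookkeeping around $d'_j=d_{\nu^{-1}(j)}$ and the left--right dualizations to be the only other place where care is needed.
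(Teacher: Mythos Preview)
Your argument for $1)\Rightarrow 2)$ is correct but follows a genuinely different route from the paper's. The paper does not introduce the top degrees $d_i$ at all; instead it works directly with the Nakayama automorphism $\eta$ and the degree-shift formula hidden in the proof of Corollary~\ref{cor.Nakayama automorphism} (namely $\eta(e_iA_he_j)\subseteq e_{\nu(i)}A_{h+h_j-h_i}e_{\nu(j)}$). Since the grading is by path length and $\eta(a)$ is a nonzero homogeneous element of positive degree for each arrow $a$, one gets $\deg(\eta(a))\geq 1$; the paper then uses a short ``subquiver'' trick (look at the arrows $a$ with $\deg(\eta(a))=1$, observe the subalgebra they generate already contains $\eta^{-1}$ of every arrow, hence is all of $A$) to force $\deg(\eta(a))=1$ for all $a$, and then the shift formula gives $h_{i(a)}=h_{t(a)}$ directly. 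Your argument replaces this by the pair of inequalities $d_i\leq d_j$ (from the right-module socle) and $d_j\leq d_i$ (the left-module inequality, transported along the $\nu$-equivariance of arrows that you deduce from $\eta(J)=J$ via corner algebras). Both approaches ultimately hinge on the same fact that $\eta$ respects the radical filtration, but yours makes the role of the socle degrees explicit and avoids the subquiver manoeuvre, at the cost of having to juggle the left and right pictures and the reindexing through $\nu$; the paper's is a bit slicker once the shift formula is in hand.

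One small slip in your presentation paragraph: you invoke $A_n=A_1\cdots A_1$ to get $J^n=A_{\geq n}$ and $J/J^2=A_1$, but that hypothesis belongs only to the second half of the statement. For the first half you only have $J^n\subseteq A_{\geq n}$, which already gives $J^\omega=0$, and $J/J^2$ concentrated in positive (not necessarily unit) degrees, which is exactly what the statement claims about $\deg(a)$.
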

\begin{proof}
The point here is that if $x\in J^n$ is a homogeneous element, then
$\text{deg}(x)\geq n$, which implies that $J^\omega$ does not
contain nonzero homogeneous elements and, hence, that $J^\omega =0$. Then
the first part of the statement is a direct consequence of
proposition \ref{prop.algebra by quiver and relations}. Moreover,
one easily sees that the connectedness of $A$ is equivalent in this
case to the connectedness of the quiver $Q$.

As for the second part, we only need to prove that if $(-,-):A\times
A\longrightarrow K$ is a graded Nakayama form, then its associated
degree function is constant. The argument is inspired by
\cite{MV}[Proposition 3.2]. We consider that $A=KQ/I$, where $Q$ is
connected. The facts that $A_0$ is semisimple and
$A_n=A_1\cdot\stackrel{n}{...}\cdot A_1$, for all $n>0$,
translate into the fact that the degree function
$\text{deg}:Q_1\longrightarrow\mathbb{Z}$ takes constant value $1$,
so that the induced grading on $KQ$ is the one by path length.

Let now $\eta :A\longrightarrow A$ be the Nakayama automorphism
associated to $(-,-)$. If $a:i\rightarrow j$ is any arrow in $Q$,
then  from  corollary \ref{cor.Nakayama automorphism} we get that
$\eta (a)$ is a homogeneous element in $e_{\nu (i)}Je_{\nu
(j)}=e_{\nu (i)}Ae_{\nu (j)}$.  Since obviously $\text{deg}(\eta (a))\neq
0$, we get that $\text{deg}(\eta (a))\geq\text{deg}(a)$, which
implies that $\text{deg}(\eta (x))\geq\text{deg}(x)$, for each
homogeneous element $x\in\bigcup_{i,j\in Q_0}e_iAe_j$. Let again $a:i\rightarrow j$ be an
arrow and put $x=\eta^{-1}(a)$. We claim that $x$ is homogeneous of
degree $1$. Indeed, we have $x=x_1+x_2+...+x_n$, with
$\text{deg}(x_k)=k$, so that $a=\eta (x)=\eta (x_1)+\eta (x')$,
where $x'=\sum_{2\leq k\leq n}x_k$ and, hence,   $\eta (x')$ is a
sum of homogeneous elements of degrees $\geq 2$. It follows that
$a=\eta (x_1)$ and $\eta (x')=0$, which, by the bijective condition
of $\eta$, gives that $x'=0$. Therefore $x=x_1$ as desired.

The last paragraph implies that, for each pair $(i,j)\in Q_0\times
Q_0$ such that there is an arrow $i\rightarrow j$ in $Q$, there is a
vector subspace $V_{ij}$ of $e_{\nu^{-1}(i)}KQ_1e_{\nu^{-1}(j)}$
such that $\eta_{|V_{ij}}: V_{ij}\longrightarrow e_iKQ_1e_j$ is a
bijection. Let now $\tilde{Q}$ be the subquiver of $Q$ with the same
vertices and with arrows those $a\in Q_1$ such that $\text{deg}(\eta
(a))=1$. Then $V_{ij}\subseteq
e_{\nu^{-1}(i)}K\widetilde{Q}e_{\nu^{-1}(j)}$ and
$\tilde{A}=\frac{K\tilde{Q}+I}{I}$ is a subalgebra of $A=KQ/I$ such
that the image of the restriction map
$\eta_{|\tilde{A}}:\tilde{A}\longrightarrow A$ contains the vertices
and the arrows (when viewed as elements of $A$ in the obvious way).
Note that $\eta_{|\tilde{A}}$ is a homomorphism of graded algebras,
which immediately implies that it is surjective and, hence,
bijective. But then necessarily $\tilde{A}=A$ for $\eta$ is an
injective map. We will derive from this that $\text{deg}(\eta
(a))=\text{deg}(a)$, for each $a\in Q_1$. Indeed, if
$\text{deg}(\eta (a))>1$,  then $\eta (a)=\eta (x)$, for some
homogeneous element $x\in\tilde{A}$ of degree
$\text{deg}(x)=\text{deg}(\eta (a))$. By the injective condition of
$\eta$, we would get that $a=x$, which is a contradiction.

  If now $h:Q_0\longrightarrow\mathbb{Z}$ is the degree
function associated to the graded Nakayama form, the proof of
corollary \ref{cor.Nakayama automorphism} gives that
$h_{i(a)}=h_{t(a)}$, for each $a\in Q_1$. Due to the connectedness
of $Q$, we conclude that $h$ is a constant function.
\end{proof}

\section{Coverings  of graded categories and preservation of the pseudo-Frobenius condition}

\subsection{Covering theory of graded categories}

In this part we will present the basics of covering theory of graded
categories or, equivalently, of graded algebras with enough
idempotents. It is an adaptation of the classical theory (see
\cite{Ri},  \cite{G2}, \cite{BG}), where we incorporate  more recent
ideas of \cite{CM} and \cite{Asa2}, where some of the constraining
hypotheses of the initial theory disappear.

Let $A=\oplus_{h\in H}A_h$ and $B=\oplus_{h\in H}B_h$ be two locally
finite dimensional graded algebras with enough idempotents, with
$(e_i)_{i\in I}$ and $(\epsilon_j)_{j\in J}$ as respective
distinguished families of homogeneous orthogonal idempotents of
degree $0$. Suppose that $F:A\longrightarrow B$ is a graded functor
and that it is surjective on objects, i.e., for each $j\in J$ there
exists $i\in I$ such that $F(e_i)=\epsilon_j$. To this functor one
canonically associates the  \emph{pullup or restriction of scalars
functor} $F^\rho:B-Gr\longrightarrow A-Gr$. If $X$ is a graded left
$B$-module, then we put $e_iF^\rho (X)=\epsilon_{F(i)}X$, for all
$i\in I$, and if $a\in\bigcup_{i,i'\in I}e_iAe_{i'}$ and $x\in
F^\rho (X)$, then $ax:=F(a)x$. It has a left adjoint $F_\lambda
:A-Gr\longrightarrow B-Gr$, called the \emph{pushdown functor},
whose precise definition will be given below in the case that we
will need in this work.

The procedure of taking a weak skeleton gives rise to a graded
functor as above. Indeed, suppose that $A$ is as above and consider
the equivalence relation $\sim$ in $I$ such that $i\sim i'$ if, and
only if,  $Ae_i$ and $Ae_{i'}$ are isomorphic graded $A$-modules. If
$I_0$ is a set of representatives under this relation, then we can
consider the full graded subcategory of $A$ having as objects the
elements of $I_0$. This amounts to take the graded subalgebra
$B=\oplus_{i,i'\in I_0}e_iAe_{i'}$, which will be called the
\emph{weak skeleton} of $A$. If we denote by $[i]$ the unique
element of $I_0$ such that $i\sim [i]$, then there are  elements
$\xi_i\in e_iA_0e_{[i]}$ and $\xi_i^{-1}\in e_{[i]}A_0e_i$ such that
$\xi_i\xi_i^{-1}=e_i$ and $\xi_i^{-1}\xi_i=e_{[i]}$. We fix $\xi_i$
and $\xi_i^{-1}$ from now on.  By convention, we assume that
$\xi_{[i]}=e_{[i]}$, for each $[i]\in I_0$.  Now we get a surjective
on objects graded functor $F:A\longrightarrow B$ which takes
$i\rightsquigarrow [i]$ on objects and if $a\in e_iAe_{i'}$, then
$F(a)=\xi_i^{-1}a\xi_{i'}$. If we take $P=\oplus_{i\in I_0}e_iA$
then $P$ is an $H$-graded $B-A-$bimodule and the pullup functor is
naturally isomorphic to the 'unitarization' of the graded Hom
functor, $A\text{HOM}_B(P,-):B-Gr\longrightarrow A-Gr$ (see
subsection \ref{subsect. gr-algebras enough-idempt}). It is an
equivalence of categories and the pushout functor $F_\lambda$ gets
identified with $P\otimes_A-:A-Gr\longrightarrow B-Gr$, which, up to
isomorphism, takes $M\rightsquigarrow\oplus_{i\in I_0}e_iM$.

\begin{defi} \label{defi.Covering functor}
 Let $A$ and $B$ be as above. A graded functor $F:A\longrightarrow B$ will be called a {\bf
covering functor} when it is surjective on objects and, for each
$(i,j,h)\in I\times J\times H$, the induced maps

\begin{center}
$\oplus_{i'\in F^{-1}(j)}e_iA_he_{i'}\longrightarrow e_{F(i)}B_he_j$

$\oplus_{i'\in F^{-1}(j)}e_{i'}A_he_{i}\longrightarrow
e_{j}B_he_{F(i)}$
\end{center}
are bijective.
\end{defi}

We shall now present the paradigmatic example of covering functor,
which is actually the only one that we will need in our work.  In
the rest of this subsection, $A=\oplus_{h\in H}A_h$ will be a
locally finite dimensional graded algebra with a distinguished
family $(e_i)_{i\in I}$ of homogeneous orthogonal idempotents of
degree $0$, fixed from now on. We will assume that $G$ is a group
acting on $A$ as a group of graded automorphisms (of degree $0$)
which permutes the $e_i$. That is, if $\text{Aut}^{gr}(A)$ denotes
the group of graded automorphisms of degree $0$ which permute the
$e_i$, then we have a group homomorphism $\varphi
:G\longrightarrow\text{Aut}^{gr}(A)$. We will write $a^g=\varphi
(g)(a)$, for each $a\in A$ and $g\in G$. In such a case, the
\emph{skew group algebra} $A\star G$ has as elements the formal
$A$-linear combinations $\sum_{g\in G}a_g\star g$, with $a_g\in A$
for all $g\in G$.

The multiplication extends by linearity the product $(a\star
g)(b\star g')=ab^g\star gg'$, where $a,b\in A$ and $g,g'\in G$. The
new algebra inherits an $H$-grading from $A$ by taking $(A\star
G)_h=A_h\star G=\{\sum_{g\in G}a_g\star g\in A\star G:$ $a_g\in
A_h\text{, for all }g\in G\}$. We have a canonical inclusion of
$H$-graded algebras $\iota: A\hookrightarrow A\star G$ which maps
$a\rightsquigarrow a\star 1$, where $1$ is the unit of $G$.

\begin{prop} \label{prop.G-covering functor}
In the situation above, let $\Lambda$ be the weak skeleton of
$A\star G$ and $F:A\star G\longrightarrow\Lambda$ the corresponding
functor. The following assertions hold:

\begin{enumerate}
\item $(A\star G)(e_i\star 1)$ and $(A\star G)(e_j\star 1)$ (resp. $(e_i\star 1)(A\star G)$ and $(e_j\star 1)(A\star G)$) are isomorphic in $A\star G-\text{Gr}$ (resp. $\text{Gr}-A\star G$) if, and only if, $i$ and $j$ are in the same $G$-orbit.

\item The composition $A\stackrel{\iota}{\hookrightarrow}
A\star G\stackrel{F}{\longrightarrow}\Lambda$ is a covering functor.
The corresponding pushdown functor $F_\lambda
:A-Gr\longrightarrow\Lambda-Gr$ is exact and takes
$Ae_i\rightsquigarrow\Lambda e_{[i]}$, for each $i\in I$.
\end{enumerate}
\end{prop}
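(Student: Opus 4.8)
The plan is to prove the two assertions of Proposition \ref{prop.G-covering functor} by a direct analysis of the skew group algebra $A\star G$ and its relation to $A$ via the inclusion $\iota$. Throughout I will use the basic structural facts: $A\star G$ is a graded algebra with enough idempotents whose distinguished family can be taken to be $(e_i\star 1)_{i\in I}$, and, crucially, the right $A$-module decomposition $(A\star G)(e_i\star 1)=\bigoplus_{g\in G}(Ae_i^{g^{-1}}\star g)$ together with the analogous left decomposition. The key computational identity is that, for $a\in A$, one has $(e_i\star 1)(a\star g)(e_j\star 1)=(e_iae_j^g)\star g$, so that $(e_i\star 1)(A\star G)_h(e_j\star 1)\cong\bigoplus_{g\in G}e_iA_he_j^g$ as $K$-vector spaces, with the summand indexed by $g$ corresponding to $e_iA_he_{g^{-1}(j)}$ after relabelling.

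For assertion 1, I would first establish that $(A\star G)(e_i\star 1)\cong(A\star G)(e_j\star 1)$ in $A\star G-\mathrm{Gr}$ exactly when $e_i\star 1$ and $e_j\star 1$ are conjugate (up to the obvious degree-zero isomorphism), which amounts to finding units realising the isomorphism. If $j=g(i)$ for some $g\in G$, then $e_j\star 1=(e_i\star g^{-1})(e_i\star g)\cdot(\text{appropriate idempotent bookkeeping})$; more precisely $e_{g(i)}\star 1 = (e_{g(i)}\star g)(e_i\star g^{-1})$ and $(e_i\star g^{-1})(e_{g(i)}\star g)=e_i\star 1$, exhibiting $e_i\star 1$ and $e_{g(i)}\star 1$ as equivalent idempotents, hence the corresponding projectives are isomorphic graded modules. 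Conversely, if the projectives are isomorphic, then by Proposition \ref{prop.basic graded algebra} applied to $A\star G$ (after checking it is weakly basic, or at least that its finitely generated projectives form a Krull--Schmidt category — this needs the local endomorphism ring of $(A\star G)(e_i\star 1)$, which follows from $(e_i\star 1)(A\star G)_0(e_i\star 1)\cong\bigoplus_{g\in G_i}e_iA_0e_i\star g$ being, after passing to the stabiliser $G_i$, a crossed-product-type local ring when $e_iA_0e_i$ is local), one reads off that $i$ and $j$ lie in the same $G$-orbit. I expect the converse direction to be the more delicate point, since one must control when two such idempotents in a skew group algebra are equivalent; the cleanest route is probably to apply the pullup along $\iota$, which sends $(A\star G)(e_i\star 1)$ to a module whose restriction to $A$ is $\bigoplus_{g\in G}Ae_{g(i)}$ (up to grading shifts), and compare multiplicities of indecomposable projective $A$-summands.

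For assertion 2, the covering functor condition is exactly the bijectivity of the two maps in Definition \ref{defi.Covering functor} for the composite $A\hookrightarrow A\star G\to\Lambda$. Since $F:A\star G\to\Lambda$ is the weak-skeleton functor (an equivalence), the content is that for each $i,i'\in I$ and $h\in H$ the natural map $\bigoplus_{g\in G}e_iA_he_{g(i')}\to(e_i\star 1)(A\star G)_h(e_{i'}\star 1)$ is bijective — but this is precisely the $K$-vector-space identity noted above, after reindexing the sum over $g$ by the orbit element $g(i')$ and using that the fibre $F^{-1}([\text{class of }i'])$ is identified, via $\xi$'s, with the $G$-orbit of $i'$. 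I would spell out that $\iota^\rho$ restricted to object-components does exactly this identification, then invoke surjectivity on objects (automatic since the weak skeleton meets every orbit). Finally, exactness of $F_\lambda$: the pushdown here is $P\otimes_A-$ where $P=\bigoplus_{[i]\in I_0}(e_i\star 1)(A\star G)$ viewed as a $\Lambda$-$A$-bimodule; exactness follows because $P$ is free (hence flat) as a right $A$-module — indeed $(e_i\star 1)(A\star G)=\bigoplus_{g\in G}(e_i^{g}\cdot A)\star g$ decomposes as a direct sum of modules of the form $e_kA$ as a right $A$-module, each a direct summand of $A_A$. The statement $F_\lambda(Ae_i)\cong\Lambda e_{[i]}$ then follows from the description of $F_\lambda$ in terms of the weak skeleton in the paragraph preceding the proposition (it takes $M\rightsquigarrow\bigoplus_{k\in I_0}e_kM$, applied to $M=Ae_i$, combined with assertion 1 identifying which summand survives). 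The main obstacle I anticipate is the careful bookkeeping of degree-zero conjugating units $\xi$ and group elements $g$ so that the identifications are canonical and compatible; the algebra is routine but error-prone, and the one genuinely non-formal input is verifying the Krull--Schmidt / local-endomorphism property of $A\star G$ needed for the converse in assertion 1.
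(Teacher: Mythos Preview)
Your approach is essentially that of the paper. For assertion 1, the ``if'' direction is identical: both you and the paper exhibit the mutually inverse degree-zero elements $e_{g(i)}\star g\in (e_{g(i)}\star 1)(A\star G)_0(e_i\star 1)$ and $e_i\star g^{-1}$. For the converse, the paper simply asserts that the existence of $x,y$ with $xy=e_i$ and $yx=e_j$ ``immediately implies that $i$ and $j$ are in the same $G$-orbit'', with no further justification. You are right to flag this as the delicate step: as stated (with $A$ merely locally finite dimensional) the implication is actually false---take $A=M_2(K)$ with the matrix idempotents and $G$ trivial. Your two proposed routes (Krull--Schmidt for $A\star G$, or restriction along $\iota$ and comparison of indecomposable projective summands) both work once $A$ is assumed weakly basic, which is the ambient hypothesis in every later use of the proposition. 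So here you are in fact more careful than the paper.

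Two small computational slips. First, $(e_i\star 1)(A\star G)_0(e_i\star 1)=\bigoplus_{g\in G}e_iA_0e_{g(i)}\star g$, not only the sum over $g\in G_i$; the terms with $g(i)\neq i$ are present but, under the weakly basic, locally bounded and free-action hypotheses, lie in the Jacobson radical (this is exactly the computation done later in the proof of Proposition~\ref{prop.gr-Frobenius via pushdown functor}). Second, since $(a\star g)(e_i\star 1)=ae_i^{g}\star g$, one has $(A\star G)(e_i\star 1)=\bigoplus_{g\in G}Ae_{g(i)}\star g$, so your exponent $g^{-1}$ should be $g$. Neither slip affects the strategy.

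For assertion 2 your plan coincides with the paper's. The covering condition is precisely the identity $e_i(A\star G)_he_j=\bigoplus_{g\in G}e_iA_he_{g(j)}\star g$, which the paper makes explicit by choosing $\xi_{g(i)}=e_{g(i)}\star g$ and computing $F\circ\iota$ on each piece. For exactness the paper factors the pushdown as $F_\lambda\circ\iota_\lambda$ with $\iota_\lambda\cong (A\star G)\otimes_A-$, exact because $A\star G$ is free as a right $A$-module; your formulation via $P\otimes_A-$ with $P$ a direct sum of modules of the form $e_kA$ is the same argument. The identification $F_\lambda(Ae_i)\cong\Lambda e_{[i]}$ is likewise obtained in the same way.
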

\begin{proof}
All through the proof, whenever necessary,  we identify $a$ with $a\star 1$, for each $a\in A$.

1) We do the part of the assertion concerning graded left modules, leaving the one for right modules to the reader. We have that $(A\star
G)e_i\cong (A\star G)e_j$ if, and only if, there are $x\in
e_i(A\star G)_0e_j=\oplus_{g\in G}e_iA_0e_{g(j)}\star g$ and  $y\in
e_j(A\star G)_0e_i=\oplus_{g\in G}e_jA_0e_{g(i)}\star g$ that
$xy=e_i$ and $yx=e_j$. This immediately implies that $i$ and $j$ are
in the same $G$-orbit, i.e., that $e_i^g=e_j$, for some $g\in G$.
Conversely,   we have equalities $e_i\star
g=(e_i\star 1)(e_i\star g)(e_{g^{-1}(i)}\star 1)$ and
$e_{g^{-1}(i)}\star g^{-1}=(e_{g^{-1}(i)}\star 1)(e_{g^{-1}(i)}\star
g^{-1})(e_i\star 1)$, and also $(e_i\star g)(e_{g^{-1}(i)}\star
g^{-1})=e_i\star 1$ and $(e_{g^{-1}(i)}\star g^{-1})(e_i\star
g)=e_{g^{-1}(i)}\star 1$, which show that $(A\star G)e_i\cong
(A\star G)e_{g^{-1}(i)}$ for all $g\in G$ and $i\in I$.

\vspace*{0.3cm}

2)  The pullup functor is the composition $\Lambda
-Gr\stackrel{F^\rho}{\longrightarrow}A\star
G-Gr\stackrel{\iota^\rho}{\longrightarrow} A-Gr$, so that the
pushdown functor is $F_\lambda\circ\iota_\lambda$. We know that
$F_\lambda$ is an equivalence of categories. On the other hand
$\iota_\lambda$ is naturally isomorphic to $A\star
G\otimes_A-:A-Gr\longrightarrow A\star G-Gr$ since $\iota^\rho$ is
the usual restriction of scalars. The exactness of $A\star
G\otimes_A-$ implies that of $F_\lambda\circ\iota_\lambda$ and the
action of this functor on projective objects takes
$Ae_i\rightsquigarrow (A\star G)\otimes_AAe_i\cong (A\star
G)e_i\rightsquigarrow F_\lambda ((A\star G)e_i)$. But this latter
graded $\Lambda$-module is isomorphic to $\Lambda e_{[i]}$ by the
explicit definition of the pushdown functor when taking a weak
skeleton.

In order to check that $F\circ\iota$ is a covering functor we look
at the definition of the weak skeleton, taking into account assertion 1. We then take exactly one index $i\in I$ in each
$G$-orbit and in that way we get a subset $I_0$ of $I$. Up to graded
isomorphism, we have $\Lambda =\oplus_{i,j\in I_0}e_i(A\star G)e_j$.
For the explicit definition of $F$, we put $\xi_{g(i)}=e_{g(i)}\star
g$ and $\xi_{g(i)}^{-1}=e_i\star g^{-1}$, for each $i\in I_0$ and
$g\in G$. If $g,g'\in G$ and $i,j\in I_0$, then the map
$F:e_{g(i)}(A\star G)e_{g'(j)}\longrightarrow e_i\Lambda
e_j=e_i(A\star G)e_j$ takes
$x\rightsquigarrow\xi_{g(i)}^{-1}x\xi_{g'(j)}=(e_i\star
g^{-1})x(e_{g'(j)}\star g')$. Then the composition

\begin{center}
$e_{g(i)}Ae_{g'(j)}\stackrel{\iota}{\longrightarrow}e_{g(i)}(A\star
G)e_{g'(j)}\stackrel{F}{\longrightarrow} e_i\Lambda e_j=e_i(A\star
G)e_j$
\end{center}
takes $a\rightsquigarrow (e_i\star g^{-1})(a\star 1)(e_{g'(j)}\star
g')=a^{g^{-1}}\star g^{-1}g'$.

The proof that $F\circ\iota$ is a covering functor gets then reduced
to check that if $i,j\in I_0$ and $h\in H$ then the maps

\begin{center}
$\oplus_{g\in G}e_{g(i)}A_he_j\longrightarrow  e_i\Lambda_h
e_j=e_i(A\star G)_he_j$, \hspace*{0.5cm} $(a_g)_{g\in
G}\rightsquigarrow \sum_{g\in G}a_g^{g^{-1}}\star g^{-1}$

$\oplus_{g\in G}e_{i}A_he_{g(j)}\longrightarrow  e_i\Lambda_h
e_j=e_i(A\star G)_he_j$, \hspace*{0.5cm} $(b_g)_{g\in
G}\rightsquigarrow \sum_{g\in G}b_g\star g$
\end{center}
are both bijective. It is clear since $\oplus_{g\in
G}(e_{g(i)}A_he_j)^{g^{-1}}\star g^{-1}=e_i(A\star
G)_he_j=\oplus_{g\in G}e_iA_he_{g(j)}\star g$.
\end{proof}

\begin{defi} \label{defi.G-covering functor}
If $A=\oplus_{h\in H}A_h$, $G$ and $\Lambda$ are as above, then the
functor $F\circ\iota :A\longrightarrow\Lambda$ will be called a
\emph{G-covering} of $\Lambda$.
\end{defi}

If $A$ and $G$ are as in the setting, we say that $G$ \emph{acts
freely on objects} when $g(i)\neq i$, for all $i\in I$ and $g\in
G\setminus\{1\}$. In such case we can form the \emph{orbit category}
$A/G$. The objects of this category are the $G$-orbits $[i]$ of
indices $i\in I$ and the morphisms from $[i]$ to $[j]$ are formal
sums $\sum_{g\in G}[a_g]$, where $[a_g]$ is the $G$-orbit of an
element $a_g\in e_iAe_{g(j)}$. This definition does not depend on
$i,j$, but just on the orbits $[i],[j]$. The anticomposition of
morphisms extends by $K$-linearity the following rule. If
$a,b\in\bigcup_{i,j\in I}e_iAe_j$ and $[a]$, $[b]$ denote the
$G$-orbits of $a$ and $b$, then $[a]\cdot [b]=0$, in case
$[t(a)]\neq [i(b)]$, and $[a]\cdot [b]=[ab^g]$, in case
$[t(a)]=[i(b)]$,  where $g$ is the unique element of $G$ such that
$g(i(b))=t(a)$. We have an obvious canonical projection $\pi
:A\longrightarrow A/G$ with takes $a\rightsquigarrow [a]$. The
following is the classical interpretation of $\Lambda$ and is
implicit in \cite{Asa2}.

\begin{cor} \label{cor.orbit category}
Let $A$,  $G$ and $\Lambda$ be as in proposition
\ref{prop.G-covering functor} and suppose that $G$ acts freely on
objects. There is an equivalence of categories $\Upsilon
:\Lambda\stackrel{\cong}{\longrightarrow} A/G$ such that
$\Upsilon\circ F\circ\iota :A\longrightarrow A/G$ is the canonical
projection.
\end{cor}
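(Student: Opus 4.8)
The plan is to construct the equivalence $\Upsilon:\Lambda\stackrel{\cong}{\longrightarrow}A/G$ directly on objects and morphisms, using the explicit description of $\Lambda=\oplus_{i,j\in I_0}e_i(A\star G)e_j$ obtained in the proof of Proposition \ref{prop.G-covering functor}, and then verify that it is a well-defined graded $K$-linear functor, fully faithful, dense, and compatible with the canonical projections. First I would fix the set $I_0$ of representatives of $G$-orbits that was used to build the weak skeleton, so that $\Lambda(i,j)=e_i(A\star G)e_j=\oplus_{g\in G}e_iA_0e_{g(j)}\star g$ for $i,j\in I_0$ (in the relevant degree, $\oplus_{g\in G}e_iA_he_{g(j)}\star g$). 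Since $G$ acts freely on objects, for $i,j\in I_0$ the orbit $[j]$ meets $e_iAe_{?}$ in exactly the family $(e_iAe_{g(j)})_{g\in G}$, and the map $e_iAe_{g(j)}\longrightarrow[e_iAe_{g(j)}]\subseteq (A/G)([i],[j])$, $a\mapsto[a]$, is bijective because freeness forces the $G$-orbit of a nonzero element of $e_iAe_{g(j)}$ to meet $e_iAe_{?}$ only in that one summand. Thus I would define $\Upsilon$ to be the identity on objects (identifying $i\in I_0$ with its orbit $[i]$) and on morphisms to send $\sum_{g\in G}a_g\star g\in e_i(A\star G)e_j$ to $\sum_{g\in G}[a_g]\in(A/G)([i],[j])$.

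Next I would check functoriality and that $\Upsilon$ is graded $K$-linear. Linearity and grading-compatibility are immediate from the definition, since both the $A\star G$-structure on $\Lambda$ and the orbit-category structure are defined by extending $K$-bilinearly a rule on the homogeneous pieces $e_iA_he_{g(j)}$. For composition I would recall that in $\Lambda$ the composition is the (anti)multiplication inherited from $A\star G$: $(a\star g)\cdot(b\star g')=(b\star g')(a\star g)=b a^{g'}\star g'g$ in the $K$-category convention $x\circ y=yx$ of Subsection \ref{sec:Graded algebras vs graded categories}, while in $A/G$ the anticomposition rule is $[a]\cdot[b]=[ab^{g}]$ where $g$ is the unique element with $g(i(b))=t(a)$. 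Matching these up is the routine computational core: given $a\in e_iAe_{g(j)}$ and $b\in e_jAe_{g'(k)}$ with $i,j,k\in I_0$, one computes the product in $A\star G$ and checks that applying $\Upsilon$ gives exactly the orbit-category anticomposition, the point being that $g(j)$ is the unique object in the orbit $[j]$ hit from $i$ by the chosen representative, so the unique-element-of-$G$ bookkeeping in the definition of $A/G$ exactly reproduces the $\star$-multiplication of group elements. I expect this bookkeeping — keeping the conventions (antimultiplication, skew product, orbit composition) consistent — to be the main obstacle, purely as an accounting matter rather than a conceptual one.

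Finally I would establish that $\Upsilon$ is an equivalence and respects the projections. Full faithfulness is immediate once the morphism-space bijection $e_i(A\star G)_he_j\stackrel{\cong}{\longrightarrow}(A/G)_h([i],[j])$ is established, which is exactly the decomposition $e_i(A\star G)_he_j=\oplus_{g\in G}e_iA_he_{g(j)}\star g$ together with the bijectivity of $a\mapsto[a]$ on each summand noted above; density is trivial since $\Upsilon$ is bijective (indeed the identity) on objects. For the last assertion I would trace an arrow of $A$ through both paths: by the computation in the proof of Proposition \ref{prop.G-covering functor}, $F\circ\iota$ sends $a\in e_{g(i)}Ae_{g'(j)}$ (with $i,j\in I_0$) to $a^{g^{-1}}\star g^{-1}g'\in e_i(A\star G)e_j$, hence $\Upsilon\circ F\circ\iota$ sends it to $[a^{g^{-1}}\star g^{-1}g']$'s image $=[a^{g^{-1}}]=[a]$, using that $G$-orbits are preserved by the $G$-action on $A$; and this is precisely $\pi(a)$. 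Therefore $\Upsilon\circ F\circ\iota=\pi$, completing the proof.
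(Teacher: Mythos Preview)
Your proposal is correct and follows essentially the same approach as the paper: you define $\Upsilon$ on objects by $i\mapsto[i]$ and on morphisms by $a\star g\mapsto[a]$ (extended $K$-linearly), then verify density, full faithfulness, and the compatibility $\Upsilon\circ F\circ\iota=\pi$. The only differences are cosmetic: you check functoriality (compatibility with composition) explicitly, whereas the paper leaves this implicit, and you package full faithfulness as a single morphism-space bijection rather than separate faithful/full arguments; the underlying facts invoked are identical.
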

\begin{proof}
Let fix a set $I_0$ of representatives of the elements of $I$ under
the equivalence relation $\sim$ given by:  $i\sim j$ if, and only
if, $(A\star G)e_i\cong (A\star G)e_j$  are isomorphic as graded
$(A\star G)$-modules. Then, by definition, $\Lambda$ is the category
having as objects the elements of $I_0$ and $e_i\Lambda
e_j=e_i(A\star G)e_j=\oplus_{g\in G}[e_iAe_{g(j)}\star g]$ as space
of morphisms from $i$ to $j$. The functor $\Upsilon
:\Lambda\longrightarrow A/G$ is defined as $\Upsilon (i)=[i]$, for
each $i\in I_0$, and by $\Upsilon (a\star g)=[a]$, when $g\in G$ and
$a\in e_iAe_{g(j)}$, with $i,j\in I_0$.

The functor is clearly dense. On the other hand, if  $\Upsilon
(\sum_{g\in G}a_g\star g)=\Upsilon (\sum_{g\in G}b_g\star g)$, with
$a_g,b_g\in e_iAe_{g(j)}$ for some $i,j\in I_0$, then we have an
equality of formal finite sums of orbits $\sum_{g\in
G}[a_g]=\sum_{g\in G}[b_g]$. This implies that $[a_g]=[b_g]$, for
each $g\in G$, because if there is an element $\sigma\in G$ such
that $\sigma (a_g)$ and $b_h$ have the same origin an terminus, for
some $h\in G$,  then $\sigma =id$ due to the free action on objects.
But the equality $[a_g]=[b_g]$ also implies that $a_g=b_g$ since
$i(a_g)=i(b_g)=i$. Therefore $\Upsilon$ is a faithful functor.
Finally, the orbit of any homogeneous morphism $a$ in $A$ contains
an element with origin, say $i$,  in $I_0$. Then, in order to prove
that $\Upsilon$ is full,  we can assume that $[a]$ is the orbit of
an element $a\in e_iAe_{g(j)}$, for some $i,j\in I_0$ and some $g\in
G$. But then $a\star g\in e_i(A\star G)e_j$, and we clearly have
$\Upsilon (a\star g)=[a]$.

The equality of functors $\Upsilon\circ F\circ\iota =\pi$ is
straightforward.
\end{proof}

\subsection{Preservation of the pseudo-Frobenius condition}

In this final subsection we will see that the pseudo-Frobenius condition is preserved and reflected by covering functors, under fairly general conditions. Recall that a graded algebra with enough idempotents is \emph{graded semisimple} when as a (left or right) graded module over itself it is a direct sum of graded-simple modules. The following is a useful auxiliary result.

\begin{lema} \label{lem.skew group algebra}
Let $A$ be a weakly basic (H-)graded locally bounded  algebra, where $(e_i)_{i\in I}$ is a weakly basic distinguished family of orthogonal idempotents, let $G$ be a group of graded automorphisms of $A$ which acts freely on objects and let $A\star G$ the skew group algebra with its natural $H$-grading. The following assertions hold:

\begin{enumerate}

\item The graded Jacobson radical of $A\star G$ is $J^{gr}(A)\star G:=\{\sum_{g\in G}a_g\star g\in A\star G:$ $a_g\in J^{gr}(A)\text{, for all }g\in G\}$.
\item $\frac{A\star G}{J^{gr}(A\star G)}$ is isomorphic to the skew group algebra $\frac{A}{J^{gr}(A)}\star G$, which is  graded semisimple.
\item If $S$ is a graded-semisimple left (resp. right) $A$-module, then $(A\star G)\otimes_AS$ (resp. $S\otimes_A(A\star G)$) is a graded-semisimple left (resp. right)  $A\star G$-module.
\end{enumerate}
\end{lema}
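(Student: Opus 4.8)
The plan is to establish the three assertions in order, since each feeds into the next. For assertion (1), I would first show the containment $J^{gr}(A)\star G\subseteq J^{gr}(A\star G)$, and then that the quotient $\frac{A\star G}{J^{gr}(A)\star G}$ is graded semisimple, which forces equality. To get the containment, the natural tool is Proposition \ref{prop.graded Jacobson radical}(3): given a homogeneous $x=\sum_{g}a_g\star g\in e_i(A\star G)_h e_j$ with all $a_g\in J^{gr}(A)$, I must check that $e_j-yx$ is invertible in $(e_j(A\star G)_0 e_j)$ for every homogeneous $y\in e_j(A\star G)_{-h}e_i$. Here I would use the graded locally bounded hypothesis, which guarantees that $e_i(A\star G)e_j$ is finite dimensional over $K$ (the relevant sums over $G$ have finite support), so that invertibility can be tested via nilpotence of $yx$ in the finite-dimensional local-ish algebra $e_j(A\star G)_0e_j$; the point is that $yx$ lies in $J^{gr}(A)\star G$ truncated to an appropriate finite piece, which is a nilpotent ideal there. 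Dually one checks the right-hand condition.

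For the reverse direction, and simultaneously for assertion (2), I would observe that the grading-preserving surjection $A\star G\twoheadrightarrow \frac{A}{J^{gr}(A)}\star G$ (reducing each coefficient modulo $J^{gr}(A)$) has kernel exactly $J^{gr}(A)\star G$; this is a routine check that the multiplication on the skew group algebra descends, using that $J^{gr}(A)$ is a $G$-stable two-sided ideal (it is $G$-stable because $G$ acts by graded automorphisms permuting the $e_i$, and by Proposition \ref{prop.graded Jacobson radical} the graded radical is intrinsically defined). So it remains to show $\frac{A}{J^{gr}(A)}\star G$ is graded semisimple. Writing $\bar A=\frac{A}{J^{gr}(A)}$, by Proposition \ref{prop.basic graded algebra} and the weakly basic hypothesis $\bar A$ is graded semisimple, in fact $\bar A=\oplus_{i\in I}\bar A e_i$ with each $\bar A e_i$ a graded-simple left module (the split-type structure of $e_iA_0e_i/e_iJ(A_0)e_i$ as a division-type piece, together with $e_iAe_j\subseteq J$ for $i\neq j$). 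Then $\bar A\star G$ is a direct sum, over $G$-orbit representatives, of modules of the form $(\bar A\star G)(e_i\star 1)$, and each of these is a finite-dimensional (locally bounded!) graded module whose radical is $J^{gr}(\bar A\star G)(e_i\star 1)$; but by what was just shown $J^{gr}(\bar A\star G)=J^{gr}(\bar A)\star G=0$. Hence $\bar A\star G$ has zero graded radical and each indecomposable projective equals its top, i.e. it is graded semisimple. Feeding this back, the quotient $\frac{A\star G}{J^{gr}(A)\star G}\cong \bar A\star G$ has zero graded radical, which gives $J^{gr}(A\star G)\subseteq J^{gr}(A)\star G$ and completes (1) and (2).

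For assertion (3), let $S$ be a graded-semisimple left $A$-module. Since $A$ annihilates $S$ through its action factoring via $\bar A$ — more precisely $J^{gr}(A)S=0$ because $S$ is a sum of graded-simples and each graded-simple is killed by $J^{gr}(A)$ (Proposition \ref{prop.basic graded algebra}) — the $A\star G$-module $(A\star G)\otimes_A S$ is annihilated by $J^{gr}(A)\star G$, hence by $J^{gr}(A\star G)$ using assertion (1). Therefore $(A\star G)\otimes_A S$ is a graded module over $\frac{A\star G}{J^{gr}(A\star G)}$, which is graded semisimple by assertion (2), and any graded module over a graded semisimple algebra is itself graded semisimple (a standard fact: it is a quotient of a free module, and any graded module over a graded semisimple algebra is a direct sum of graded-simples — one adapts the ungraded Wedderburn argument, or cites the semisimplicity of $\bar A\star G-Gr$). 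The right-module statement follows by the left-right symmetric argument, using the left-right symmetry already built into Proposition \ref{prop.graded Jacobson radical} and the weakly basic condition.

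\textbf{Main obstacle.} The step I expect to require the most care is the containment $J^{gr}(A)\star G\subseteq J^{gr}(A\star G)$ via the invertibility criterion, because one must control the interaction between the $G$-action and the grading when computing products $yx$ inside $e_j(A\star G)_0 e_j$; the graded locally bounded hypothesis is exactly what makes the relevant pieces finite dimensional so that the radical is nilpotent and invertibility reduces to a clean nilpotence statement. Everything else is essentially bookkeeping with skew group multiplication and $G$-stability of $J^{gr}(A)$.
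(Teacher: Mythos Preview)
Your overall strategy matches the paper's: establish the containment $J^{gr}(A)\star G\subseteq J^{gr}(A\star G)$ via the invertibility criterion and a nilpotence argument, then show the quotient is graded semisimple to force equality. Your treatment of the first containment and of assertion (3) is essentially that of the paper, though your nilpotence sketch is looser than the paper's explicit pigeonhole argument (which tracks iterated products landing in the various $e_iA_0e_{g(i)}$ and uses that only finitely many $g$ make this space nonzero, together with nilpotence of $e_jJ_0e_j$).

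There is, however, a genuine circularity in your argument for the graded semisimplicity of $\bar A\star G$. You write ``by what was just shown $J^{gr}(\bar A\star G)=J^{gr}(\bar A)\star G=0$'', but what has been shown at that point is only the containment $J^{gr}(\bar A)\star G\subseteq J^{gr}(\bar A\star G)$; applied with $J^{gr}(\bar A)=0$ this yields $0\subseteq J^{gr}(\bar A\star G)$, which is vacuous. You are invoking the full assertion (1) for $\bar A$ before having proved it, and proving (1) for $\bar A$ is precisely the step you are trying to carry out. Your auxiliary claim that $(\bar A\star G)(e_i\star 1)$ is finite dimensional is also false when $G$ is infinite: this module equals $\oplus_{g\in G}\bar Ae_{g(i)}\star g$, and only each homogeneous piece $e_j(\bar A\star G)_h(e_i\star 1)$ is finite dimensional, so you cannot simply read off semisimplicity from vanishing of a radical even if you had it.

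The paper avoids this circularity by proving directly that each $(\bar A\star G)e_i$ is graded-simple. Given a nonzero homogeneous $x=\sum_g x_g\star g\in(\bar A\star G)_he_i$, one picks $\tau$ with $x_\tau\neq 0$, uses graded-simplicity of $\bar Ae_i$ to find $y_\tau\in e_i\bar A_{-h}$ with $y_\tau x_\tau^{\tau^{-1}}=e_i$, sets $y=y_\tau\star\tau^{-1}$, and then observes that the weakly basic condition ($e_i\bar A e_k=0$ for $k\neq i$) together with the free action of $G$ kills every cross-term in $yx$, leaving $yx=e_i\star 1$. Thus $(\bar A\star G)x=(\bar A\star G)e_i$, semisimplicity of $\bar A\star G$ follows without ever computing its radical, and only then does one conclude $J^{gr}(A\star G)\subseteq J^{gr}(A)\star G$.
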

\begin{proof}
We will identify $a$ with $a\star 1$, when we want to see the element $a\in A$ as an element of $A\star G$.

\vspace*{0.3cm}

1), 2)  Let us put $J=J^{gr}(A)$ and let $a\in e_iJ_he_j$ be a homogeneous element. We claim that $a\in J^{gr}(A\star G)$, from which we will derive that $J=J\star 1\subset J^{gr}(A\star G)$. It will follow that $J\star G=(J\star 1)(A\star G)\subseteq J^{gr}(A\star G)$ since $J^{gr}(A\star G)$ is a two-sided ideal of $A\star G$. We will prove that  $a(A\star G)_{-h}e_i$ consists of nilpotent elements in the algebra $e_i(A\star G)_0e_i$, which will imply that, for each $y\in e_j(A\star G)_{-h}e_i=\oplus_{g\in G}[e_jA_{-h}e_{g(i)}\star g]$, the element $e_i-ay=e_1\star 1-(a\star 1)y$ is invertible in $e_i(A\star G)_0e_i$, thus settling our claim (see Proposition \ref{prop.graded Jacobson radical}). Let us write $y=y_1\star g_1+...+y_r\star g_r$, where $y_k\in e_jA_{-h}e_{g_k(i)}$ for $k=1,...,r$, so that $ay=\sum_{1\leq k\leq r}ay_k\star g_k$. Note that $ay_k\in e_iA_0e_{g_k(i)}$, for each $k=1,...,r$. Due to the locally bounded condition of $A$ and the free action of $G$ on objects, the set $\{g\in G:$ $e_iA_0e_{g(i)}\neq 0\}$ is finite. Fix now an integer $m>0$ and consider, for each $n>0$, the sequence of products  $\{(ay_{i_1}), (ay_{i_1})(ay_{i_2})^{g_{i_1}},...,(ay_{i_1})(ay_{i_2})^{g_{i_1}}\cdot ...\cdot (ay_{i_n})^{g_{i_1}g_{i_2}...g_{i_{n-1}}}\}$. Bearing in mind that $(ay_{i_1})(ay_{i_2})^{g_{i_1}}\cdot ...\cdot (ay_{i_k})^{g_{i_1}g_{i_2}...g_{i_{k-1}}}\in e_iA_0e_{g_{i_1}g_{i_2}...g_{i_{k-1}}g_{i_k}(i)}$, for each $k>1$, and that $G$ acts freely on objects,  we can always choose $n$ large enough so that, for any choice of $i_1,...,i_n\in\{1,2,...,r\}$,  there is an index $j$ in the $G$-orbit of $i$, such that at least $m+1$ products in the last sequence are in $e_iA_0e_j$. This implies that $(ay_{i_1})(ay_{i_2})^{g_{i_1}}\cdot ...\cdot (ay_{i_n})^{g_{i_1}g_{i_2}...g_{i_{n-1}}}$ is in $e_iA_0e_j\cdot (e_jJ_0e_j)^m$. But, by proposition \ref{prop.basic graded algebra}, we know that $e_jJ_0e_j$ is the Jacobson radical of the finite dimensional algebra $e_jA_0e_j$. Choosing $m$ such that $ (e_jJ_0e_j)^m=0$, we conclude that it is always possible to choose a large enough $n>0$ such that  $(ay_{i_1})(ay_{i_2})^{g_{i_1}}\cdot ...\cdot (ay_{i_n})^{g_{i_1}g_{i_2}...g_{i_{n-1}}}=0$, for any choice of $i_1,...,i_n\in\{1,2,...,r\}$.
 It then follows that $(ay)^n=0$ since $(ay)^n$ is a sum of elements of the form  $(ay_{i_1})(ay_{i_2})^{g_{i_1}}\cdot ...\cdot (ay_{i_n})^{g_{i_1}g_{i_2}}...g_{i_{n-1}}\star g_{i_1}g_{i_1}...g_{i_n} $.

Put now $B=A/J$, which is a graded-semisimple algebra, where $B=\oplus_{i\in I}Be_i$ is a decomposition as a direct sum of graded-simple left $B$-modules (see the proof of proposition \ref{prop.basic graded algebra}). We obviously have that $\frac{A\star G}{J\star G}\cong\frac{A}{J}\star G=B\star G$. If we prove that $B\star G$ is also graded semisimple, then we will have that $0=J^{gr}(B\star G)=\frac{J^{gr}(A\star G)}{J\star G}$ and the proof of assertions 2 and 3 will be finished. Indeed we have a decomposition $B\star G=\oplus_{i\in I}(B\star G)e_i$ and the task reduces to check that $(B\star G)e_i$ is a graded-simple $B\star G$-module, for each $i\in I$. Let $0\neq x\in (B\star G)_he_i$ any homogeneous element. Then $x=\sum_{g\in G}x_g\star g$, where $x_g\in B_he_{g(i)}$ for each $g\in G$. Fix a $\tau\in G$ such that $x_\tau\neq 0$. We have that $0\neq x_\tau^{\tau^{-1}}\in B_he_i$ and the graded-simple condition of $Be_i$ as a graded $B$-module gives an element $y_\tau\in e_iB_{-h}$ such that $y_\tau x_\tau^{\tau^{-1}}=e_i$. We now take $y=y_\tau\star\tau^{-1}$, so that $yx=\sum_{g\in G}y_{\tau}x_g^{\tau^{-1}}\star\tau^{-1}g$. Note that $y_{\tau}x_g^{\tau^{-1}}\in e_iBe_{\tau^{-1} g(i)}$. By the weakly basic condition of $A$, we know that $e_iAe_j\subseteq J$ (and so $e_iBe_j=0$)  whenever $j\neq i$. We then have $y_{\tau}x_g^{\tau^{-1}} =0$, unless $\tau^{-1} g(i)=i$. By the free action of $G$ on objects, we conclude that $y_{\tau}x_g^{\tau^{-1}}\neq 0$ implies that $\tau =g$. We then get that $yx=y_{\tau}x_\tau^{\tau^{-1}}\star 1=e_i\star 1$. It follows that $(B\star G)x=(B\star G)e_i$, as desired.

\vspace*{0.3cm}

3) By assertions 1 and 2, we only need to check that $(A\star G)\otimes_AS$ is annihilated by $J\star G$. But we have an equality $J\star G=(A\star G)(J\star 1)$.  It follows that any tensor $x\otimes z$, with $x\in J\star G$ and $z\in S$, is zero in  $(A\star G)\otimes_AS$ because $JS=0$. Then we get    $(J\star G)[(A\star G)\otimes_AS]=0$.
\end{proof}

\begin{defi}
Let $A=\oplus_{h\in H}A_h$ be a graded pseudo-Frobenius algebra and
$G$ be a group acting on $A$ as graded automorphisms. A graded
Nakayama form $(-,-):A\times A\longrightarrow K$ will be called
\emph{$G$-invariant} when $(a^g,b^g)=(a,b)$, for all $a,b\in A$ and
all $g\in G$.
\end{defi}

The following result shows that, in some circunstances,  a covering functor preserves the pseudo-Frobenius condition.

\begin{prop} \label{prop.gr-Frobenius via pushdown functor}
Let $A=\oplus_{h\in H}A_h$ be a (split basic)  locally
bounded graded pseudo-Frobenius algebra, with $(e_i)_{i\in I}$ as weakly basic distinguished family of
orthogonal homogeneous idempotents,  and let $G$ be a group which
acts on $A$ as graded automorphisms which permute the $e_i$ and
which acts freely on objects. If there exists a $G$-invariant graded Nakayama form
$(-,-):A\times A\longrightarrow K$, then $\Lambda =A/G$ is a (split basic)  locally bounded
graded pseudo-Frobenius algebra whose
graded Nakayama form is induced from $(-,-)$.
\end{prop}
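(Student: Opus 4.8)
The plan is to transport the $G$-invariant Nakayama form $(-,-):A\times A\longrightarrow K$ along the canonical projection $\pi:A\longrightarrow A/G=\Lambda$ and check that the resulting bilinear form on $\Lambda$ satisfies the three conditions of Definition \ref{defi.graded Nakayama form}. Recall from Corollary \ref{cor.orbit category} that $e_{[i]}\Lambda e_{[j]}=\bigoplus_{g\in G}[e_iAe_{g(j)}]$, where $I_0\subseteq I$ is a fixed set of representatives of the $G$-orbits and we identify objects of $\Lambda$ with elements of $I_0$. First I would define, for $[a]\in e_{[i]}\Lambda e_{[j]}$ with $a\in e_iAe_{g(j)}$ and $[b]\in e_{[j]}\Lambda e_{[k]}$ with $b\in e_jAe_{g'(k)}$, the pairing $\langle [a],[b]\rangle$ to be $(a,b^{g^{-1}})$ precisely when the orbit condition for composability in $\Lambda$ is met (so that $t([a])$ and $i([b])$ match in $\Lambda$) and $0$ otherwise; one must check this is well defined, i.e. independent of the choice of orbit representatives $a,b$ — and this is exactly where $G$-invariance of $(-,-)$ is used, since changing $a$ to $a^h$ forces a compensating change in $b$ and the form is unchanged. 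This requires a small bookkeeping lemma but no real difficulty.

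Next I would verify that $\Lambda$ is again weakly basic, split basic, and locally bounded: locally boundedness of $\Lambda$ is immediate because $e_{[i]}\Lambda_h e_{[j]}$ is a direct sum over $g\in G$ of the $e_iA_he_{g(j)}$, only finitely many of which are nonzero by the locally bounded hypothesis on $A$ together with freeness of the $G$-action on objects; the split basic condition descends because $e_{[i]}\Lambda_0 e_{[i]}\supseteq e_i A_0 e_i$ contributes the local piece, and Lemma \ref{lem.skew group algebra} (applied via the equivalence $\Lambda\simeq$ weak skeleton of $A\star G$, Corollary \ref{cor.orbit category}) identifies $J^{gr}(\Lambda)$ with the image of $J^{gr}(A)$, so $e_{[i]}\Lambda e_{[j]}\subseteq J^{gr}(\Lambda)$ for $[i]\neq [j]$ and $e_{[i]}\Lambda_h e_{[i]}\subseteq J^{gr}(\Lambda)$ for $h\neq 0$ follow from the corresponding facts for $A$. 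Then I would check property (1) of Definition \ref{defi.graded Nakayama form}, $\langle xy,z\rangle=\langle x,yz\rangle$, which reduces to homogeneous elements coming from single orbits and then follows from property (1) of $(-,-)$ for $A$ after a computation tracking the twists $g,g'$ that enter the $\Lambda$-multiplication rule $[a]\cdot[b]=[ab^g]$.

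For property (2), I would use Proposition \ref{prop.uniqueness of Nakayama permutation}: the Nakayama permutation $\nu$ of $A$ satisfies $e_i\text{Soc}_{gr}(A)e_{\nu(i)}\neq 0$, and because $G$ permutes the idempotents and the form is $G$-invariant, $\nu$ is $G$-equivariant, hence descends to a bijection $\bar\nu:I_0\longrightarrow I_0$ (equivalently on $G$-orbits); one checks $e_{[i]}\text{Soc}_{gr}(\Lambda)e_{\bar\nu([i])}\neq 0$ and that this is the only orbit $[j]$ with $e_{[i]}\Lambda e_{[j]}$ pairing nontrivially, using again that $J^{gr}(\Lambda)$ is the image of $J^{gr}(A)$. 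Property (3), nondegeneracy of the induced form $e_{[i]}\Lambda e_{[j]}\times e_{[j]}\Lambda e_{\bar\nu([i])}\longrightarrow K$ with degree $\mathbf{h}([i])=\mathbf{h}(i)$, is the technical heart: one must show that for $0\neq [a]\in e_{[i]}\Lambda_h e_{[j]}$ there is $[b]\in e_{[j]}\Lambda e_{\bar\nu([i])}$ with $\langle[a],[b]\rangle\neq 0$. Writing $[a]=[a_0]$ with $0\neq a_0\in e_iA_h e_{g(j)}$, nondegeneracy of the Nakayama form for $A$ on $e_iAe_{g(j)}\times e_{g(j)}Ae_{\nu(i)}$ supplies $b_0\in e_{g(j)}Ae_{\nu(i)}$ with $(a_0,b_0)\neq 0$; transporting $b_0$ back via the appropriate twist to an element representing a morphism $[j]\to\bar\nu([i])$ in $\Lambda$ gives the desired $[b]$, and one checks no cancellation occurs among the different $g$-components because distinct $g$ land $a_0$ in $e_iAe_{g(j)}$ for distinct terminal objects $g(j)$, which are all distinct by freeness. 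The main obstacle I anticipate is precisely this last nondegeneracy argument: one has to be careful that passing to orbits does not create spurious pairings or destroy nondegeneracy through the $G$-twisting, and the cleanest route is probably to pass through the weak skeleton of $A\star G$ and the pushdown functor $F_\lambda$ of Proposition \ref{prop.G-covering functor}, which sends $Ae_i\rightsquigarrow\Lambda e_{[i]}$, so that the socle and injectivity statements for $\Lambda$-modules are read off from those for $A$-modules via an exact functor; then condition (4) (resp.\ (3)) of Theorem \ref{teor.graded pseudo-Frobenius algebras} for $\Lambda$ follows, and the explicit form on $\Lambda$ is recovered as above.
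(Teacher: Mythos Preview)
Your proposal is correct and follows essentially the same route as the paper: define the induced form on $\Lambda$ by $\langle[a],[b]\rangle=(a,b^g)$ with $g$ the unique twist aligning $t(a)$ and $i(b)$, verify the weakly basic/split/locally bounded conditions via Lemma~\ref{lem.skew group algebra} and the equivalence with the weak skeleton of $A\star G$, then check the three axioms of Definition~\ref{defi.graded Nakayama form} directly. One small slip: in your formula the twist should be $b^g$ rather than $b^{g^{-1}}$ (with $a\in e_iAe_{g(j)}$ and $b\in e_jAe_{g'(k)}$ you need $i(b^h)=t(a)$, i.e.\ $h=g$); otherwise your bookkeeping, including the care you take over nondegeneracy and the non-interference of distinct $g$-components via freeness, is exactly what the paper does (the paper in fact states the nondegeneracy more tersely than you do).
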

\begin{proof}
We put $\pi:=F\circ\iota$, where $F$ and $\iota$ are as in
proposition \ref{prop.G-covering functor}.  We then know that $\pi$
is surjective on objects and each homogeneous morphism in
$\Lambda$ is a sum of homogeneous morphisms of the form $\pi (a)$,
with $a\in\bigcup_{i,j\in I}e_iAe_j$. We will put $\pi (i)=[i]$ and
$\pi (a)=[a]$, for each $i\in I$ and homogeneous element
$a\in\bigcup_{i,j\in I}e_iAe_j$. Note that $[i]$ and $[a]$ can be
identified with the $G$-orbits of $i$ and $a$ (see corollary
\ref{cor.orbit category}).

We first check that $\Lambda$ is weakly basic.
The functor $F$, which is an equivalence of categories, gives an
isomorphism of algebras $e_i(A\star G)_0e_i\cong e_{[i]}\Lambda_0
e_{[i]}$, for each $i\in I$. But we have $e_i(A\star
G)_0e_i=\oplus_{g\in G}e_iA_0e_{g(i)}\star g$. This algebra is
finite dimensional due to the graded locally bounded condition of
$A$ and the fact that $G$ acts freely on objects. Then all nilpotent
elements of $e_i(A\star G)_0e_i$ belong to its Jacobson radical. It
follows that $\mathbf{m}:=e_iJ(A_0)e_i\oplus (\oplus_{g\neq
1}e_iA_0e_{g(i)}\star g)$ is contained in $J(e_i(A\star G)_0e_i)$
since, due again to the graded locally bounded condition of $A$ and
the free action of $G$, we know that $\mathbf{m}$ consists of
nilpotent elements. Since $\frac{e_i(A\star
G)_0e_i}{\mathbf{m}}\cong\frac{e_iA_0e_i}{e_iJ(A_0)e_i}$ is a
division algebra, we conclude that $\mathbf{m}=J(e_i(A\star
G)_0e_i)$ and that $e_{[i]}\Lambda_0 e_{[i]}\cong e_i(A\star
G)_0e_i$ is a local algebra. Moreover, we have that
$\frac{e_{[i]}\Lambda_0e_{[i]}}{e_{[i]}J(\Lambda_0)e_{[i]}}\cong\frac{e_i(A\star
G)_0e_i}{e_iJ((A\star G)_0)}\cong\frac{e_iA_0e_i}{e_iJ(A_0)e_i}$, so
that $\Lambda$ is split whenever $A$ is so.

We next prove that $e_{[i]}\Lambda_he_{[j]}\subset J^{gr}(\Lambda )$,
whenever $[i]\neq [j]$ or $A$ is basic and $h\neq 0$. This  will give the weakly basic condition of $\Lambda$ and, in case $A$ is basic, also its basic condition. But that amounts to prove that $e_i(A\star
G)_he_j=\oplus_{g\in G}[e_iA_he_{g(j)}\star g]\subset J^{gr}(A\star G)$,  whenever $[i]\neq [j]$ of $A$ is basic and $h\neq 0$, because the pushdown functor
$F_\lambda:A\star G-Gr\longrightarrow\Lambda-Gr$ is an equivalence of
categories. The desired fact follows directly from Lemma \ref{lem.skew group algebra} and the definition of weakly basic and basic algebra.

We pass to define the graded Nakayama form for $\Lambda$. We will
define first graded bilinear forms $<-,->:e_{[i]}\Lambda
e_{[j]}\times e_{[k]}\Lambda e_{[l]}\longrightarrow K$, for all
objects  $[i]$, $[j]$, $[k]$ and $[l]$ of $\Lambda$. When $[j]\neq
[k]$ the bilinear form is zero. In case $[j]=[k]$, we need to define
$<\pi (a),\pi (b)>$ whenever $a\in \oplus_{g,g'\in
G}e_{g(i)}Ae_{g'(j)}$ and $b\in \oplus_{g,g'\in
G}e_{g(j)}Ae_{g'(l)}$. We define $<\pi (a),\pi (b)>$ when $a,b\in
\bigcup_{i,l\in I}e_iAe_l$, with $[t(a)]=[i(b)]=[j]$ and then extend
by $K$-bilinearity to the general case. Indeed we define
$<[a],[b]>=(a,b^g)$, where $g\in G$ satisfies  that $g(i(b))=t(a)$.
Note that $g$ is unique since $G$ acts freely on objects. We leave
to the reader the routine task of checking that
$<-,->:e_{[i]}\Lambda e_{[j]}\times e_{[j]}\Lambda e_{[k]}$ is
well-defined. The graded bilinear form
$<-,->:\Lambda\times\Lambda\longrightarrow K$ is defined as the
'direct sum' of the just defined graded bilinear forms.

We next check that it satisfies all the conditions of definition
\ref{defi.graded Nakayama form}. We first check condition 2 in that
definition. Let $x,y\in\bigcup_{[i],[j]}e_{[i]}\Lambda e_{[j]}$ be
such that $<x,y>\neq 0$. Then we know that there is $j\in I$ such
that $t(x)=[j]=i(y)$. Fix such index $j\in I$. Since the functor
$\pi :A\longrightarrow\Lambda$ is covering it gives bijections
$\oplus_{g\in
G}e_{g(i)}Ae_j\stackrel{\cong}{\longrightarrow}e_{[i]}\Lambda
e_{[j]}$ and $\oplus_{g\in
G}e_{j}Ae_{g(k)}\stackrel{\cong}{\longrightarrow}e_{[j]}\Lambda
e_{[k]}$, for all $G$-orbits of indices $[j]$ and $[k]$. We then put
$x=\sum_{g\in G}\pi (a_g)$ and $y=\sum_{g\in G}\pi (b_g)$ such that
$a_g\in e_{g(i)}Ae_j$ and $b_g\in e_jAe_{g(k)}$, for all $g\in G$.
By definition of $<-,->$, we then have $0\neq <x,y>=\sum_{g,g'\in
G}(a_g,b_{g'})$, which implies that there are $g,g'\in G$ such that
$(a_g,b_{g'})\neq 0$. This implies that $g'(k)=\nu (g(i))$, where
$\nu$ is the Nakayama permutation associated to $(-,-)$. But, due to
the $G$-invariant condition of  $(-,-)$, we have that $\nu
(g(i))=g(\nu (i))$. This shows that $[k]=[\nu (i)]$. It  follows
that $<e_{[i]}\Lambda ,\Lambda e_{[k]}>\neq 0$ implies that
$[k]=[\nu (i)]$. Therefore assertion 2 of definition
\ref{defi.graded Nakayama form} holds, and the bijection
$\bar{\nu}:I/G\stackrel{\cong}{\longrightarrow}I/G$ maps
$[i]\rightsquigarrow [\nu (i)]$.

The $G$-invariance of  $(-,-)$ also implies that if
$\mathbf{h}:I\longrightarrow H$ is the degree function associated to
$(-,-)$,  then $h(g(i))=h(i)$ $\forall i\in I$. As a consequence,
the graded bilinear form $<-,->:e_{[i]}\Lambda\times\Lambda e_{[\nu
(i)]}\longrightarrow K$ is of degree $h_i:=\mathbf{h}(i)$, for each
$i\in I$. Moreover, for each $j\in I$,  the induced graded bilinear form  $<-,->:e_{[i]}\Lambda e_{[j]}\times e_{[j]}\Lambda e_{[\nu
(i)]}\longrightarrow K$  is nondegenerate since so is $(-,-):e_iAe_{g(j)}\times e_{g(j)}Ae_{\nu (i)}\longrightarrow K$, for each $g\in G$.
Then condition 3 of Definition \ref{defi.graded Nakayama form} is satisfied by $<-,->$, by taking  $\bar{\mathbf{h}}:I/G\longrightarrow H$,
$[i]\rightsquigarrow h_i$, as the degree map.

It remains to check that $<xy,z>=<x,yz>$, for all $x,y,z\in\Lambda$.
For that, it is not restrictive to assume that $x=[a]$, $y=[b]$ and
$z=[c]$, where $a,b,c$ are homogeneous elements in $\bigcup_{i,j\in
I}e_iAe_j$. In such a case, note that if one member of the desired
equality $<xy,z>=<x,yz>$ is nonzero, then $t(x)=i(y)$ and
$t(y)=i(z)$ or, equivalently, $[t(a)]=[i(b)]$ and $[t(b)]=[i(c)]$.
If this holds, then we have

\begin{center}
$<xy,z>=<[a][b],[c]>=<[ab^g],[c]>=(ab^g,c^{g'})$,
\end{center}
where $g,g'\in G$ are the elements such that $g(i(b))=t(a)$ and
$g'(i(c))=g(t(b))$. Note that then $(g^{-1}g')(i(c))=t(b)$ and,
hence,  we also have

\begin{center}
$<x,yz>=<[a],[b][c]>=<[a],[bc^{g^{-1}g'}]>=(a,(bc^{g^{-1}g'})^g)=(a,b^gc^{g'})$.
\end{center}
The equality $<xy,z>=<x,yz>$ follows then from the fact that
$(-,-):A\times A\longrightarrow K$ is a graded Nakayama form for $A$.
\end{proof}

The following is an easy consequence of last proposition
 and its proof. We leave
the proof to the reader.

\begin{cor} \label{cor.preservation Nak-automorphism by covering}
Let $A=\oplus_{h\in H}A_h$ be a weakly basic locally bounded graded pseudo-Frobenius
algebra and let  $(-,-):A\times A\longrightarrow K$  be a
$G$-invariant graded Nakayama form. The following assertions hold:

\begin{enumerate}
\item If $\eta :A\longrightarrow A$ is the  Nakayama
automorphism associated to $(-,-)$, then $\eta\circ g=g\circ\eta$,
for all $g\in G$ \item Let
$<-,->:\Lambda\times\Lambda\longrightarrow K$ be the graded Nakayama
form induced from $(-,-)$ and let
$\bar{\eta}:\Lambda\longrightarrow\Lambda$ be the associated
Nakayama automorphism. Then $\bar{\eta}([a])=[\eta (a)]$ for each
$a\in\bigcup_{i,j}e_iAe_j$.
\end{enumerate}
\end{cor}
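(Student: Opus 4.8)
The plan is to derive both assertions directly from the construction of $\bar\eta$ and the explicit description of the induced Nakayama form $\langle-,-\rangle$ on $\Lambda=A/G$ given in the proof of Proposition \ref{prop.gr-Frobenius via pushdown functor}, together with the defining relation $(a,-)=(-,\eta(a))$ of the Nakayama automorphism from Corollary \ref{cor.Nakayama automorphism}. So first I would recall that, for homogeneous $a,b\in\bigcup_{i,j}e_iAe_j$ with $[t(a)]=[i(b)]$, one has $\langle[a],[b]\rangle=(a,b^{g})$ where $g\in G$ is the unique element with $g(i(b))=t(a)$, and that this determines $\langle-,-\rangle$ by $K$-bilinearity.

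For assertion 1, I would take a homogeneous element $a\in e_iAe_j$ and $g\in G$, and show $\eta(g(a))=g(\eta(a))$. The cleanest route is to use the characterization $(x,\eta(a))=(a,x)$ for all $x\in A$: replacing $a$ by $a^{g}$ gives $(x,\eta(a^{g}))=(a^{g},x)$; on the other hand, using $G$-invariance of $(-,-)$, $(a^{g},x)=(a,x^{g^{-1}})=(x^{g^{-1}},\eta(a))=((x^{g^{-1}})^{g},\eta(a)^{g})=(x,\eta(a)^{g})$, where the middle step again invokes $G$-invariance. Since this holds for all $x$ and the form is nondegenerate (it induces an isomorphism $A_A\cong D(A)_A$), we conclude $\eta(a^{g})=\eta(a)^{g}$, i.e. $\eta\circ g=g\circ\eta$ on homogeneous elements of $\bigcup_{i,j}e_iAe_j$, hence on all of $A$ by bilinearity. (One also checks the degree-$0$ and idempotent-permutation claims are consistent, but those follow automatically.)

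For assertion 2, recall from the proof of Corollary \ref{cor.Nakayama automorphism} that the Nakayama automorphism $\bar\eta$ of $\Lambda$ attached to $\langle-,-\rangle$ is the unique $K$-linear map with $\langle x,\bar\eta(z)\rangle=\langle z,x\rangle$ for all $x,z\in\Lambda$. So it suffices to verify that $[\eta(a)]$ satisfies this identity in the place of $\bar\eta([a])$, for each homogeneous $a\in e_iAe_j$; by nondegeneracy of $\langle-,-\rangle$ this forces $\bar\eta([a])=[\eta(a)]$. Taking $z=[a]$ and $x=[c]$ with $c$ homogeneous and the orbits matching up appropriately (otherwise both sides vanish), I would expand $\langle[c],[\eta(a)]\rangle$ using the explicit formula: it equals $(c,\eta(a)^{g'})$ for the appropriate $g'\in G$; using assertion 1 this is $(c,\eta(a^{g'}))=(a^{g'},c)$; and a parallel computation identifies $\langle[a],[c]\rangle$ with $(a,c^{g''})$ for the matching $g''$, which by $G$-invariance equals $(a^{g'},c^{g'g''})$. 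Matching the group elements that appear—this is where the unique-$g$ bookkeeping in the orbit category has to be done carefully—shows the two expressions agree, giving $\langle[c],[\eta(a)]\rangle=\langle[a],[c]\rangle$ as required.

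The main obstacle is purely bookkeeping: tracking the unique group elements $g$ supplied by the free action on objects through the composition/anticomposition rule of $A/G$, so that the group elements appearing in $(a,\eta(a)^{g'})$-type expressions line up correctly with those in the definition of $\langle-,-\rangle$. There is no conceptual difficulty once assertion 1 ($\eta$ commutes with $G$) is in hand, and the $G$-invariance of $(-,-)$ together with the nondegeneracy of both forms does all the real work; this is why the statement can reasonably be left to the reader.
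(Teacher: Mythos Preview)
Your proposal is correct and is precisely the argument the paper has in mind; indeed the paper gives no proof at all for this corollary, stating only that it is ``an easy consequence of last proposition and its proof'' and leaving it to the reader. Your use of the defining relation $(a,-)=(-,\eta(a))$ together with $G$-invariance to obtain $\eta(a^{g})=\eta(a)^{g}$, and then the verification that $[\eta(a)]$ satisfies the defining identity of $\bar\eta([a])$ via the explicit formula $\langle[a],[b]\rangle=(a,b^{g})$, is exactly the intended route; the only thing to pin down in assertion~2 is that the two group elements $g',g''$ you introduce satisfy $g'g''=1$, which follows from $g''(t(c))=\nu(i)$ (forced by nonvanishing of the form) and the defining condition $g'(\nu(i))=t(c)$.
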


The following result completes the last proposition by showing how
to construct $G$-invariant graded Nakayama forms in the split case.

\begin{cor} \label{cor.G-invariant basis and Nakayama form}
Let $A=\oplus_{h\in H}A_h$ be a split   graded pseudo-Frobenius
algebra and let $G$ be a group of graded automorphisms of $A$ which
permute the $e_i$ and acts freely on objects. There exist an element
$\mathbf{h}=(h_i)_{i\in I}\in\prod_{i\in I}Supp(e_iSoc_{gr}(A))$ and
basis $\mathcal{B}_i$ of $e_iA_{h_i}e_{\nu (i)}$, for each $i\in I$, satisfying
the following properties:

\begin{enumerate}
\item $h_i=h_{g(i)}$, for all $i\in I$ \item
$g(\mathcal{B}_i)=\mathcal{B}_{g(i)}$ and $\mathcal{B}_i$ contains an
element of $e_iSoc_{gr}(A)$, for all $i\in I$
\end{enumerate}
 In such case the graded Nakayama form  associated to the pair $(\mathcal{B},\mathbf{h})$
 (see definition \ref{defi.graded Nakayama form associated to basis})
is $G$-invariant.
\end{cor}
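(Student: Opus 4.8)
The plan is to make arbitrary choices of degrees and bases on a chosen set of $G$-orbit representatives and then spread them out equivariantly; the preliminary point is that the Nakayama permutation commutes with the $G$-action.

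First I would check that $g\circ\nu=\nu\circ g$ for every $g\in G$. Each $g$ is a graded automorphism of degree $0$ permuting the $e_i$, hence it permutes the graded-simple left (and right) submodules of $A$ and thus stabilizes $\text{Soc}_{gr}(A)$ (Proposition \ref{prop.uniqueness of Nakayama permutation}(1)). Consequently $g\bigl(e_i\text{Soc}_{gr}(A)e_{\nu(i)}\bigr)=e_{g(i)}\text{Soc}_{gr}(A)e_{g(\nu(i))}\neq 0$, and Proposition \ref{prop.uniqueness of Nakayama permutation}(2) forces $\nu(g(i))=g(\nu(i))$. The same observations show that $g$ restricts to degree-zero $K$-linear isomorphisms $e_i\text{Soc}_{gr}(A)\stackrel{\cong}{\longrightarrow}e_{g(i)}\text{Soc}_{gr}(A)$ and, using $g\nu=\nu g$, $e_iA_he_{\nu(i)}\stackrel{\cong}{\longrightarrow}e_{g(i)}A_he_{\nu(g(i))}$ for all $h\in H$; and since by Proposition \ref{prop.uniqueness of Nakayama permutation}(2) one has $e_i\text{Soc}_{gr}(A)e_{i'}=0$ whenever $i'\neq\nu(i)$, it follows that $e_i\text{Soc}_{gr}(A)=e_i\text{Soc}_{gr}(A)e_{\nu(i)}\subseteq e_iAe_{\nu(i)}$.

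Next, fix a set $I_0\subseteq I$ of representatives of the $G$-orbits; since $G$ acts freely on objects, every $i\in I$ is uniquely of the form $i=g\cdot i_0$ with $i_0\in I_0$ and $g\in G$. For each $i_0\in I_0$ pick $h_{i_0}\in\text{Supp}(e_{i_0}\text{Soc}_{gr}(A))$, which is nonempty because, by Theorem \ref{teor.graded pseudo-Frobenius algebras} and Definition \ref{defi.pseudo-Frobenius graded algebras}, $e_{i_0}A$ is a finitely cogenerated injective object and so has nonzero graded socle; by Proposition \ref{prop.graded Nak-form via basis}(1) the space $(e_{i_0}\text{Soc}_{gr}(A))_{h_{i_0}}$ is one-dimensional, say $=Kw_{i_0}$, and it lies in $e_{i_0}A_{h_{i_0}}e_{\nu(i_0)}$ by the first step; extend $\{w_{i_0}\}$ to a basis $\mathcal{B}_{i_0}$ of $e_{i_0}A_{h_{i_0}}e_{\nu(i_0)}$. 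Now define $h_{g\cdot i_0}:=h_{i_0}$ and $\mathcal{B}_{g\cdot i_0}:=g(\mathcal{B}_{i_0})$ for all $g\in G$. By the isomorphisms of the first step, $\mathcal{B}_{g\cdot i_0}$ is a basis of $e_{g\cdot i_0}A_{h_{g\cdot i_0}}e_{\nu(g\cdot i_0)}$ containing the socle element $g(w_{i_0})\in e_{g\cdot i_0}\text{Soc}_{gr}(A)$, so $\mathbf{h}=(h_i)_{i\in I}\in\prod_{i\in I}\text{Supp}(e_i\text{Soc}_{gr}(A))$ and $\mathcal{B}=(\mathcal{B}_i)_{i\in I}$ meet the requirements of Definition \ref{defi.graded Nakayama form associated to basis} as well as conditions (1) and (2) of the statement: indeed $h_{g\cdot(g'\cdot i_0)}=h_{(gg')\cdot i_0}=h_{i_0}$ and $g(\mathcal{B}_{g'\cdot i_0})=(gg')(\mathcal{B}_{i_0})=\mathcal{B}_{(gg')\cdot i_0}$. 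Note also that, by uniqueness of the socle element inside such a basis (Proposition \ref{prop.graded Nak-form via basis}(2), condition b.i), we necessarily have $w_{g\cdot i_0}=g(w_{i_0})$ for all $g\in G$.

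Finally I would verify $G$-invariance of the graded Nakayama form $(-,-)$ associated to $(\mathcal{B},\mathbf{h})$. By bilinearity it is enough to treat homogeneous $a\in e_iA_he_j$ and $b\in e_jA_ke_{j'}$. If $(a,b)\neq 0$, condition b.ii of Proposition \ref{prop.graded Nak-form via basis} gives $j'=\nu(i)$ and $h+k=h_i$; applying $g$ and using $g\nu=\nu g$ together with $h_{g(i)}=h_i$, the same conditions hold for $a^g\in e_{g(i)}A_he_{g(j)}$ and $b^g\in e_{g(j)}A_ke_{g(j')}$, so $(a^g,b^g)=0$ precisely when $(a,b)=0$. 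In the remaining case $ab\in e_iA_{h_i}e_{\nu(i)}$; writing $ab=\sum_{c\in\mathcal{B}_i}\lambda_c c$, condition b.iii gives $(a,b)=\lambda_{w_i}$, while $a^gb^g=(ab)^g=\sum_{c\in\mathcal{B}_i}\lambda_c\,g(c)$ with $c\mapsto g(c)$ a bijection $\mathcal{B}_i\to\mathcal{B}_{g(i)}$ sending $w_i$ to $w_{g(i)}$, so applying b.iii once more gives $(a^g,b^g)=\lambda_{w_i}=(a,b)$. Hence $(-,-)$ is $G$-invariant. The one place that demands genuine care is the first step — establishing that $\nu$, the graded socle, and the chosen socle lines are all $G$-stable in a compatible way; once that is in place, the construction in the third paragraph and the computation above are a routine unwinding of Definition \ref{defi.graded Nakayama form associated to basis} and Proposition \ref{prop.graded Nak-form via basis}.
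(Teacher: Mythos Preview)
Your proof is correct and follows essentially the same approach as the paper's: choose data on a set of orbit representatives and transport it by $G$. You supply more detail than the paper does, in particular the explicit verification that $\nu$ commutes with the $G$-action and the line-by-line check of $G$-invariance via condition b.iii of Proposition \ref{prop.graded Nak-form via basis}; the paper uses these facts implicitly.
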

\begin{proof}
We fix a subset $I_0\subseteq I$ which is a set of representatives
of the $G$-orbits of objects. Then the assignment
$i\rightsquigarrow [i]$ defines a bijection between $I_0$ and  the
set of objects of $\Lambda=A/G$. For each $i\in I_0$, we fix an $h_i\in
Supp(e_iSoc_{gr}(A))$ and a basis $\mathcal{B}_i$ of $e_iA_{h_i}e_{\nu (i)}$
containing an element $w_i\in e_iSoc_{gr}(A)$, for each $i\in I_0$.
Note that $g(e_iSoc_{gr}(A))=e_{g(i)}Soc_{gr}(A)$  since $G$
consists of graded automorphisms. It then follows that $h_i\in Supp
(e_{g(i)}Soc_{gr}(A))$. Given $j\in I$, the free action of $G$ on
objects implies that there are unique elements $i\in I_0$ and $g\in
G$ such that $g(i)=j$. We then define $h_j=h_i$ and
$\mathcal{B}_j=g(\mathcal{B}_i)$, whenever $j=g(i)$, with $i\in
I_0$. Note that $\mathcal{B}_j$ contains the element $g(w_i)$ of
$e_jSoc_{gr}(A)$. It is now clear that $\mathbf{h}=(h_j)_{j\in I}$
is in $\prod_{j\in I}Supp (e_jSoc_{gr}(A))$ and that $\mathcal{B}_j$
is a basis of $e_jA_{h_j}e_{\nu (j)}$ containing an element of
$e_jSoc_{gr}(A)$, for each $j\in I$. It is also clear that if
$\mathcal{B}:=\bigcup_{j\in I}\mathcal{B}_j$ then
$g(\mathcal{B})=\mathcal{B}$, for all $g\in G$.

By definition of the graded Nakayama form $(-,-):A\times
A\longrightarrow K$ associated to $(\mathcal{B},\mathbf{h})$ (see
definition \ref{defi.graded Nakayama form associated to basis}) and
the fact that $w_j=g(w_j)=w_{g(j)}$, for all $g\in G$ and $j\in I$,
we easily conclude that $(-,-)$ is $G$-invariant.
\end{proof}

We are ready to prove the main result of this section.

\begin{teor} \label{teor.lifting PF in split case}
Let $A$ be a split weakly basic locally bounded graded algebra with enough idempotents and let $G$ be a group of graded automorphisms acting freely on objects. The following assertions are equivalent:

\begin{enumerate}
\item $A$ is pseudo-Frobenius;
\item $\Lambda =A/G$ is pseudo-Frobenius.
\end{enumerate}
\end{teor}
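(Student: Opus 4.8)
The plan is to treat the two implications separately: $1)\Rightarrow 2)$ is a short deduction from results already proved, whereas $2)\Rightarrow 1)$ carries the weight. For $1)\Rightarrow 2)$ I would argue that, since $A$ is split and graded pseudo-Frobenius, Corollary~\ref{cor.G-invariant basis and Nakayama form} furnishes a $G$-invariant graded Nakayama form $(-,-)\colon A\times A\longrightarrow K$, and then Proposition~\ref{prop.gr-Frobenius via pushdown functor} applies and yields that $\Lambda=A/G$ is a locally bounded graded pseudo-Frobenius algebra.

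For $2)\Rightarrow 1)$ I would work with the covering functor $\pi\colon A\longrightarrow\Lambda$ of Proposition~\ref{prop.G-covering functor}(2) — so that $\Lambda\cong A/G$ by Corollary~\ref{cor.orbit category} — together with its pullup functors $\pi^\rho\colon\Lambda-Gr\longrightarrow A-Gr$ and $\pi^\rho\colon Gr-\Lambda\longrightarrow Gr-A$. First I would record that $\Lambda$ is again weakly basic (this is exactly the first part of the proof of Proposition~\ref{prop.gr-Frobenius via pushdown functor}, which does not use the pseudo-Frobenius hypothesis) and locally bounded (because $e_{[i]}\Lambda_he_{[j]}\cong\bigoplus_{g\in G}e_iA_he_{g(j)}$ by the covering property, and this sum is finite thanks to the local boundedness of $A$ and the freeness of the $G$-action). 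The strategy is then to verify condition $2)$ of Theorem~\ref{teor.graded pseudo-Frobenius algebras} for $A$, namely that $D({}_AA)$ and $D(A_A)$ are projective, which by that theorem proves that $A$ is pseudo-Frobenius.

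The heart of the argument consists of three computations. First, for each $i\in I$ one has a graded isomorphism $\pi^\rho(\Lambda e_{[i]})\cong\bigoplus_{g\in G}Ae_{g(i)}$ in $A-Gr$, given by $a\in Ae_{g(i)}\mapsto\pi(a)$, which is $A$-linear because $\pi$ is a functor and bijective by the covering property; moreover this module lies in $A-lfdgr$, again by local boundedness plus freeness, and the symmetric statement $\pi^\rho(e_{[i]}\Lambda)\cong\bigoplus_{g\in G}e_{g(i)}A$ holds in $Gr-A$. Second, if $(M_\alpha)$ is any family of locally finite dimensional graded $A$-modules, then $D\bigl(\bigoplus_\alpha M_\alpha\bigr)=\bigoplus_\alpha D(M_\alpha)$, by the same argument used for $D(A_A)=\bigoplus_{i}D(e_iA)$ in the proof of Theorem~\ref{teor.graded pseudo-Frobenius algebras}. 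Third, $D\bigl(\pi^\rho(\Lambda e_{[i]})\bigr)\cong\pi^\rho\bigl(D(\Lambda e_{[i]})\bigr)$ in $Gr-A$: by the first point, at the spot $(j,h)$ both sides are identified with $\text{Hom}_K\bigl(e_{[j]}\Lambda_{-h}e_{[i]},K\bigr)$, and $\varphi\mapsto\bigl(a\mapsto\varphi(\pi(a))\bigr)$ is a right-$A$-linear graded isomorphism. Granting these, the deduction runs as follows. Since $\Lambda$ is locally bounded and pseudo-Frobenius, Theorem~\ref{teor.graded pseudo-Frobenius algebras} together with the duality $D$ and Proposition~\ref{prop.basic graded algebra} give $D(\Lambda e_{[i]})\cong e_{[j]}\Lambda[h]$ for suitable $[j],h$; hence $\pi^\rho(D(\Lambda e_{[i]}))\cong\bigl(\bigoplus_{g\in G}e_{g(j)}A\bigr)[h]$ is projective in $Gr-A$. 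On the other hand, combining the three computations,
\[
\pi^\rho\bigl(D(\Lambda e_{[i]})\bigr)\cong D\bigl(\pi^\rho(\Lambda e_{[i]})\bigr)=D\Bigl(\bigoplus_{g\in G}Ae_{g(i)}\Bigr)=\bigoplus_{g\in G}D(Ae_{g(i)}),
\]
so $\bigoplus_{g\in G}D(Ae_{g(i)})$ is projective in $Gr-A$, whence each summand, in particular $D(Ae_i)$, is projective in $Gr-A$. Since every index is of the form $g(i)$ and, by local boundedness, $D({}_AA)=\bigoplus_{i\in I}D(Ae_i)$, it follows that $D({}_AA)$ is projective; the symmetric argument (using $e_{[i]}\Lambda$ in place of $\Lambda e_{[i]}$) shows $D(A_A)$ is projective in $A-Gr$, and Theorem~\ref{teor.graded pseudo-Frobenius algebras} concludes that $A$ is pseudo-Frobenius.

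I expect the main obstacle to be the bookkeeping in the first and third computations: one must keep careful track of left versus right module structures through the covering functor $\pi$ (and, if one unwinds $\pi$ through $A\star G$, through the skew group algebra as well), and the hypotheses that $A$ be locally bounded and that $G$ act freely on objects must be invoked at precisely the right places to guarantee that every direct sum appearing is locally finite dimensional — this is exactly what makes the second and third computations valid, and without it the identification of $D$ with direct sums, hence the whole argument, would break. Everything else is formal manipulation with the functor $D$ and the characterizations in Theorem~\ref{teor.graded pseudo-Frobenius algebras}.
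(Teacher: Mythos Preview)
Your argument for $1)\Rightarrow 2)$ coincides with the paper's. Your argument for $2)\Rightarrow 1)$ is correct but follows a genuinely different route. The paper lifts the graded Nakayama form $\langle-,-\rangle$ of $\Lambda$ to a bilinear form $(-,-)$ on $A$ by setting $(a,b)=\langle\pi(a),\pi(b)\rangle$ and then verifies the axioms of Definition~\ref{defi.graded Nakayama form} directly; the delicate point is to show that for each $i$ there is a \emph{unique} $\mu(i)\in I$ with $(e_iA,Ae_{\mu(i)})\neq 0$, which the paper handles by a socle computation passing through $A\star G$ and using Proposition~\ref{prop.graded Nak-form via basis} (hence the split hypothesis). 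Your approach bypasses the construction of a Nakayama form entirely and instead checks condition~2 of Theorem~\ref{teor.graded pseudo-Frobenius algebras} via the pullup functor and the natural isomorphism $D_A\circ\pi^\rho\cong\pi^\rho\circ D_\Lambda$. This is cleaner and more functorial; as a bonus, your $2)\Rightarrow 1)$ direction never uses the split hypothesis, so it actually proves a slightly stronger statement than the paper's (split is only needed for $1)\Rightarrow 2)$, to produce a $G$-invariant form). The paper's approach, on the other hand, has the advantage of being constructive: it exhibits the Nakayama form of $A$ explicitly as a lift of that of $\Lambda$, which is useful for the applications in Corollary~\ref{cor.preservation Nak-automorphism by covering}.

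One small correction: your second computation, as stated, is false. It is not true that $D(\bigoplus_\alpha M_\alpha)=\bigoplus_\alpha D(M_\alpha)$ for an arbitrary family of locally finite dimensional modules (take $A=K$, $H=0$, and infinitely many copies of $K$). What you need, and what actually holds in your situation, is the extra hypothesis that for each $(j,h)\in I\times H$ only finitely many $\alpha$ satisfy $e_j(M_\alpha)_h\neq 0$; this is exactly what local boundedness of $A$ together with the free $G$-action guarantee for the family $(Ae_{g(i)})_{g\in G}$, and it is indeed the same mechanism behind the identity $D(A_A)=\bigoplus_i D(e_iA)$ that you cite. With that hypothesis added, the computation goes through and the rest of your argument is sound.
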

\begin{proof}
$1)\Longrightarrow 2)$ By Corollary \ref{cor.G-invariant basis and Nakayama form}, we have a $G$-invariant graded Nakayama form $(-,-):A\times A\longrightarrow K$. Now Proposition \ref{prop.gr-Frobenius via pushdown functor} gives this implication.

$2)\Longrightarrow 1)$ Let $<-,->:\Lambda\times\Lambda\longrightarrow K$ be a graded Nakayama form for $\Lambda$  and let $\pi :A\longrightarrow A/G=\Lambda$ the canonical functor. We shall define a bilineal form $(-,-):A\times A\longrightarrow K$ as the 'direct sum' of  bilinear forms $(-,-):e_iA_he_j\times e_kA_{h'}e_l\longrightarrow K$, where $h,h'\in H$ and $i,j,k,l\in I$. By definition, this last bilinear form is zero when $j\neq k$ and is the composition

\begin{center}
$(-,-):e_iA_he_j\times e_jA_{h'}e_l\stackrel{\pi\times\pi}{\longrightarrow}e_{[i]}\Lambda_h e_{[j]}\times e_{[j]}\Lambda_{h'}e_{[l]}\stackrel{<-,->}{\longrightarrow}K$,
\end{center}
when $j=k$.
Note that this last bilinear form is zero when $[l]\neq \nu ([i])$ or $h+h'\neq h_{[i]}=\mathbf{h}([i])$, where $\nu$ and $\mathbf{h}$ are the Nakayama permutation and the degree map associated to $<-,->$. We then get that the induced bilinear form $(-,-):e_iAe_j\times e_jAe_l$ is either zero, when $[l]\neq\nu ([i])$, or it is bilinear form of degree $h_{[i]}$ otherwise.  We claim that the resulting bilinear form $(-,-):A\times A\longrightarrow K$ is a graded Nakayama form for $A$, so that $A$ will be graded pseudo-Frobenius. To prove our claim we first check the following two conditions:

\begin{enumerate}
\item[a)] For each $i\in I$, there is a unique $\mu (i)\in I$ such that $(e_iA,Ae_{\mu (i)})\neq 0$;
\item[b)] For each $j\in I$, there is a unique $\mu' (j)\in I$ such that $(e_{\mu'(j)}A,Ae_{j})\neq 0$.
\end{enumerate}
We check a) since b) follows by symmetry. By propositions \ref{prop.uniqueness of Nakayama permutation} and \ref{prop.graded Nak-form via basis}, we know that $e_{[i]}\text{Soc}_{gr}(\Lambda )_he_{\nu([i])}$ is a one-dimensional $K$-vector space whenever $h\in\text{Supp}(e_{[i]}\text{Soc}_{gr}(\Lambda ))$. Moreover, by the $G$-covering condition of the functor $\pi :A\longrightarrow\Lambda$,  we know that we have an isomorphism of $K$-vector spaces $\tilde{\pi}:\oplus_{j\in I,\pi (j)=\nu ([i])}e_iA_he_j\stackrel{\cong}{\longrightarrow} e_{[i]}\Lambda_h e_{\nu ([i])}$. We claim that $\tilde{\pi} (e_i\text{Soc}_{gr}(A)_he_j)\subseteq e_{[i]}\text{Soc}_{gr}(\Lambda )_he_{\nu ([i])}$, for each $j\in I$ such that $\pi (j)=\nu ([i])$. Indeed, if $w\in e_i\text{Soc}_{gr}(A)_he_j=\text{Soc}_{gr}(e_iA)_he_j$, then $w\star 1$ is annihilated on the right by $J\star G=J^{gr}(A\star G)$, so that $w\star 1\in e_i\text{Soc}_{gr}((A\star G)_he_j)$. We now view $w\star 1$ as a morphism $e_j(A\star G)\longrightarrow (A\star G)e_i[h]$ in the category $grproj-A\star G$ of finitely generated projective graded right $A\star G$-module. We then use the fact that $F_\lambda :Gr-A\star G\longrightarrow Gr-\Lambda$ is an equivalence of categories since $\Lambda$ is a weak skeleton of $A\star G$. We then get that $\tilde{\pi} (w\star 1)$ is a morphism $e_{\nu ([i])}\Lambda\longrightarrow e_{[i]}\Lambda$ which is the socle of the category of $grproj -\Lambda$. That is, it is annihilated by any morphism in the radical of this category, which is easily identified with $J^{gr}(\Lambda )$. We then get that $\tilde{\pi}(w\star 1)\in e_{[i]}\text{Soc}_{gr}(\Lambda )e_{\nu ([i])}$, as desired.

The last paragraph shows that we have an induced injective map $\tilde{\pi}:\oplus_{j\in I,\pi (j)=\nu ([i])}e_i\text{Soc}_{gr}(A)_he_j\rightarrowtail e_{[i]}\text{Soc}_{gr}(\Lambda )_he_{\nu ([i])}$, whenever $h\in\text{Supp}(e_{[i]}\text{Soc}_{gr}(\Lambda ))$. We will prove that its domain is nonzero,  which will imply that $\tilde{\pi}$ is bijective since  $\text{dim}(e_{[i]}\text{Soc}_{gr}(\Lambda )_he_{\nu ([i])})=1$ (see Proposition \ref{prop.graded Nak-form via basis}).
It  will follow also that there is exactly one $j\in I$
 such that $e_i\text{Soc}_{gr}(A)_he_j\neq 0$. For our desired proof, we  look at $[i]$ and $\nu ([i])$ as elements of a set of representatives $I_0$ of the $G$-orbits (see the explicit construction of a weak skeleton). The condition that $e_{[i]}\text{Soc}_{gr}(\Lambda )_he_{\nu ([i])}\neq 0$ is then equivalent to the condition that $e_i\text{Soc}_{gr}(A\star G)_he_j\neq 0$, for any $j\in I$ such that $\pi (j)=\nu ([i])$. Take then $0\neq x\in e_{i}\text{Soc}_{gr}(A\star G )_he_{j}$. We then have $x=\sum_{g\in G}x_g\star g$, where $x_g\in e_iA_he_{g(j)}$, for each $g\in G$. Since $x$ is annihilated by $J^{gr}(A\star G)=J\star G$, we immediately get that each $x_g$ is annihilated by $J=J^{gr}(A)$, which implies that $x_g\in e_i\text{Soc}_{gr}(A)_he_{g(j)}$ and, obviously, there is a $g\in G$ such that $x_g\neq 0$.

We now define $h_i:=\tilde{\mathbf{h}}(i)=\mathbf{h} ([i])$, for each $i\in I$, thus obtaining a map $\tilde{h}:I\longrightarrow H$. The last paragraph gives a map $\mu :I\longrightarrow I$ which assigns to $i$ the unique $\mu (i)\in I$ such that $e_i\text{Soc}_{gr}(A)_{h_i}e_{\mu (i)}\neq 0$. Note that, by a symmetric argument, we get a map $\mu':I\longrightarrow I$ which assigns to each $j$ the unique $\mu'(j)\in I$ such that $e_{\mu'(j)}\text{Soc}_{gr}(A)_{h_{\mu'(j)}}e_j\neq 0$. This implies that $\mu$ and $\mu'$ are mutually inverse maps. Note also that, by construction, we have $(ab,c)=(a,bc)$ whenever $a,b,c$ are homogeneous elements in $\bigcup_{i,j\in I}e_iAe_j$. We then have the following chain of double implications:

\begin{center}
$(e_iA,Ae_j)\neq 0$  $\Longleftrightarrow$ there is a $k\in I$ such that $(e_iAe_k,e_kAe_j)\neq 0$ $\Longleftrightarrow$ there are homogeneous elements $a\in e_iAe_k$, $b\in e_kAe_j$ such that $(a,b)\neq 0$ $\Longleftrightarrow$  there are homogeneous elements $a\in e_iAe_k$, $b\in e_kAe_j$ such that $(e_i,ab)\neq 0$ $\Longleftrightarrow$ there is a homogeneous element $w\in e_iAe_j$ such that $0\neq (e_i,w)=<e_{[i]},[w]>$ $\Longleftrightarrow$ there is a homogeneous element $w$ of degree $h_i=\mathbf{h}([i])$ such that $[w]\in e_{[i]}\text{Soc}_{gr}(\Lambda )e_{[j]}$.
\end{center}
We then have $[j]=\nu ([i])$ and the previous paragraphs show that $w\in\text{Soc}_{gr}(A)_{h_i}$ and then necessarily $j=\mu (i)$. Then all conditions of Definition \ref{defi.graded Nakayama form} have been checked for $(-,-):A\times A\longrightarrow K$, except the fact that the induced bilinear form $(-,-):e_iAe_j\times e_jAe_{\mu (i)}\longrightarrow K$ is nondegenerate. But this is  a consequence of the nondegeneracy of $<-,->:e_{[i]}\Lambda e_{[j]}\times e_{[j]}\Lambda e_{\nu ([i])}\longrightarrow K$. Then $(-,-)$ is a graded Nakayama form for $A$.
\end{proof}

 Although the concept of covering of quivers with relations was initially defined for stable translations quivers (see \cite{Ri}, \cite{G2}, \cite{BG}), the most general version can be found in \cite{Gr} and \cite{MVDP}. For us a \emph{covering of a quiver with relations} $F:(Q',\rho ')\longrightarrow (Q,\rho )$ is what is called a morphism of graphs with relations in \cite{Gr}. The important fact for us it that if $F$ is such a covering, then there is a group of automorphisms $G$ of $(Q',\rho ')$ such that if $A'=KQ'/<\rho'>$ and $KQ/<\rho>$ are the associated algebras with enough idempotents, then $G$ is an automorphism of $A'$ which acts freely on objects and we have a covering of  (trivially graded) $K$-categories $A'\stackrel{\pi}{\longrightarrow}A'/G\stackrel{\cong}{\longrightarrow}A$. Moreover, any quiver with relation $(Q,\rho )$ admits a \emph{universal covering} $\tilde{F}:(\tilde{Q},\tilde{\rho})\longrightarrow (Q,\rho )$ in the sense that if $F:(Q',\rho')\longrightarrow (Q,\rho  )$ is any covering of quiver with relations, then there is a unique covering of quiver with relations $F':(\tilde{Q},\tilde{\rho})\longrightarrow (Q',\rho ')$ such that $F\circ F'=\tilde{F}$. In this setting,  it is important to note that if $A=KQ/<\rho >$ is locally finite dimensional, then it is locally bounded
 if, and only if, so is $\tilde{A}=K\tilde{Q}/<\tilde{\rho}>$ (and so is $A'=KQ'/<\rho'>$).

We are now ready to state the following consequence of theorem:

\begin{cor} \label{cor.covering of selfinjective algebra}
Let $(Q,\rho)$ be a quiver with relations such that $A=KQ/<\rho >$ is a self-injective finite dimensional algebra. The following assertions hold:

\begin{enumerate}
\item If $(\tilde{Q},\tilde{\rho})\longrightarrow (Q,\rho )$ is the universal covering, then $\tilde{A}=K\tilde{Q}/<\tilde{\rho}>$ is a pseudo-Frobenius algebra with enough idempotents.
\item If $(Q',\rho')\longrightarrow (Q,\rho)$ is a covering of quivers with relations such that $A'=KQ'/<\rho'>$ is finite dimensional, then $A'$ is a self-injective algebra.
\end{enumerate}
\end{cor}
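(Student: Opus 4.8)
The plan is to deduce both assertions from Theorem \ref{teor.lifting PF in split case}, working throughout with the trivial grading $H=0$. First I would record the relevant structural facts. An algebra of the form $KQ/\langle\rho\rangle$ with $\rho$ a set of admissible relations is split basic, hence weakly basic, and we view it as a trivially graded algebra with enough idempotents (the vertices being the distinguished family); if it is moreover finite dimensional it is automatically locally bounded. Thus $A$, the universal cover $\tilde A=K\tilde Q/\langle\tilde\rho\rangle$ and $A'=KQ'/\langle\rho'\rangle$ are all split weakly basic trivially graded algebras with enough idempotents; $A$ and $A'$ are locally bounded because they are finite dimensional, and $\tilde A$ is locally bounded by the remark preceding the statement, which asserts that the locally bounded condition passes between $A$ and $\tilde A$. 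Finally, the hypothesis that $A$ is self-injective means, by Examples \ref{exem.pseudo-Frobenius}(1), that $A$ is graded Quasi-Frobenius, so in particular a graded pseudo-Frobenius algebra in the sense of Definition \ref{defi.pseudo-Frobenius graded algebras}.

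Second, I would invoke the description of coverings of quivers with relations recalled just before the statement. The universal covering $(\tilde Q,\tilde\rho)\to(Q,\rho)$ produces a group $G$ of automorphisms of $\tilde A$ which acts freely on objects and for which $\tilde A/G\cong A$ as $K$-categories; likewise a covering $(Q',\rho')\to(Q,\rho)$ with $A'$ finite dimensional produces a group $G'$ of automorphisms of $A'$ which acts freely on objects and for which $A'/G'\cong A$. In each case the ambient algebra ($\tilde A$ together with $G$, respectively $A'$ together with $G'$) satisfies the hypotheses of Theorem \ref{teor.lifting PF in split case} by the previous paragraph.

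For assertion (1), Theorem \ref{teor.lifting PF in split case} applied to $\tilde A$ and $G$ gives that $\tilde A$ is pseudo-Frobenius if and only if $\tilde A/G\cong A$ is; since $A$ is pseudo-Frobenius, so is $\tilde A$. For assertion (2), the same theorem applied to $A'$ and $G'$ gives that $A'$ is pseudo-Frobenius if and only if $A'/G'\cong A$ is, whence $A'$ is pseudo-Frobenius; it then only remains to translate this back: $A'$ is finite dimensional, hence (trivially) locally Noetherian, so by Theorem \ref{teor.graded pseudo-Frobenius algebras} its pseudo-Frobenius condition is equivalent to the graded Quasi-Frobenius condition, which for a trivially graded finite dimensional algebra means precisely self-injective, again by Examples \ref{exem.pseudo-Frobenius}(1). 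I do not expect a genuine obstacle here; the only care needed is bookkeeping: checking in the precise sense of Definition \ref{def.basic split graded algebra} that the admissible presentations make $\tilde A$ and $A'$ split weakly basic, confirming that the locally bounded property indeed transfers along the universal cover as asserted, and making sure that the pseudo-Frobenius property, being characterized module-categorically in Theorem \ref{teor.graded pseudo-Frobenius algebras}, is unaffected when the orbit category $\tilde A/G$, respectively $A'/G'$, is replaced by the equivalent algebra $A$ upon quoting Theorem \ref{teor.lifting PF in split case}.
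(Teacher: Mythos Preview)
Your proposal is correct and follows essentially the same route as the paper: both assertions are deduced from Theorem~\ref{teor.lifting PF in split case}, and for assertion~(2) the additional step of upgrading pseudo-Frobenius to self-injective in the finite-dimensional case is handled via the locally Noetherian equivalence (the paper cites Corollary~\ref{cor.Frobenius=simple socle}, you cite the equivalent statement in Theorem~\ref{teor.graded pseudo-Frobenius algebras}). Your extra care in verifying the split weakly basic and locally bounded hypotheses is just an expansion of what the paper leaves implicit.
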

\begin{proof}
Assertion 1 is a direct consequence of theorem \ref{teor.lifting PF in split case} while assertion 2 follows from this same theorem and from  corollary \ref{cor.Frobenius=simple socle}.
\end{proof}

\section{Acknowledgements}

The authors are supported by research projects of
the Spanish Ministry of Education and Science and the Fundación Seneca
of Murcia, with a part of FEDER funds. They thank both institutions for
their support.

\end{document}